\documentclass[12pt]{amsart}
\usepackage{amssymb}

\newtheorem{theorem}{Theorem}[section]
\newtheorem{lemma}[theorem]{Lemma}

\newtheorem{definition}[theorem]{Definition}

\newtheorem{corollary}[theorem]{Corollary}
\newtheorem{proposition}[theorem]{Proposition}

\newtheorem{lem-def}[theorem]{Lemma-Definition}

\newcommand{\hooklongrightarrow}{\lhook\joinrel\longrightarrow}
\renewenvironment{proof}{{\bfseries Proof.}}{\qed}
\topmargin -.4cm
\evensidemargin 1cm
\oddsidemargin 1cm

\newcommand{\N}{\mathbb N}
\newcommand{\Z}{\mathbb Z}
\newcommand{\Q}{\mathbb Q}



\def\op{\operatorname}

\def\aa{\mathcal{A}}

\def\al{\alpha}
\def\as#1{\renewcommand\arraystretch{#1}}

\def\bs{\vskip.5cm}
\def\be{\beta}

\def\cfa{\left(\rho_i\right)_{i\in A}}

\def\com{\op{com}}

\def\cl#1{[#1]_\mu}

\def\dg{\op{Deg}}

\def\diso{\lower.4ex\hbox{$\downarrow$}\raise.4ex\hbox{\mbox{\scriptsize
$\wr$}}}
\def\dm{\Delta}

\def\e{\medskip}

\def\ep#1{\exp(\Pi i#1)}
\def\ep{\epsilon}

\def\erel{e_{\op{rel}}}

\def\g{\Gamma}
\def\ga{\gamma}

\def\gchi{\g_{\mu,\deg(\chi)}}
\def\gen#1{\big\langle\, {#1} \,\big\rangle}

\def\gg{\mathcal{G}}

\def\ggm{\mathcal{G}_\mu}
\def\ggmp{\mathcal{G}_{\mu'}}
\def\ggn{\mathcal{G}_\nu}

\def\ginf{\g_\aa}

\def\gm{\g_\mu}

\def\gmp{\g_{\mu'}}
\def\gn{\g_\nu}

\def\gq{\g_\Q}
\def\gr{\operatorname{gr}}
\def\ggv{\mathcal{G}_v}

\def\imp{\ \Longrightarrow\ }
\def\im{\op{Im}}

\def\inm{\op{in}_\mu}
\def\inn{\op{in}_\nu}
\def\inmp{\op{in}_{\mu'}}
\def\inr{\op{in}_\rho}
\def\iso{\ \lower.3ex\hbox{\as{.08}$\begin{array}{c}\lra\\\mbox{\tiny $\sim\,$}\end{array}$}\ }

\def\ka{\kappa}

\def\km{k_\mu}
\def\kn{k_\nu}
\def\kp{\op{KP}}
\def\kpi{\kp_\infty}
\def\kpm{\op{KP}(\mu)}
\def\kpn{\op{KP}(\nu)}

\def\kpr{\op{KP}(\rho)}

\def\kx{K[x]}

\def\La{\Lambda}

\def\lg{l\raise.6ex\hbox to.2em{\hss.\hss}l}

\def\lra{\,\longrightarrow\,}

\def\mi{m_\infty}

\def\mlv{MacLane-Vaqui\'e\ }

\def\mmu{\mid_\mu}
\def\mn{\op{Min}}

\def\mx{\op{Max}}

\def\orb{\hbox to  .3em{$\backslash$}\backslash}
\def\ord{\op{ord}}
\def\p{\mathfrak{p}}

\def\phmn{\Phi_{\mu,\nu}}

\def\ppa{\mathcal{P}_{\alpha}}
\def\pset{\mathcal{P}}

\def\rhi{\rho_\aa}

\def\sii{\ \Longleftrightarrow\ }

\def\smu{\sim_\mu}

\def\sp{\op{Spec}}

\def\t{\theta}

\newcounter{cs}
\stepcounter{cs}
\newcommand{\casos}{\begin{itemize}}
\newcommand{\fcasos}{\end{itemize}\setcounter{cs}{1}}

\newfont{\tit}{cmr12 scaled \magstep3}

\setlength{\textwidth}{6. in}
\setlength{\textheight}{9 in}

\title[MacLane-Vaqui\'e chains of valuations]{MacLane-Vaqui\'e chains of valuations on a polynomial ring}

\makeatletter
\@namedef{subjclassname@2010}{%
  \textup{2010} Mathematics Subject Classification}
\subjclass[2010]{Primary 13A18; Secondary 12J20}

\author{Enric Nart}
\address{Departament de Matem\`{a}tiques,
         Universitat Aut\`{o}noma de Barcelona,
         Edifici C, E-08193 Bellaterra, Barcelona, Catalonia}
\email{nart@mat.uab.cat}
\thanks{Partially supported by grant MTM2016-75980-P from the Spanish MEC}
\date{}
\keywords{graded algebra, key polynomial, MacLane-Vaqui\'e chain, valuation}

\begin{document}

\begin{abstract}
Let $(K,v)$ be a valued field.
We reinterpretate some results of MacLane and Vaqui\'e on extensions of $v$ to valuations on the polynomial ring $\kx$. We introduce certain  MacLane-Vaqui\'e chains constructed as a mixture of ordinary and limit augmentations of valuations. Every valuation $\nu$ on $\kx$ is a limit (in a certain sense) of a countable \mlv chain. This chain underlying  $\nu$ is essentially unique and contains discrete arithmetic data yielding an explicit description of the graded algebra of $\nu$ as an algebra over the graded algebra of $v$. 
\end{abstract}

\maketitle

\section*{Introduction}
Let $(K,v)$ be a valued field. 
In a pioneering work, S. MacLane studied the
extensions of the valuation $v$ to the polynomial ring $\kx$ in one 
indeterminate, in the case $v$ discrete of rank one \cite{mcla,mclb}. 
MacLane proved that all extensions of $v$ to $\kx$ can be obtained as a limit of chains of augmented valuations:
\begin{equation}\label{depthintro}
\mu_0\ \stackrel{\phi_1,\ga_1}\lra\  \mu_1\ \stackrel{\phi_2,\ga_2}\lra\ \cdots
\ \lra\ \mu_{n-1} 
\ \stackrel{\phi_{n},\ga_{n}}\lra\ \mu_{n}\ \lra\ \cdots
\end{equation}
involving the choice of certain \emph{key polynomials} $\phi_n \in K[x]$ and elements $\ga_n$ belonging to some extension of the value group of $v$.

For valuations of arbitrary rank, different approaches to this problem were developed by V. Alexandru, N. Popescu and A. Zaharescu \cite{APZ}, by F.-V. Kuhlmann \cite{Kuhl}, and by 
F.J. Herrera, W. Mahboub, M.A. Olalla and M. Spivakovsky \cite{hos,inmos}. 

This paper surveys the extension of MacLane's approach to the general case, developed by M. Vaqui\'e \cite{VaqJap,Vaq,Vaq2}. 
In this general context, \emph{limit augmentations} and the corresponding \emph{limit key polynomials} appear as a new feature.

Starting with a valuation $\mu_0$ which admits key polynomials of degree one, we introduce \emph{\mlv chains} as in (\ref{depthintro}), constructed as a mixture of ordinary and limit augmentations satisfying certain technical condition (Definition \ref{defMLV}). 
 
 The main result, Theorem \ref{main}, states that all valuations $\nu$ on $\kx$ extending $v$ fall in one, and only one, of the following cases:\e

(a) \ After a finite number $r$ of augmentation steps, we get $\mu_r=\nu$.\e

(b) \ After a finite number $r$ of augmentation steps, $\nu$ is the stable limit of
some set of ordinary augmentations of $\mu_r$, defined by key polynomials of constant degree.\e

(c) \ It is the stable limit of a countably infinite chain of mixed augmentations as in (\ref{depthintro}), defined by key polynomials with unbounded degree.\e

We say that $\nu$ has \emph{finite depth} $r$, \emph{quasi-finite} depth $r$, or \emph{infinite depth}, respectively.


In section \ref{secGmu}, we use this approach to obtain an explicit description of the structure of the graded algebra of $\nu$ as an algebra over the graded algebra of $v$, in terms of discrete arithmetic data supported by the underlying MacLane-Vaqui\'e chain of $\nu$, which  is essentially unique.



\section{Key polynomials over valued fields}\label{secKP}
Let $(K,v)$ be a non-trivially valued field. Let $k$ be the residue class field, $\g=v(K^*)$ the value group and $\gq=\g\otimes\Q$ the divisible hull of $\g$.

Consider an extension of $v$ to the polynomial ring $\kx$ in one indeterminate. That is, for some embedding $\g\hookrightarrow\Lambda$ into another ordered abelian group, we consider a mapping whose restriction to $K$ is $v$,
$$
\mu\colon \kx\lra \Lambda\infty
$$
and satisfies the following two conditions for all $f,g\in\kx$:
$$\mu(fg)=\mu(f)+\mu(g),\qquad\mu(f+g)\ge\mn\{\mu(f),\mu(g)\}.$$


The \emph{support} of $\mu$ is the prime ideal \ $\p=\p_\mu=\mu^{-1}(\infty)\in\sp(\kx)$. 

The value group of $\mu$ is the subgroup $\gm\subset \Lambda$ generated by $\mu\left(\kx\setminus\p\right)$.



The valuation $\mu$ induces in a natural way a valuation $\bar{\mu}$ on the residue field $\ka(\p)$, the field of fractions of $\kx/\p$. Let $\km$ be the residue class field of $\bar{\mu}$.

Note that $\ka(0)=K(x)$, while $\ka(\p)$ is a simple finite extension of $K$ if $\p\ne0$. 

The extension $\mu/v$ is \emph{commensurable} if $\g_\mu/\g$ is a torsion group. In this case, there is a canonical embedding $\g_\mu\hookrightarrow \gq$.

All valuations with non-trivial support are commensurable over $v$.

\subsection{Graded algebra of a valuation}\label{subsecGr}
For any $\alpha\in\g_\mu$, consider the abelian groups:
$$
\ppa=\{g\in \kx\mid \mu(g)\ge \alpha\}\supset
\ppa^+=\{g\in \kx\mid \mu(g)> \alpha\}.
$$    
The \emph{graded algebra of $\mu$} is the integral domain:
$$
\ggm:=\gr_{\mu}(\kx)=\bigoplus\nolimits_{\alpha\in\g_\mu}\ppa/\ppa^+.
$$

There is a natural \emph{initial term} mapping $\inm\colon \kx\to \ggm$, given by $\inm\p=0$ and 
$$
\inm g= g+\pset_{\mu(g)}^+\in\pset_{\mu(g)}/\pset_{\mu(g)}^+, \qquad\mbox{if }\ g\in \kx\setminus\p.
$$

There is a natural embedding of graded algebras \
$\gg_v:=\op{gr}_v(K)\hookrightarrow \ggm$.

If $\mu$ has non-trivial support $\p\ne0$, there is a natural isomorphism of graded algebras
\begin{equation}\label{isomL}
\ggm\simeq \op{gr}_{\bar{\mu}}(\ka(\p)).
\end{equation}

The next definitions translate properties of the action of  $\mu$ on $\kx$ into algebraic relationships in the graded algebra $\ggm$.

\begin{definition}Let $g,\,h\in \kx$.

We say that $g,h$ are \emph{$\mu$-equivalent}, and we write $g\smu h$, if $\inm g=\inm h$. 

We say that $g$ is \emph{$\mu$-divisible} by $h$, and we write $h\mmu g$, if $\inm h\mid \inm g$ in $\ggm$.

We say that $g$ is $\mu$-irreducible if $(\inm g)\ggm$ is a non-zero prime ideal. 

We say that $g$ is $\mu$-minimal if $g\nmid_\mu f$ for all non-zero $f\in \kx$ with $\deg(f)<\deg(g)$.
\end{definition}

The property of $\mu$-minimality admits a relevant characterization.

\begin{lemma}\cite[Prop. 2.3]{KP}\label{minimal0}
\ Let $\phi\in \kx$ be a non-constant polynomial. Let 
$$
f=\sum\nolimits_{0\le s}a_s\phi^s, \qquad  a_s\in \kx,\quad \deg(a_s)<\deg(\phi) 
$$
be the canonical $\phi$-expansion of $f\in \kx$.
Then, $\phi$ is $\mu$-minimal if and only if
$$
\mu(f)=\min\{\mu(a_s\phi^s)\mid 0\le s\},\quad \forall\,f\in \kx.
$$
\end{lemma}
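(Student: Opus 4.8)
The plan is to record the trivial half of the claimed equality and then prove the two implications. For any $f=\sum_{0\le s}a_s\phi^s$, the ultrametric inequality gives $\mu(f)\ge\min\{\mu(a_s\phi^s)\mid 0\le s\}$ with no hypothesis on $\phi$; the point of the lemma is that $\mu$-minimality of $\phi$ is exactly what forbids a strict inequality here, and conversely. Two facts are used throughout: $\inm$ is multiplicative (a consequence of the axiom $\mu(fg)=\mu(f)+\mu(g)$), and $\ggm$ is an integral domain, so $\inm\phi$ is a non-zero-divisor whenever $\phi\notin\p$.

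\emph{Minimality $\Rightarrow$ equality.} Suppose $\phi$ is $\mu$-minimal but $\mu(f)>\delta:=\min\{\mu(a_s\phi^s)\}$ for some $f$; then $\delta<\infty$ and $\phi\notin\p$ (otherwise $\mu(a_s\phi^s)=\infty$ for $s\ge1$, so $\delta=\mu(a_0)$ would force $\mu(f)=\delta$). Let $S=\{s\mid\mu(a_s\phi^s)=\delta\}$. Passing to the graded piece $\pset_\delta/\pset_\delta^+$ annihilates $f$ (since $\mu(f)>\delta$) as well as every $a_s\phi^s$ with $s\notin S$, so $\sum_{s\in S}\inm(a_s)\,(\inm\phi)^{s}=0$ in $\ggm$; as the summands are nonzero, $|S|\ge2$. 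Put $s_0=\min S$ and isolate its term: the rest is divisible by $(\inm\phi)^{s_0+1}$, hence $\inm(a_{s_0})\,(\inm\phi)^{s_0}\in(\inm\phi)^{s_0+1}\ggm$, and cancelling $(\inm\phi)^{s_0}$ in the domain $\ggm$ gives $\inm\phi\mid\inm(a_{s_0})$, i.e.\ $\phi\mmu a_{s_0}$. Since $a_{s_0}\ne0$ and $\deg(a_{s_0})<\deg(\phi)$, this contradicts the $\mu$-minimality of $\phi$.

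\emph{Equality $\Rightarrow$ minimality.} Assume the equality holds for every $f$, and suppose $\phi$ is not $\mu$-minimal: there is a nonzero $g$ with $\deg(g)<\deg(\phi)$ and $\phi\mmu g$. (If $g\in\p$ one argues separately, e.g.\ by applying the equality to a high enough power of a generator of $\p$; so assume $g\notin\p$.) Then $\inm g=\inm\phi\cdot\eta$ for a nonzero \emph{homogeneous} $\eta\in\ggm$, and since a nonzero homogeneous element is always an initial term we may take $\eta=\inm h$; multiplicativity then gives $\inm g=\inm(\phi h)$, i.e.\ $\mu(g-\phi h)>\mu(g)=\mu(\phi h)$, with $g-\phi h\ne0$ for degree reasons. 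The $\phi$-expansion of $g-\phi h$ is read off from $h=\sum_s b_s\phi^s$: the $0$-th coefficient is $g$ and the $(s+1)$-st is $-b_s$. Applying the equality to $h$ gives $\min_s\mu(b_s\phi^{s+1})=\mu(\phi)+\mu(h)=\mu(\phi h)=\mu(g)$, and then applying it to $g-\phi h$ gives $\mu(g-\phi h)=\min\{\mu(g),\mu(g)\}=\mu(g)$ --- contradicting $\mu(g-\phi h)>\mu(g)$.

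I expect the first implication to be the main obstacle: one must turn the vanishing of a single homogeneous element of $\ggm$ into a genuine $\mu$-divisibility $\phi\mmu a_{s_0}$ by a polynomial of \emph{strictly smaller} degree, and this is precisely where the multiplicativity of $\inm$ and --- crucially --- the integral-domain property of $\ggm$ (to perform the cancellation) are needed. The converse is mostly bookkeeping with $\phi$-expansions once $\phi\mmu g$ has been unwound into $\inm g=\inm(\phi h)$; the one real nuisance is the degenerate case $g\in\p$, which does not touch the substance of the argument.
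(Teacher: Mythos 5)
Both of your core arguments are sound, and since the paper only cites \cite[Prop.~2.3]{KP} for this lemma there is no in-paper proof to compare against. In the direction (minimal $\Rightarrow$ equality), the passage from the vanishing of $\sum_{s\in S}\inm (a_s)\,(\inm\phi)^s$ in a single graded piece to $\phi\mmu a_{s_0}$ is exactly the right use of multiplicativity of $\inm$ plus the domain property of $\ggm$, and your main case of the converse (where the witness $g$ of non-minimality has $\inm g\ne 0$) is complete; note it silently forces $\phi\notin\p$, since $\inm\phi=0$ cannot divide a nonzero element. The genuine gap is the case you defer, $g\in\p$, and it is not the harmless degenerate case you describe: it is exactly where an implicit hypothesis of the statement lives. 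If $\phi$ itself lies in $\p$ and $\p$ is generated by a polynomial $Q$ with $\deg(Q)<\deg(\phi)$, then the displayed equality holds for \emph{every} $f$ (all terms $a_s\phi^s$ with $s\ge1$ have value $\infty$, and $f-a_0\in\phi\kx\subset\p$ gives $\mu(f)=\mu(a_0)$), yet $\phi$ is not $\mu$-minimal, because $\inm\phi=0$ divides $\inm Q=0$. A concrete instance: $K=\Q$ with the $2$-adic valuation, $\mu(f)=v(f(0))$, so $\p=x\kx$, and $\phi=x^2$. Hence in that sub-case no separate argument can exist; the converse implication requires $\phi\notin\p$ (equivalently $\mu(\phi)<\infty$), which has to be read into the statement. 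This costs nothing where the paper actually invokes the lemma, since there $\phi$ is a key polynomial and so $\inm\phi\neq0$.

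Once $\phi\notin\p$ is granted, your deferred case does close, but not as parenthetically as you suggest. The quickest witness is not a power of the generator: write $\phi=Qh+r$ with $\deg(r)<\deg(Q)$; then $r\ne0$ because $Q\nmid\phi$, and $f=\phi-r=Qh$ lies in $\p$, so $\mu(f)=\infty$, while its $\phi$-expansion is $-r+\phi$, whose minimum of values is at most $\mu(\phi)<\infty$; so the equality fails at this single $f$. Your powers-of-$Q$ idea also works, but it needs an actual induction: applying the equality to every $Q^N$ shows (using $\mu(\phi)<\infty$) that all $\phi$-expansion coefficients of $Q^N$ lie in $\p$; dividing them by $Q$ recovers, by uniqueness, the expansion of $Q^{N-1}$, and by induction the expansion of $Q^N$ is the single term $Q^N$, forcing $N\deg(Q)<\deg(\phi)$ for all $N$, which is absurd. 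Please replace the parenthesis by one of these arguments and record the hypothesis $\phi\notin\p$ explicitly.
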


\subsection{Key polynomials}\label{subsecKP}
A (MacLane-Vaqui\'e) \emph{key polynomial} for $\mu$ is a monic polynomial in $\kx$ which is simultaneously  $\mu$-minimal and $\mu$-irreducible. 

The set of key polynomials for $\mu$ is denoted $\kpm$. 

All $\phi\in\kpm$ are irreducible in $\kx$.\e

If $\mu$ has non-trivial support, the isomorphism of (\ref{isomL}) shows that every non-zero homogeneous element of $\ggm$ is a unit. 
Thus, $\kpm=\emptyset$, because no polynomial in $\kx$ can be $\mu$-irreducible.

If $\kpm\ne\emptyset$, the \emph{degree}  $\deg(\mu)$ is the minimal degree of a key polynomial for $\mu$.



\begin{lemma}\cite[Prop. 3.5]{KP}\label{units}
If $\kpm\ne\emptyset$, a non-zero  homogeneous element $\inm f$ is a unit in $\ggm$ if and only if $f\smu a$, for some $a\in\kx$ with $\deg(a)<\deg(\mu)$. In this case, $\inm f$ is algebraic over $\gg_v$ and $\mu(f)$ belongs to $\gq$. 
\end{lemma}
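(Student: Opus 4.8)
The plan is to derive the statement from a structure theorem for $\ggm$ relative to a key polynomial of least degree. Fix $\phi\in\kpm$ with $\deg(\phi)=\deg(\mu)=:m$; such a $\phi$ exists by the definition of $\deg(\mu)$. Since $\kpm\ne\emptyset$ the support $\p_\mu$ is trivial, so $\inm g=0$ forces $g=0$ and $\ggm$ is a domain; and since $\phi$ is $\mu$-irreducible, $\inm\phi$ generates a nonzero prime ideal, hence is a nonzero non-unit. For $0\ne f\in\kx$ with canonical $\phi$-expansion $f=\sum_{s\ge0}a_s\phi^s$, $\deg(a_s)<m$, the $\mu$-minimality of $\phi$ together with Lemma \ref{minimal0} gives $\mu(f)=\min_s\mu(a_s\phi^s)$; consequently $\inm f=\sum_{s\in S}\inm(a_s)(\inm\phi)^s$, where $S=\{s:\mu(a_s\phi^s)=\mu(f)\}$. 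Let $\Delta\subseteq\ggm$ be the graded $\gg_v$-subalgebra generated by all $\inm a$ with $\deg(a)<m$. This expression for $\inm f$ shows that $\ggm$ is generated over $\Delta$ by $\inm\phi$, and the whole lemma will follow once we know that $\Delta$ is a graded field.

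Assume this. Then $\inm\phi$ is transcendental over $\Delta$: an algebraic dependence $\sum_k d_k(\inm\phi)^k=0$ would, after cancelling a suitable power of $\inm\phi$ (legitimate since $\ggm$ is a domain), exhibit a nonzero $d\in\Delta$ with $\inm\phi\mid d$; but a graded field has only units among its nonzero elements, so $\inm\phi$ would be a unit, which it is not. Hence $\ggm=\Delta[\inm\phi]$ is a graded polynomial ring, free over $\Delta$ on the powers $(\inm\phi)^k$, and its units are exactly $\Delta\setminus\{0\}$. If $f\smu a$ with $\deg(a)<m$ then $\inm f=\inm a\in\Delta\setminus\{0\}$ is a unit. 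Conversely, if $\inm f$ is a unit then $\inm f\in\Delta$; since every $\inm(a_s)$ lies in $\Delta$, the expression $\inm f=\sum_{s\in S}\inm(a_s)(\inm\phi)^s$ is the expansion of $\inm f$ on the free $\Delta$-basis $\{(\inm\phi)^k\}_{k\ge0}$, and as $\inm(a_s)\ne0$ whenever $s\in S$, no $s\ge1$ can lie in $S$; thus $S=\{0\}$ and $\inm f=\inm(a_0)$ with $\deg(a_0)<m$, i.e.\ $f\smu a_0$. For the remaining assertions, $\gg_v$ is itself a graded field, $\inm\phi$ is transcendental over $\gg_v$ (again, a non-unit cannot be algebraic over a graded field), and $K(x)$ has transcendence degree one over $K$; hence $\Delta$ is an algebraic extension of $\gg_v$, so $\inm f\in\Delta$ is algebraic over $\gg_v$, and in a homogeneous dependence relation $\sum_i c_i(\inm f)^i=0$ over $\gg_v$ at least two nonzero terms lie in a common graded component, which forces $\mu(f)\in\gq$.

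It remains to prove that $\Delta$ is a graded field, equivalently that $\inm a$ is a unit for every nonzero $a$ with $\deg(a)<m$; this is the only non-formal step, and the main obstacle. One approach is induction on $d:=\deg(a)$, the case $d=0$ being clear since then $\inm a\in\gg_v\setminus\{0\}$. For $0<d<m$ one has $\gcd(a,\phi)=1$, as $\phi$ is irreducible of degree $m>d$, so there are nonzero $u,w\in\kx$ with $ua+w\phi=1$, $\deg(u)<m$ and $\deg(w)<d$; Lemma \ref{minimal0} applied to the already $\phi$-expanded polynomial $1-w\phi$ gives $\mu(ua)=\mu(1-w\phi)=\min\{0,\mu(w\phi)\}$. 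If $\mu(w\phi)<0$ then $\inm(ua)=\inm(-w)\,\inm\phi$, so $\inm\phi$ divides $\inm u\,\inm a$; as $\inm\phi$ is prime this forces $\phi\mmu u$ or $\phi\mmu a$ with both degrees less than $\deg(\phi)$, contradicting the $\mu$-minimality of $\phi$ — so this case cannot occur. If $\mu(w\phi)>0$ then $\inm u\,\inm a=\inm(ua)=1$, so $\inm a$ is a unit. The delicate case is $\mu(w\phi)=0$: then $\inm u\,\inm a=1-\inm w\,\inm\phi$ with $\inm w$ a unit by the inductive hypothesis, so $\inm a$ is invertible only modulo the prime $\inm\phi$, and lifting this to an actual inverse in $\ggm$ requires a closer analysis of $\Delta$ as an algebraic extension of the graded field $\gg_v$ — through its residue layer over $\gg_v$ — which is where the genuine content of the proposition resides.
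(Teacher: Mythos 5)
Your formal reductions are fine: granting that every nonzero homogeneous element $\inm a$ with $\deg(a)<\deg(\mu)$ is a unit (your ``$\Delta$ is a graded field''), the $\phi$-expansion together with Lemma \ref{minimal0} does give $\ggm=\Delta[\inm\phi]$ with $\inm\phi$ transcendental over $\Delta$, and both implications of the equivalence then follow as you say. But that granted statement \emph{is} the substance of the lemma (it is the ``if'' direction applied to $f=a$), and your proof of it has an explicitly acknowledged hole: the case $\mu(w\phi)=0$ in the B\'ezout induction is left open. Worse, the proposed repair --- ``lifting'' the invertibility of $\inm a$ modulo the prime $\inm\phi$ to genuine invertibility --- cannot work as stated: if the claim you are trying to prove were already known, then $\inm u\,\inm a$ would be a unit while $1-\inm w\,\inm\phi$ is visibly a non-unit in $\Delta[\inm\phi]$, so the case $\mu(w\phi)=0$ is in fact vacuous; ruling it out is therefore essentially equivalent to the claim itself and needs a genuinely new input (in the cited source this is where the interplay of $\mu$-minimality, $\mu$-irreducibility and bounds of the type of Theorem \ref{univbound} carries the weight). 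As it stands, the central implication of the lemma is not proved.

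There is a secondary gap in the final assertions. You deduce that $\Delta$ is algebraic over $\gg_v$ from ``$K(x)$ has transcendence degree one over $K$ and $\inm\phi$ is transcendental over $\gg_v$''; this silently invokes a graded Abhyankar-type inequality (that homogeneous elements of $\ggm$ algebraically independent over $\gg_v$ are bounded in number by $\operatorname{trdeg}_K K(x)$), which is true but nontrivial and nowhere justified in your argument. Once algebraicity is available, your homogeneous-component argument for $\mu(f)\in\gq$ is correct. Note also that the paper itself does not reprove this lemma; it quotes it from \cite[Prop.~3.5]{KP}, so the comparison here is with that reference, where precisely the step you omit is the one requiring work.
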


\begin{theorem}\cite[Thm. 3.9]{KP}\label{univbound}
Let $\phi\in\kpm$. For any monic $f\in \kx\setminus K$, we have
$$
\mu(f)/\deg(f)\le \mu(\phi)/\deg(\phi),
$$
and equality holds if and only if $f$ is $\mu$-minimal.
\end{theorem}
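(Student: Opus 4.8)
The plan is to deduce the whole statement from Lemma \ref{minimal0}, exploiting that a key polynomial is $\mu$-minimal, via the elementary device of raising to a power and taking a $\phi$-expansion.

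\textbf{The inequality.} Put $d=\deg(\phi)$, let $f$ be monic of degree $n\ge 1$, and consider $f^d$, which is monic of degree $nd$. Writing its canonical $\phi$-expansion $f^d=\sum_s a_s\phi^s$ with $\deg(a_s)<d$, the degrees $\deg(a_s)+sd$ of the nonzero terms are pairwise distinct, so $nd=\deg(f^d)=\max_s\{\deg(a_s)+sd\}$; since $\deg(a_s)<d$ this forces the top index to be $s=n$ with $a_n$ a nonzero constant, and comparing leading coefficients (both $f$ and $\phi$ are monic) gives $a_n=1$. Then Lemma \ref{minimal0}, applied to the $\mu$-minimal polynomial $\phi$, yields
$$
d\,\mu(f)=\mu(f^d)=\min_s\{\mu(a_s)+s\,\mu(\phi)\}\le \mu(a_n)+n\,\mu(\phi)=n\,\mu(\phi),
$$
which is the asserted bound. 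Applying this to two key polynomials in both orders also shows that the slope $\sigma:=\mu(\phi)/\deg(\phi)$ does not depend on the choice of $\phi\in\kpm$.

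\textbf{Equality for $\mu$-minimal $f$.} I would argue symmetrically: expand $\phi^n$ in powers of $f$, say $\phi^n=\sum_t b_t f^t$ with $\deg(b_t)<n$; the same degree count forces the top term to be $f^d$, i.e. $b_d=1$. If $f$ is $\mu$-minimal, Lemma \ref{minimal0} applied now to $f$ gives $n\,\mu(\phi)=\mu(\phi^n)=\min_t\{\mu(b_t)+t\,\mu(f)\}\le d\,\mu(f)$, hence $\mu(\phi)/d\le\mu(f)/n$; together with the inequality already proved this forces equality.

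\textbf{Equality implies $\mu$-minimality.} This is the delicate direction; I would prove the whole statement by induction on $n=\deg(f)$. Assume $\mu(f)=n\sigma$, and replace $\phi$ by a key polynomial $\psi$ of minimal degree $\deg(\mu)$ (same slope $\sigma$). If $n\ge\deg(\mu)$, take the $\psi$-expansion $f=\sum_{s=0}^m a_s\psi^s$ with $m\ge 1$, $\deg(a_s)<\deg(\mu)$ and $a_m$ monic of degree $r:=n-m\deg(\mu)$. Lemma \ref{minimal0} for $\psi$ gives $n\sigma=\mu(f)=\min_s\{\mu(a_s)+s\,\mu(\psi)\}$, and the term $s=m$ forces $\mu(a_m)\ge r\sigma$, hence $\mu(a_m)=r\sigma$ by the inequality; so $a_m$ is $\mu$-minimal by the inductive hypothesis, and the task reduces to propagating $\mu$-minimality from the leading $\psi$-coefficient to $f$, using the primality of $\inm{\psi}$ to control which initial terms of polynomials of degree $<n$ could possibly be divisible by $\inm{f}$. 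If instead $n<\deg(\mu)$, Lemma \ref{units} forces $\inm{f}$ to be a unit of $\ggm$, algebraic over $\gg_v$, with $\mu(f)\in\gq$, and one must rule out $\mu(f)=n\sigma$ in this range; this amounts to showing $\inm{\psi}$ cannot become a unit, which I would handle by a direct argument in $\ggm$ using again that $\psi$ is $\mu$-irreducible.

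\textbf{Main obstacle.} The first two parts are formal consequences of Lemma \ref{minimal0}; the real difficulty is the converse in the third part. The propagation of $\mu$-minimality from $a_m$ to $f$, and the treatment of the cases in which the relevant initial forms are units of $\ggm$, genuinely use the structure of the graded algebra — the primality of $\inm{\phi}$ and Lemma \ref{units} — rather than any further manipulation of $\phi$-expansions, and I expect that to be the crux of the argument.
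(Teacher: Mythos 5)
Your first two steps are correct and complete: expanding $f^{\deg(\phi)}$ in powers of $\phi$ and $\phi^{\deg(f)}$ in powers of $f$, noting that the degrees of the terms of a canonical expansion are pairwise distinct so that the top coefficient is $1$, and then invoking Lemma \ref{minimal0} for the $\mu$-minimal polynomial ($\phi$ in one direction, $f$ in the other) does yield both the inequality and the implication ``$f$ $\mu$-minimal $\Rightarrow$ equality'', and it also shows the slope $\sigma=\mu(\phi)/\deg(\phi)$ is independent of $\phi\in\kpm$. (For the record, the paper under review does not prove this theorem; it quotes it from [KP, Thm.~3.9], so the only part of your proposal carrying real weight is the third one.)

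The converse, ``equality $\Rightarrow$ $\mu$-minimality'', is where the genuine gap lies, and the induction you set up does not close. After passing to $\psi\in\kpm$ of minimal degree, you apply the inductive hypothesis to the leading coefficient $a_m$, which is monic of degree $r<\deg(\mu)$. But for $1\le r<\deg(\mu)$ no polynomial is $\mu$-minimal: by Lemma \ref{units} its initial form is a unit of $\ggm$, hence it $\mu$-divides every nonzero constant, which has smaller degree. So at those degrees the statement you are inducting on is really the assertion that the equality $\mu(a_m)=r\sigma$ cannot occur at all --- precisely the case $n<\deg(\mu)$ you defer at the end. Moreover, your plan for that case (``showing $\inm\psi$ cannot become a unit'') misidentifies the difficulty: $\inm\psi$ is a non-unit by the very definition of a key polynomial; what is needed is a concrete contradiction (e.g.\ that the prime $\inm\psi$ would divide a unit, or an incompatibility of values/degrees extracted from $\psi$-expansions), and no such argument is given. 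Likewise, the ``propagation of $\mu$-minimality from $a_m$ to $f$'' --- i.e.\ that a monic $f$ with $\deg(f)=m\deg(\mu)$ whose $\psi$-expansion attains its minimum at the top term is $\mu$-minimal --- is exactly the substantive half of the theorem; it requires a genuine analysis of $\phi$-expansions (or of the residual polynomial operator, as in Theorem \ref{charKP}) controlling how $\inm f$ can divide initial forms of lower-degree polynomials, and in your write-up it appears only as a description of the task. As it stands, the inequality and one implication are proved, but the direction ``equality implies $\mu$-minimality'' is not.
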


For any positive integer $m$, consider the subset 
$$ \g_{\mu,m}=\left\{\mu(a)\mid 0\le \deg(a)<m\right\}\subset\gm.$$
For all $\chi\in\kpm$, the subset $\gchi$ is a subgroup of $\gm$ and $\gen{\gchi,\mu(\chi)}=\gm$.

\begin{definition}\label{defErel}
If $\kpm\ne\emptyset$, the \emph{relative ramification index} of $\mu$ is defined as:
$$
e:=\erel(\mu):=\left(\gm\colon \g_{\mu,\deg(\mu)}\right).
$$
\end{definition}

By Lemma \ref{units}, $\g_{\mu,\deg(\mu)}\subset\gq$. Hence, if $\mu/v$ is incommensurable, we have $e=\infty$.

If $\mu/v$ is commensurable, $e$ is the least positive integer such that $e\mu(\phi)\in\g_{\mu,\deg(\mu)}$, where $\phi$ is any key polynomial for $\mu$ of minimal degree.

\begin{definition}\label{defproper}
A key polynomial $\chi$ for $\mu$ is said to be \emph{proper} if there exists some $\phi\in\kpm$ of minimal degree such that $\chi\not\smu \phi$. 
\end{definition}

For $\chi\in\kpm$, denote by $\cl{\chi}\subset \kpm$ the subset of all key polynomials which are $\mu$-equivalent to $\chi$.

Since properness depends only on the class $[\chi]_\mu$, we may speak of \emph{proper} and \emph{improper} classes of $\mu$-equivalence of key polynomials for $\mu$. 

If $e=1$, all classes are proper. If $e>1$,  all key polynomials for $\mu$ of minimal degree are $\mu$-equivalent and form the unique improper class in $\kpm/\!\!\smu$ \cite[Cor. 6.5]{KP}.

\begin{lemma}\cite[Cor. 6.4]{KP}\label{groupchain0}
If  $\chi$ is a proper key polynomial  for $\mu$, then $\gchi=\gm$.
\end{lemma}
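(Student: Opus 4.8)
The plan is to reduce the statement to the single assertion $\mu(\chi)\in\g_{\mu,\deg(\chi)}$. Granting it, we are done: we already know that $\g_{\mu,\deg(\chi)}$ is a subgroup of $\g_\mu$ satisfying $\gen{\g_{\mu,\deg(\chi)},\mu(\chi)}=\g_\mu$, so $\mu(\chi)\in\g_{\mu,\deg(\chi)}$ forces $\g_{\mu,\deg(\chi)}=\g_\mu$. First I would dispose of the case $\erel(\mu)=1$, which by Definition \ref{defErel} means precisely $\g_{\mu,\deg(\mu)}=\g_\mu$; since $\deg(\chi)\ge\deg(\mu)$ we then get $\g_\mu=\g_{\mu,\deg(\mu)}\subseteq\g_{\mu,\deg(\chi)}\subseteq\g_\mu$. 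When $\erel(\mu)>1$, the dichotomy recalled just before the lemma tells us that all key polynomials for $\mu$ of minimal degree make up a single improper class, so the properness of $\chi$ forces $\deg(\chi)>\deg(\mu)$; this is the inequality on which the rest of the argument rests.

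So assume $\deg(\chi)>\deg(\mu)$, fix a key polynomial $\phi$ for $\mu$ of minimal degree, and write the canonical $\phi$-expansion $\chi=\sum_{s=0}^{n}a_s\phi^s$ with $\deg(a_s)<\deg(\phi)$ and $n=\lfloor\deg(\chi)/\deg(\phi)\rfloor\ge 1$. Since $\phi$ is $\mu$-minimal, Lemma \ref{minimal0} gives $\mu(\chi)=\min_{0\le s\le n}\mu(a_s\phi^s)$. If this minimum is attained at some $s_0<n$, then $\deg(a_{s_0}\phi^{s_0})<(s_0+1)\deg(\phi)\le n\deg(\phi)\le\deg(\chi)$, whence $\mu(\chi)=\mu(a_{s_0}\phi^{s_0})\in\g_{\mu,\deg(\chi)}$ and we are finished. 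Otherwise the minimum is attained only at $s=n$, so $\mu(\chi)=\mu(a_n)+n\mu(\phi)$; and if in addition $\deg(\phi)\nmid\deg(\chi)$, then $n\deg(\phi)<\deg(\chi)$, so $n\mu(\phi)=\mu(\phi^n)\in\g_{\mu,\deg(\chi)}$, and combining with $\mu(a_n)\in\g_{\mu,\deg(\chi)}$ (since $\deg(a_n)<\deg(\phi)<\deg(\chi)$) and the subgroup property we again obtain $\mu(\chi)\in\g_{\mu,\deg(\chi)}$.

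The only remaining possibility, and the real heart of the matter, is $\deg(\phi)\mid\deg(\chi)$: here $a_n=1$, $n\ge 2$, and, the minimum being attained solely at $s=n$, we get $\mu(\chi-\phi^n)>\mu(\chi)$, i.e.\ $\inm\chi=(\inm\phi)^n$ in $\ggm$. But $\chi$ is $\mu$-irreducible, so $(\inm\chi)\ggm$ is a non-zero prime ideal of the integral domain $\ggm$, whereas $\inm\phi$ is a non-zero non-unit (being $\mu$-irreducible it generates a proper non-zero ideal), and for such an element $(\inm\phi)^n\ggm$ cannot be prime once $n\ge 2$. This contradiction rules out the last possibility and completes the proof. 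The step I expect to demand the most care is the book-keeping of which monomial $a_s\phi^s$ actually realises $\mu(\chi)$, since that is what decides whether we land directly inside $\g_{\mu,\deg(\chi)}$ or are pushed into the irreducibility contradiction; the rest is either elementary degree counting or the short graded-ring computation above.
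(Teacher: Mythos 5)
Your argument is correct, and it is necessarily a different route from the paper's, because the paper gives no proof at all: Lemma \ref{groupchain0} is simply quoted from \cite[Cor.~6.4]{KP}. Your reduction to showing $\mu(\chi)\in\gchi$ is valid (the paper records that $\gchi$ is a subgroup of $\gm$ with $\gen{\gchi,\mu(\chi)}=\gm$), the $\phi$-expansion bookkeeping via Lemma \ref{minimal0} is sound (in the non-divisible case both $\mu(a_n)$ and $n\mu(\phi)$ lie in $\gchi$, and the subgroup property gives their sum), and the final computation is airtight: in the integral domain $\ggm$ the element $\inm\phi$ is a nonzero non-unit (it generates a nonzero prime, hence proper, ideal), so $(\inm\phi)^n\ggm$ with $n\ge 2$ cannot be prime, contradicting the $\mu$-irreducibility of $\chi$ when $\inm\chi=(\inm\phi)^n$.

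The one point worth flagging is how you dispose of the case $\deg(\chi)=\deg(\mu)$: you invoke $\erel(\mu)=1$ or the dichotomy on improper classes quoted just before the lemma, i.e.\ \cite[Cor.~6.5]{KP}. Within this survey that fact is indeed stated earlier, so your proof is formally fine, but in the source \cite{KP} that corollary comes after (and is naturally a consequence of) the very statement you are proving, so the step risks circularity if one wants a genuinely independent argument. It is easy to avoid: if $\deg(\chi)=\deg(\mu)$ and $\phi$ is a minimal-degree key polynomial with $\chi\not\smu\phi$, then Theorem \ref{univbound} (both are monic and $\mu$-minimal of the same degree) gives $\mu(\chi)=\mu(\phi)$; since $\inm\chi\ne\inm\phi$, we must have $\mu(\chi-\phi)=\mu(\chi)$, and $\deg(\chi-\phi)<\deg(\chi)$ puts $\mu(\chi)$ in $\gchi$ directly, after which your reduction finishes the proof. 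With that substitution, your proof is a clean, self-contained elementary argument resting only on Lemma \ref{minimal0}, Theorem \ref{univbound} and the subgroup property of $\gchi$.
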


Consider the subring  of homogeneous elements of degree zero in the graded algebra
$$\Delta=\Delta_\mu=\pset_0/\pset_0^+\subset\ggm.$$ 
There are canonical injective ring homomorphisms $\ k\hookrightarrow\Delta\hookrightarrow \km$. 
 We denote  the algebraic closure of $k$ in $\Delta$ by  $$\kappa=\kappa(\mu)\subset\Delta.$$ This subfield satisfies $\kappa^*=\Delta^*$, where $\Delta^*$ is the multiplicative group of all units in $\Delta$.

The following results compute the ring $\Delta$ in terms of the presence (or absence) of key polynomials and the commensurability of $\mu$.

\begin{theorem}\cite[Thm. 4.4]{KP}\label{empty}
The set $\kpm$ is empty if and only if all homogeneus elements in $\ggm$ are units. Equivalently, $\mu/v$ is commensurable and $\ka=\dm=\km$ is an algebraic extension of $k$.
\end{theorem}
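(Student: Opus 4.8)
The plan is to establish the two asserted equivalences in turn; the first --- $\kpm=\emptyset$ if and only if every nonzero homogeneous element of $\ggm$ is a unit --- is the core. One implication is immediate: a unit generates the whole ring, which is never a nonzero prime ideal, so if all nonzero homogeneous elements are units, no polynomial is $\mu$-irreducible and $\kpm=\emptyset$. For the converse I argue contrapositively: assuming some nonzero homogeneous element of $\ggm$ is \emph{not} a unit, I construct a key polynomial. Every nonzero homogeneous element is $\inm g$ for some $g\in\kx\setminus\p$, and replacing $g$ by $g/\lc(g)$ does not change whether $\inm g$ is a unit (the initial form of $\lc(g)\in K^*$ is a unit of $\gg_v\subseteq\ggm$); so there is a monic $f$ with $\inm f$ a non-unit, and I choose $\phi$ monic of least degree among such. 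Necessarily $\p=0$, since for $\p\ne0$ the isomorphism (\ref{isomL}) forces every nonzero homogeneous element to be a unit. By minimality of $\deg(\phi)$, every nonzero $a\in\kx$ with $\deg(a)<\deg(\phi)$ has $\inm a$ a unit.

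I then check $\phi\in\kpm$. For $\mu$-minimality I use Lemma \ref{minimal0}: in a $\phi$-expansion $f=\sum_s a_s\phi^s$ the homogeneous terms $(\inm a_s)(\inm\phi)^s$ of lowest value cannot cancel, for if they summed to zero, dividing by $(\inm\phi)^{s_1}$ (with $s_1$ their least index; legitimate as $\ggm$ is a domain) would place the unit $\inm a_{s_1}$ in the ideal $(\inm\phi)\ggm$, making $\inm\phi$ a unit. So $\mu(f)=\min_s\mu(a_s\phi^s)$, and writing $s_1$ for the least minimizing index, $(\inm\phi)^{s_1}$ exactly divides $\inm f$; hence $\inm\phi\mid\inm f$ precisely when the $\phi$-free term $a_0$ is not value-minimizing. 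For $\mu$-irreducibility, $(\inm\phi)\ggm$ is nonzero; if $\inm\phi\mid\inm g\cdot\inm h=\inm(gh)$ with $g,h\notin\p$ but $\inm\phi$ divides neither $\inm g$ nor $\inm h$, let $b_0,c_0$ be the $\phi$-free terms of $g,h$, so $b_0,c_0$ are nonzero with $\mu(b_0)=\mu(g)$ and $\mu(c_0)=\mu(h)$; writing $b_0c_0=e\phi+r$ with $\deg(r)<\deg(\phi)$, the divisibility criterion applied to $gh$ (whose $\phi$-free term is $r$) together with $\mu$-minimality applied to $b_0c_0$ yields $\inm(b_0c_0)=\inm(e\phi)$, i.e.\ $\inm\phi\mid\inm b_0\cdot\inm c_0$; but $\inm b_0,\inm c_0$ are units, so $\inm\phi$ would be a unit --- contradiction. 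Hence $(\inm\phi)\ggm$ is prime and $\phi\in\kpm$. I expect this $\mu$-irreducibility step to be the main obstacle: it needs careful $\phi$-adic bookkeeping, every sub-case terminating in a unit lying inside $(\inm\phi)\ggm$.

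For the second equivalence assume $\kpm=\emptyset$, equivalently (first part) that every nonzero homogeneous element of $\ggm$ is a unit. Then $\Delta$ is a field --- a nonzero element of $\Delta$ is a unit of $\ggm$ whose inverse is again homogeneous of degree zero --- so $\Delta^*=\Delta\setminus\{0\}$, whence $\kappa=\Delta$ because $\kappa^*=\Delta^*$. Next $\Delta=\km$: for $\p\ne0$ this is the degree-zero part of the graded isomorphism (\ref{isomL}); for $\p=0$, any element of $\km$ is the degree-zero part of $\inm f\cdot(\inm g)^{-1}$, for suitable $f,g\in\kx$ with $\mu(f)=\mu(g)$, computed in the graded field $\gr_\mu(K(x))\supseteq\ggm$, and since $\inm g$ is a unit of $\ggm$ this element already lies in $\ggm$, hence in $\Delta$; as $\Delta\subseteq\km$ always holds, $\Delta=\km$. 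So $\km=\Delta=\kappa$ is algebraic over $k$ by the definition of $\kappa$. Finally $\mu/v$ is commensurable: otherwise choose $\phi$ monic of least degree with $\mu(\phi)\notin\gq$; then $\phi$ is $\mu$-minimal (the terms of any $\phi$-expansion have values in distinct cosets of $\gq$, as no nonzero multiple of $\mu(\phi)$ lies in $\gq$), and if $\inm\phi\cdot\inm g=1$ the minimizing term of the $\phi$-expansion of $g$ would have coefficient $c_{s_1}$ with $\mu(c_{s_1})=-(s_1+1)\mu(\phi)\notin\gq$, contradicting $\deg(c_{s_1})<\deg(\phi)$; so $\inm\phi$ is a non-unit and $\kpm\ne\emptyset$ by the first part --- a contradiction.

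Conversely, suppose $\mu/v$ is commensurable and $\kappa=\Delta=\km$ is algebraic over $k$. A nonzero homogeneous element is $\inm f$ with $\mu(f)\in\gm\subseteq\gq$, so $n\,\mu(f)=v(c)$ for some integer $n\ge1$ and some $c\in K^*$; then $\inm(f^n)\cdot\inm(c)^{-1}$ is a nonzero element of the field $\Delta=\km$, hence a unit of $\ggm$, so $(\inm f)^n=\inm(c)\cdot\big(\inm(f^n)\inm(c)^{-1}\big)$ --- and therefore $\inm f$ --- is a unit. Thus every nonzero homogeneous element of $\ggm$ is a unit, which by the first part also gives $\kpm=\emptyset$; together with the forward direction this yields all the stated equivalences.
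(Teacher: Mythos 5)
The paper gives no proof of this statement at all --- it is quoted from \cite[Thm. 4.4]{KP} --- so there is nothing internal to compare your argument against; judged on its own, your proof is correct. The core construction (take a monic $\phi$ of minimal degree whose initial form is not a unit, then verify $\mu$-minimality and $\mu$-irreducibility through $\phi$-expansions, with every failure forcing a unit into the proper ideal $(\inm\phi)\ggm$) is sound, and both halves of the second equivalence go through as you write them. Three points you leave implicit, none of which I would call gaps: primality of the homogeneous ideal $(\inm\phi)\ggm$ only needs to be tested on homogeneous elements, which is the standard reduction you are tacitly using when you restrict to $\inm g\cdot\inm h$; in the incommensurability argument you should observe (as stated in Section 1) that a valuation with nontrivial support is commensurable over $v$, so under your assumption $\p=0$ and $\mu(\phi)$ is finite --- otherwise the monic polynomial of least degree with value outside $\gq$ could a priori lie in the support, $\inm\phi$ would vanish, and no contradiction with ``all nonzero homogeneous elements are units'' would arise; and in the irreducibility step the degenerate case $e=0$ (i.e.\ $b_0c_0=r$) is excluded at once, since then $\mu(r)=\mu(b_0c_0)=\mu(gh)$, contradicting the divisibility criterion applied to $gh$. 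With those remarks added, your argument is a complete, self-contained proof along the same natural lines as the cited source.
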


These valuations with $\kpm=\emptyset$ are said to be \emph{valuation-algebraic} \cite{Kuhl}. 
The valuations  with $\kpm\ne\emptyset$ are \emph{valuation-transcendental}. They split into two families.

\begin{theorem}\cite[Thm. 4.2]{KP}\label{incomm}
Suppose $\mu/v$ incommensurable. Let $\phi\in\kx$ be a monic polynomial of minimal degree satisfying $\mu(\phi)\not\in\gq$. Then,
$\phi$ is a key polynomial for $\mu$, and $\kpm=\cl{\phi}$.
In particular, all key polynomials for $\mu$ are improper.

In this case, $\ka=\dm=\km$ is a finite extension of $k$.
\end{theorem}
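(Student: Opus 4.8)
The plan is to push everything through $\phi$-expansions, with $\phi$ as in the statement, exploiting that $\mu(\phi)$ has infinite order modulo $\gq$. Since $\mu/v$ is incommensurable its support is trivial, $\p=0$, so $\phi\notin\p$; put $e=\deg(\phi)\ge 1$. If $n\mu(\phi)\in\gq$ for some nonzero integer $n$, divisibility of $\gq$ inside the torsion-free group $\Lambda$ would give $\mu(\phi)\in\gq$; hence $\mu(\phi)$ has infinite order modulo $\gq$, and by minimality of $e$ every nonzero $a\in\kx$ with $\deg(a)<e$ satisfies $\mu(a)\in\gq$. Consequently, in the canonical $\phi$-expansion $f=\sum_s a_s\phi^s$ the values $\mu(a_s\phi^s)=\mu(a_s)+s\mu(\phi)$ of the nonzero terms are pairwise distinct (equality of two of them would put a nonzero multiple of $\mu(\phi)$ into $\gq$), so $\mu(f)=\min_s\mu(a_s\phi^s)$ is attained at a unique index $s_0=s_0(f)$ and $\inm f=\inm(a_{s_0})(\inm\phi)^{s_0}$. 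In particular $\phi$ is $\mu$-minimal by Lemma~\ref{minimal0}.

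Next I would prove $\phi$ is $\mu$-irreducible, i.e.\ that $(\inm\phi)\ggm$ is a nonzero prime ideal (nonzero since $\phi\notin\p$). Every nonzero homogeneous element of $\ggm$ is $\inm h$ with $\mu(h)$ equal to its degree, hence, by the previous step, of the form $\inm(d)(\inm\phi)^r$ with $\deg(d)<e$ and $r\ge 0$; a degree comparison shows $\inm\phi\mid\inm(d)(\inm\phi)^r$ only if $r\ge 1$ (if the quotient is $\inm(d')(\inm\phi)^{r'}$ then $(r-r'-1)\mu(\phi)=\mu(d')-\mu(d)\in\gq$, forcing $r=r'+1$), and with $d=1$, $r=0$ this shows $\inm\phi$ is not a unit. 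Now if $\phi\mmu fg$, then $\inm\phi$ divides $\inm(fg)=\inm(a_{s_0})\inm(b_{t_0})(\inm\phi)^{s_0+t_0}$ (with $s_0$, $t_0$ the indices of $f$, $g$); writing the product $\inm(a_{s_0})\inm(b_{t_0})$ in the form $\inm(c_{u_0})(\inm\phi)^{u_0}$ forces $u_0=0$, since its degree lies in $\gq$, so $\inm\phi\mid\inm(c_0)(\inm\phi)^{s_0+t_0}$ with $\deg(c_0)<e$, and the divisibility criterion gives $s_0+t_0\ge 1$, whence $\phi\mmu f$ or $\phi\mmu g$. Thus $\phi\in\kpm$; in particular $\kpm\ne\emptyset$ and $\deg(\mu)\le e$.

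To get $\kpm=\cl\phi$, let $\psi\in\kpm$. By Theorem~\ref{univbound} applied with $\phi$ and with $\psi$ one has $\deg(\phi)\cdot\mu(\psi)=\deg(\psi)\cdot\mu(\phi)$; since $\psi$ is monic, $\deg(\psi)<e$ would give $\mu(\psi)\in\gq$ and thus a nonzero multiple of $\mu(\phi)$ in $\gq$, which is impossible, so $\deg(\psi)\ge e$ and therefore $\deg(\mu)=e$; the same identity forces $\mu(\psi)\notin\gq$, and in fact $\mu(\psi)=\mu(\phi)$ once $\deg(\psi)=e$. In the $\phi$-expansion $\psi=\sum_s a_s\phi^s$, Lemma~\ref{units} shows $\inm(a_{s_0})$ is a unit (as $\deg(a_{s_0})<e=\deg(\mu)$), so $\inm\psi=u(\inm\phi)^{s_0}$ for a unit $u$; since $\inm\psi$ is prime this forces $s_0=1$ and $(\inm\psi)\ggm=(\inm\phi)\ggm$, hence $\psi\mmu\phi$; if $\deg(\psi)>e$ this contradicts the $\mu$-minimality of $\psi$, so $\deg(\psi)=e$. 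Comparing leading coefficients in $\psi=a_0+a_1\phi$ gives $a_1=1$; since $\mu(a_0)\in\gq$ while $\mu(\psi)=\mu(\phi)\notin\gq$ we get $\mu(a_0)>\mu(\phi)$ and therefore $\inm\psi=\inm\phi$, i.e.\ $\psi\smu\phi$. Thus $\kpm=\cl\phi$, and as all key polynomials lie in a single class of $\smu$, none is proper.

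Finally, $\p=0$ means $\bar\mu$ is the valuation $\mu$ on $K(x)$, so $\km=\oo_\mu/\m_\mu$ is the degree-zero part of $\gr_\mu(K(x))$, into which $\dm$ embeds as the degree-zero part of $\ggm=\gr_\mu(\kx)$. Every nonzero homogeneous element of $\gr_\mu(K(x))$ equals $\inm(f)\inm(g)^{-1}$ for some $f,g\in\kx$, which, by the $\phi$-expansion description and since $\inm(a_{s_0}),\inm(b_{t_0})$ are units (Lemma~\ref{units}), equals $\inm(a_{s_0})\inm(b_{t_0})^{-1}(\inm\phi)^{s_0-t_0}$; if it has degree $0$ then $s_0=t_0$, because $\mu(a_{s_0}),\mu(b_{t_0})\in\gq$ and $\mu(\phi)\notin\gq$, so it lies in $\ggm$. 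Hence $\km=\dm$, which is therefore a field; combined with $\ka^*=\dm^*$ and $\ka\subseteq\dm$ this gives $\ka=\dm=\km$. For finiteness, identify $\dm$ with $W_0/W_0^+$, where $W_0=\{d\in\kx:\deg(d)<e,\ \mu(d)\ge 0\}$ and $W_0^+=\{d\in W_0:\mu(d)>0\}$ are $\oo_v$-modules with $\m_v W_0\subseteq W_0^+$; any $e+1$ elements of the $e$-dimensional $K$-space $\{d\in\kx:\deg(d)<e\}$ are $K$-linearly dependent, and scaling such a relation so that its coefficients lie in $\oo_v$ with at least one a unit and reducing modulo $W_0^+$ shows that their images in $W_0/W_0^+$ are $k$-linearly dependent; hence $[\km:k]=\dim_k\dm\le e=\deg(\phi)<\infty$. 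I expect the only real bookkeeping to be the divisibility criterion in $\ggm$ behind $\mu$-irreducibility and the identification of $\km$ with the degree-zero part of $\gr_\mu(K(x))$; the finiteness is then the short linear-algebra argument above.
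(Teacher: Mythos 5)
Your proof is correct and follows essentially the same route as the source the paper cites for this statement (\cite[Thm.\ 4.2]{KP}), which this survey does not reprove: incommensurability of $\mu(\phi)$ forces a unique minimal term in every $\phi$-expansion, and from this the $\mu$-minimality and $\mu$-irreducibility of $\phi$, the equality $\kpm=\cl{\phi}$, and the computation $\ka=\dm=\km$ with $[\km:k]\le\deg(\phi)$ all follow as you argue. The only step left tacit is the routine reduction of primality of the ideal $(\inm\phi)\ggm$ to homogeneous elements (the ideal is homogeneous and the grading group is totally ordered, so it suffices that $\inm\phi$ does not divide a product of two homogeneous nondivisible elements), which is standard and does not affect the argument.
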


\begin{theorem}\cite[Thms. 4.5,4.6]{KP}\label{comm}
Suppose $\mu/v$ commensurable and $\kpm\ne\emptyset$.  
Let $\phi$ be a key polynomial for $\mu$ of minimal degree $m$. Let $e=\erel(\mu)$. 

Let $u=\inm a\in\ggm^*$, for some $a\in\kx$ such that $\deg(a)<m$ and $\mu(a)=e\mu(\phi)$. Then, 
$\xi=(\inm \phi)^eu^{-1}\in\Delta$
is transcendental over $k$ and satisfies $\Delta=\kappa[\xi]$.

Moreover, the canonical embedding $\Delta\hookrightarrow \km$ induces an isomorphism $\ka(\xi)\simeq \km$. 
\end{theorem}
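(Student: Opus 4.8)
The plan is to work inside the graded algebra $\ggm$ and exploit the structure results already available: Lemma \ref{units} (classification of homogeneous units), Lemma \ref{minimal0} (additivity of $\mu$ along the $\phi$-expansion for $\mu$-minimal $\phi$), and the fact that $\langle\gchi,\mu(\chi)\rangle=\gm$ together with the definition of $e=\erel(\mu)$. First I would fix a key polynomial $\phi$ of minimal degree $m$ and note that, since $\mu/v$ is commensurable, $e<\infty$ and $e\mu(\phi)\in\g_{\mu,m}$ by the characterization of $e$ recalled just before Definition \ref{defproper}; hence there is $a\in\kx$ with $\deg(a)<m$ and $\mu(a)=e\mu(\phi)$, and $u=\inm a$ is a unit in $\ggm^*$ by Lemma \ref{units}. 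Then $\xi=(\inm\phi)^e u^{-1}$ is a well-defined homogeneous element of degree $e\mu(\phi)-\mu(a)=0$, so $\xi\in\Delta$.

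The core of the argument is that $\xi$ is transcendental over $k$ and that $\Delta=\kappa[\xi]$. For transcendence, suppose $\sum_{j=0}^{d} c_j\xi^j=0$ in $\Delta$ with $c_j\in\kappa\subset\Delta$, not all zero; clearing the denominator $u^{-d}$ gives a homogeneous relation $\sum_j \tilde c_j (\inm\phi)^{je}=0$ in $\ggm$, where $\tilde c_j=c_j u^{d-j}$ has the form $\inm a_j$ for some $a_j\in\kx$ with $\deg(a_j)<m$. Lifting, this says $\mu$ is not additive on the $\phi$-expansion $\sum_j a_j\phi^{je}$ — more precisely, two distinct terms $a_j\phi^{je}$ of the same value $\mu$ have initial terms summing to zero — which contradicts $\mu$-minimality of $\phi$ via Lemma \ref{minimal0} (the terms $a_j\phi^{je}$ sit in pairwise distinct values $je\mu(\phi)+\mu(a_j)$ modulo $\gm$, or one checks directly that distinct exponents $j$ force distinct values so no cancellation is possible). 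For $\Delta=\kappa[\xi]$: take any homogeneous $\inm g\in\Delta$ of degree $0$; expand $g=\sum_s a_s\phi^s$ with $\deg(a_s)<m$; by Lemma \ref{minimal0}, $\mu(g)=\min_s\mu(a_s\phi^s)$ and only the terms attaining the minimum contribute to $\inm g$. Since $\mu(g)=0$, each contributing $s$ satisfies $s\mu(\phi)=-\mu(a_s)\in\g_{\mu,m}$, forcing $e\mid s$, say $s=je$; then $\inm(a_s\phi^s)=(\inm a_s)\, u^{j}\,\xi^{j}$, and $(\inm a_s)u^j$ is a homogeneous unit of degree $0$, hence in $\Delta^*=\kappa^*$. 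Summing over the contributing indices realizes $\inm g$ as an element of $\kappa[\xi]$; transcendence of $\xi$ plus the argument above show the representation has no cancellation, so $\Delta=\kappa[\xi]$ with $\xi$ transcendental.

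Finally, for the last assertion I would identify $\km$ with $\op{gr}_{\bar\mu}$-degree-zero data: $\Delta\hookrightarrow\km$ is the canonical embedding of degree-zero parts, and $\km$ is generated over $\Delta$ by the image of $\inm\phi$ suitably normalized — but $(\inm\phi)^e=u\xi$ already lies in $\Delta$, and more generally any element of $\km$ is represented by a homogeneous element of $\ggm$ of the form $\inm f/\inm g$ with $\mu(f)=\mu(g)$; writing numerator and denominator via their $\phi$-expansions as above shows this ratio lies in the fraction field $\ka(\xi)$ of $\Delta=\kappa[\xi]$ over $\kappa$. Conversely $\ka(\xi)\hookrightarrow\km$ is injective since $\Delta\hookrightarrow\km$ is. Hence $\ka(\xi)\xrightarrow{\ \sim\ }\km$.

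The main obstacle I anticipate is the transcendence/no-cancellation bookkeeping: one must check carefully that in a $\phi$-expansion the terms $a_s\phi^s$ with $e\mid s$ and $\mu(a_s\phi^s)=0$ have initial terms that are $k$-linearly independent after dividing by the appropriate power of $u$, i.e. that distinct such $s$ give genuinely distinct monomials in $\xi$ with unit coefficients in $\kappa$ that cannot sum to zero — this is exactly where $\mu$-minimality of $\phi$ (Lemma \ref{minimal0}) must be invoked with care, rather than merely the weaker statement $\gen{\gchi,\mu(\phi)}=\gm$.
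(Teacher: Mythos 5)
The paper itself gives no proof of Theorem \ref{comm}: it is imported verbatim from \cite[Thms.\ 4.5, 4.6]{KP}, so there is no internal argument to compare against. Judged on its own, your plan reconstructs what is essentially the standard MacLane--Vaqui\'e argument and is sound. The existence of $a$ and the fact that $\xi\in\Delta$ are fine; the transcendence step is correctly organized (clear denominators with $u^d$, use Lemma \ref{units} to write each nonzero coefficient $\tilde c_j=c_ju^{d-j}$ as $\inm b_j$ with $\deg(b_j)<m$, and read the vanishing of the sum of initial terms of the canonical $\phi$-expansion $f=\sum_j b_j\phi^{je}$ as $\mu(f)>\min_j\mu(b_j\phi^{je})$, contradicting Lemma \ref{minimal0}); and the generation step $\Delta=\kappa[\xi]$, resting on $s\mu(\phi)\in\g_{\mu,m}\Rightarrow e\mid s$ and on $\Delta^*=\kappa^*$, is correct.

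Two blemishes deserve attention. First, the parenthetical in your transcendence paragraph is wrong: after clearing denominators, all terms $\tilde c_j(\inm\phi)^{je}$ have the \emph{same} degree $de\mu(\phi)$, so they do not ``sit in pairwise distinct values'' and distinct exponents do not force distinct values here; the only valid mechanism is the one in your main clause, namely that a vanishing sum of equal-value initial terms of a canonical $\phi$-expansion contradicts Lemma \ref{minimal0}. Drop the ``or one checks directly'' alternative. Second, the residue-field step is thinner than ``as above'' suggests: for $c=\inm f/\inm g$ with $\mu(f)=\mu(g)$, the exponents occurring in the initial parts of the $\phi$-expansions of $f$ and $g$ are only pairwise congruent modulo $e$, not divisible by $e$, so the degree-zero computation does not transfer verbatim. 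One must factor out $(\inm\phi)^{s_*}$ for the least occurring exponent $s_*$ (common to numerator and denominator) and then divide both by a single homogeneous unit of the appropriate degree, which exhibits $c$ as a quotient of two elements of $\kappa[\xi]$ inside $\km$; injectivity of $\Delta\hookrightarrow\km$ then finishes the proof exactly as you say. With these routine repairs your outline is a complete proof along the expected lines.
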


These comensurable extensions $\mu/v$ admitting (MacLane-Vaqui\'e) key polynomials are called \emph{residually transcendental} valuations on $\kx$.

The pair $\phi,\, u$ determines a \emph{residual polynomial operator} 
$$
R=R_{\mu,\phi,u}\colon\;\kx\lra \kappa[y],
$$
which facilitates a complete description of the set $\kpm$, in terms of any fixed key polynomial of minimal degree.

\begin{theorem}\cite[Prop. 6.3]{KP}\label{charKP}
Suppose that $\mu$ is residually transcendental. Let $\phi\in\kpm$ of minimal degree $m$. 
A monic $\chi\in\kx$ is a key polynomial for $\mu$ if and  only if either
\begin{itemize}
\item $\deg(\chi)=m$ \,and\; $\chi\smu\phi$, or\vskip0.1cm
\item $\deg(\chi)=me\deg(R(\chi))$ \,and\; $R(\chi)$ is irreducible in $\kappa[y]$.
\end{itemize}
\end{theorem}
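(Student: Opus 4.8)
The plan is to translate the two conditions defining a key polynomial, $\mu$-minimality and $\mu$-irreducibility, into conditions on the $\phi$-expansion $\chi=\sum_s a_s\phi^s$ of $\chi$, using Lemma \ref{minimal0} (so that $\mu(\chi)=\min_s\mu(a_s\phi^s)$, $\phi$ being $\mu$-minimal), Theorem \ref{univbound}, and the element $\xi=(\inm\phi)^{e}u^{-1}\in\Delta$ together with the unit $u$ and $e=\erel(\mu)$ supplied by Theorem \ref{comm}. Since $m=\deg(\mu)$ is the least degree of a key polynomial, only monic $\chi$ with $\deg(\chi)\ge m$ are candidates; and if $\chi$ satisfies the first bullet, then $\chi\smu\phi$ gives $\inm\chi=\inm\phi$, so $\chi$ inherits $\mu$-minimality and $\mu$-irreducibility from $\phi$ and is a key polynomial. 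It thus remains to treat the other $\chi$, and here the residual polynomial $R(\chi)$ serves as the dictionary between the arithmetic of $\phi$-expansions and the algebra of $\ggm$.

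I would first dispose of the case $\deg(\chi)=m$, where $\chi=\phi+a_0$ with $\deg(a_0)<m$ and everything is decided by comparing $\mu(a_0)$ with $\gamma:=\mu(\phi)$. If $\mu(a_0)>\gamma$ then $\inm\chi=\inm\phi$, so $\chi\smu\phi$ and we are in the first bullet. If $\mu(a_0)<\gamma$ then $\inm\chi=\inm a_0$ is a unit by Lemma \ref{units}, so $(\inm\chi)\ggm=\ggm$ is not prime, $\chi$ is not a key polynomial, and both bullets of the theorem fail. If $\mu(a_0)=\gamma$ then $\gamma\in\g_{\mu,m}$, hence $e=1$; now $R(\chi)$ has degree $1$, the degree identity $\deg(\chi)=m=me\deg(R(\chi))$ holds and $R(\chi)$ is trivially irreducible, and I must check that $\chi$ really is a key polynomial. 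Its $\mu$-minimality is immediate from Theorem \ref{univbound}, since $\mu(\chi)/\deg(\chi)=\gamma/m$; its $\mu$-irreducibility follows from the general computation below, which applies here because the index $0$ is minimizing.

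Now let $\deg(\chi)>m$ and suppose $\chi$ is a key polynomial. Let $s_0$ be the least index at which $\mu(a_s\phi^s)$ is minimal. If $s_0>0$ then $\inm\phi\mid\inm\chi$ in $\ggm$; since $(\inm\chi)\ggm$ is prime this forces $\inm\chi=(\mbox{unit})\cdot\inm\phi$, so $\chi\smu a\phi$ for some $a$ with $\deg(a)<m$, whence $\inm\phi$ is a $\ggm$-multiple of $\inm\chi$, which contradicts the $\mu$-minimality of $\chi$ since $\deg(\phi)=m<\deg(\chi)$. So $s_0=0$. Using $(\inm\phi)^{e}=\xi u$, each minimizing term $\inm(a_{je}\phi^{je})$ is then a unit times $\xi^{j}$, and factoring out the $j=0$ term one obtains
$$
\inm\chi=(\mbox{unit})\cdot R(\chi)(\xi),\qquad R(\chi)(0)\ne 0,
$$
where $R(\chi)(\xi)\in\Delta$ is the specialization $y\mapsto\xi$; this is the basic formula for the residual polynomial. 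Consequently $\chi$ is $\mu$-irreducible if and only if $\bigl(R(\chi)(\xi)\bigr)\ggm$ is a non-zero prime ideal, and — since $\xi$ is transcendental over $\kappa$, $\Delta^{*}=\kappa^{*}$, and $\ggm$ is controlled over $\Delta=\kappa[\xi]$ by the results leading to Theorem \ref{comm} — this happens exactly when $R(\chi)$ is irreducible in $\kappa[y]$. On the other hand, $\mu$-minimality of $\chi$, read through Theorem \ref{univbound} as $\mu(\chi)/\deg(\chi)=\gamma/m$ together with $s_0=0$, is equivalent to the top index of the $\phi$-expansion being $e\deg(R(\chi))$ and its leading coefficient being a constant, i.e.\ to $\deg(\chi)=me\deg(R(\chi))$. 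Combining the last two equivalences yields the second bullet; and running the computation in reverse shows that any monic $\chi$ satisfying the second bullet is both $\mu$-minimal and $\mu$-irreducible, hence a key polynomial.

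The step I expect to carry the real weight is the equivalence ``$\chi$ is $\mu$-irreducible $\iff R(\chi)$ is irreducible in $\kappa[y]$''. One implication is easy: a factorization $R(\chi)=\psi_1\psi_2$ into non-constant polynomials gives $\inm\chi=(\mbox{unit})\,\psi_1(\xi)\psi_2(\xi)$ with both factors non-units (being non-constant polynomials in the transcendental $\xi$, while $\Delta^{*}=\kappa^{*}$), so $\inm\chi$ is reducible. For the converse one must know that $\ggm$ is well enough behaved over $\Delta=\kappa[\xi]$ — essentially torsion-free with good base change — so that quotienting by the ideal generated by an irreducible $R(\chi)(\xi)$ leaves an integral domain; this is where the structure theory underlying Theorem \ref{comm}, together with the formal properties of $R$ (its compatibility with products and the fact that $\inm f$ is a unit iff $R(f)$ is constant), are indispensable (see \cite{KP}). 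A secondary point requiring care is that ``$\deg(\chi)=me\deg(R(\chi))$'' really does capture $\mu$-minimality throughout the range $\deg(\chi)>m$, i.e.\ that no $\mu$-minimal monic polynomial has degree a non-multiple of $m$.
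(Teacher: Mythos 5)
The paper itself gives no proof of Theorem \ref{charKP}: it is quoted verbatim from \cite[Prop.~6.3]{KP}, so your attempt can only be measured against the standard argument of that reference, and your skeleton is indeed that argument. The degree-$m$ case via comparison of $\mu(a_0)$ with $\mu(\phi)$, the exclusion of $s_0>0$ for a key polynomial of degree $>m$ (your primality/minimality argument there is correct), and the translation of $\mu$-minimality into $\deg(\chi)=me\deg(R(\chi))$ through Theorem \ref{univbound} are all sound. The ``secondary point'' you flag also closes easily: if the leading $\phi$-coefficient $a_\ell$ of $\chi$ were non-constant, it would be monic of degree strictly between $0$ and $m$, so $\op{in}_\mu a_\ell$ is a unit by Lemma \ref{units}, hence $a_\ell\mid_\mu 1$ and $a_\ell$ is not $\mu$-minimal; Theorem \ref{univbound} then gives $\mu(a_\ell)/\deg(a_\ell)<\mu(\phi)/m$, whence $\mu(\chi)<\deg(\chi)\mu(\phi)/m$ and $\chi$ is not $\mu$-minimal.

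The genuine gap is exactly where you say the ``real weight'' lies, and you do not carry that weight: the sufficiency of the second bullet rests on the implication ``$R(\chi)$ irreducible in $\kappa[y]$ $\Rightarrow$ $\bigl(R(\chi)(\xi)\bigr)\ggm=(\inm\chi)\ggm$ is prime'', and for it you appeal to ``the structure theory underlying Theorem \ref{comm} \dots (see \cite{KP})'' --- i.e.\ to the very source of the statement being proved. Theorem \ref{comm} as quoted only controls the degree-zero piece $\Delta=\kappa[\xi]$; primality in all of $\ggm$ needs the finer structure (every nonzero homogeneous element is a homogeneous unit times a power of $\inm\phi$ times an element of $\Delta$, with $\inm\phi$ transcendental over the subring generated by the units), which is not among the results reproduced in this survey, and invoking Theorem \ref{ggm} instead would be circular, since the MacLane--Vaqui\'e machinery behind it already uses Theorem \ref{charKP} and its corollaries. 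Likewise, the identity $\inm\chi=\epsilon\,R(\chi)(\xi)$ and the degree formula for $R(\chi)$ are imported from \cite[Thm.~5.3]{KP} rather than derived (unavoidable here, since the survey never defines $R$, but it should be acknowledged as an input). So the necessity half (key $\Rightarrow$ one of the two bullets) is essentially complete, needing only your easy direction about reducible $R(\chi)$, but the converse for the second bullet is asserted, not proved; as it stands the proposal is a correct reduction of the theorem to the central primality statement, not a proof of it.
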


\begin{corollary}\cite[Prop. 6.6]{KP}\label{mid=sim}
For any two key polynomials $\chi,\,\chi'\in\kpm$, we have
$$
\chi\mmu \chi'\sii \chi\smu\chi'\sii R(\chi)=R(\chi').
$$
In this case, $\deg(\chi)=\deg(\chi')$.
\end{corollary}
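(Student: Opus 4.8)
The statement to prove is Corollary~\ref{mid=sim}: for key polynomials $\chi,\chi'\in\kpm$ of a residually transcendental valuation, $\chi\mmu\chi'\iff\chi\smu\chi'\iff R(\chi)=R(\chi')$, and in this case the degrees agree.

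My plan is to prove the chain of equivalences by going around the cycle. First I would establish $\chi\smu\chi'\Rightarrow R(\chi)=R(\chi')$: this should follow essentially from the definition of the residual polynomial operator $R=R_{\mu,\phi,u}$, since $\inm\chi=\inm\chi'$ and $R$ is built functorially out of initial forms relative to the fixed data $\phi,u$; more precisely, $\mu$-equivalent polynomials have the same $\phi$-expansion data in the relevant range of values, so the same residual polynomial. Next, $R(\chi)=R(\chi')\Rightarrow\chi\mmu\chi'$: here I would invoke Theorem~\ref{charKP} to see that $\deg(\chi)=me\deg(R(\chi))=me\deg(R(\chi'))=\deg(\chi')$, and then argue that the common value of $R$ forces $\inm\chi$ and $\inm\chi'$ to generate the same prime ideal — indeed, since $\chi,\chi'$ are $\mu$-irreducible, $(\inm\chi)\ggm$ and $(\inm\chi')\ggm$ are nonzero primes, and the residual polynomial being the same should identify these primes via the description of $\ggm$ as an extension of $\Delta=\kappa[\xi]$ from Theorem~\ref{comm}. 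Finally, $\chi\mmu\chi'\Rightarrow\chi\smu\chi'$: from $\inm\chi\mid\inm\chi'$ and the fact that both generate prime ideals of the same height, together with the degree constraint (a $\mu$-minimal polynomial cannot be properly $\mu$-divisible by one of strictly smaller degree, by $\mu$-minimality of $\chi'$, and symmetrically), we get $\deg(\chi)=\deg(\chi')$ and hence $\inm\chi$ and $\inm\chi'$ differ by a homogeneous factor of degree zero; that factor lies in $\Delta$, and I would show it must be a unit (using Lemma~\ref{units} and the irreducibility), giving $\chi\smu\chi'$.

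The cleanest route probably exploits the structure theorem more directly: by Theorem~\ref{comm}, homogeneous elements of $\ggm$ can be understood through $\Delta=\kappa[\xi]$, and the residual polynomial operator translates $\inm\chi$ (for $\chi$ a key polynomial of degree $>m$) into an irreducible polynomial in $\kappa[y]$ up to the unit ambiguity $\xi\mapsto$ the class of $(\inm\phi)^e u^{-1}$. So the assignment $[\chi]_\mu\mapsto R(\chi)$ should be well-defined and injective on proper classes of key polynomials of degree $>m$, and separately the improper class (degree $m$) is a single $\smu$-class by the remark before Lemma~\ref{groupchain0}. Assembling these gives the three-way equivalence; the degree equality $\deg(\chi)=\deg(\chi')$ is then immediate from Theorem~\ref{charKP} in the degree-$>m$ case and trivial in the degree-$m$ case.

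The main obstacle I anticipate is the implication $R(\chi)=R(\chi')\Rightarrow\chi\smu\chi'$ (equivalently $\chi\mmu\chi'$): one must show that the residual polynomial, which a priori only records the "leading residual behaviour" of the $\phi$-expansion, actually pins down the full initial form $\inm\chi$ up to a unit. This requires knowing that a key polynomial $\chi$ is $\mu$-equivalent to the "truncation" of its $\phi$-expansion that produces $R(\chi)$ — i.e., that the terms of the $\phi$-expansion contributing to $R(\chi)$ all have $\mu$-value exactly $\mu(\chi)$ and the rest have strictly larger value. This is where $\mu$-minimality of $\chi$ (via Lemma~\ref{minimal0}) and the precise normalization in the definition of $R$ must be combined carefully; once that structural fact is in hand, the equivalences and the degree statement follow formally.
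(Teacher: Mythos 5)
The paper gives no proof of this corollary at all --- it is imported verbatim from \cite[Prop.~6.6]{KP} --- so the only meaningful comparison is with the proof in that reference, and your outline does follow the same general route (the structure theorem \ref{comm}, the characterization of Theorem \ref{charKP}, and the translation of initial forms into residual polynomials). The difficulty is that this translation is exactly the point you leave open. Every implication in your plan funnels through the fact that $\inm\chi$ equals a homogeneous unit times $R(\chi)(\xi)$ (this is \cite[Thm.~5.3]{KP}, which the present paper itself invokes in the proof of Lemma~\ref{MinPolZi}); without it, neither $\chi\smu\chi'\Rightarrow R(\chi)=R(\chi')$ (this is not ``essentially the definition of $R$'': $R$ is computed from the $\phi$-expansion, and that $\mu$-equivalent polynomials, whose expansions may be quite different, have equal residual polynomials is a proposition, not a definition-chase) nor $R(\chi)=R(\chi')\Rightarrow\chi\mmu\chi'$ is actually established. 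Your own description of the missing step --- that the terms of the $\phi$-expansion of value $\mu(\chi)$ dominate --- is only the easy truncation observation; the substantive content is rewriting that truncated sum as a unit times a polynomial in $\xi$. So what you have is a correct plan whose central ingredient is postponed: either you cite \cite[Thm.~5.3]{KP} for it, or the proof is incomplete.

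Two further points need repair. In the step $\chi\mmu\chi'\Rightarrow\chi\smu\chi'$, showing that $\inm\chi'$ and $\inm\chi$ differ by a homogeneous unit $c$ of degree zero is not enough, because $\smu$ means equality of initial forms, so you must force $c=1$. This can be done, but requires an argument you do not give: first $\mu(\chi)=\mu(\chi')$ by the equality case of Theorem~\ref{univbound} (both are $\mu$-minimal, monic, of the same degree), and then, if $c\ne1$, one gets $\inm(\chi'-\chi)=(c-1)\inm\chi$, so $\chi$ would $\mu$-divide the nonzero polynomial $\chi'-\chi$ of strictly smaller degree, contradicting the $\mu$-minimality of $\chi$; hence $c=1$ and $\chi\smu\chi'$. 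Finally, your assembly ``proper classes of degree $>m$ plus the single improper class in degree $m$'' is only valid when $e>1$: for $e=1$ there is no improper class and there are in general many distinct classes of key polynomials of degree exactly $m$, so the case distinction must follow the two alternatives of Theorem~\ref{charKP} rather than the degree.
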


\section{Chains of augmentations}\label{secAugm}

Let $\mu$ be a valuation on $\kx$ extending the valuation $v$ on $K$. 

Let  $\gm\hookrightarrow \Lambda$ be an embedding of ordered groups, and let $\nu$ be a $\Lambda$-valued valuation whose restriction to $K$ is $v$. 
We say that $\mu\le\nu$ \,if
$$
\mu(f)\le \nu(f),\qquad \forall\,f\in\kx.
$$

In this case, there is a canonical homomorphism of graded $\gg_v$-algebras:
$$\ggm\lra\ggn,\qquad \inm f\longmapsto
\begin{cases}\inn f,& \mbox{ if }\mu(f)=\nu(f),\\ 0,& \mbox{  if }\mu(f)<\nu(f).\end{cases}
$$

A valuation $\mu$ is said to be \emph{maximal} if it admits no strict upper bounds. In other words, $\mu\le\nu$ implies $\mu=\nu$.

There are two perspectives concerning the comparison of $\mu$ with other valuations taking values in $\La$. We may ``look forward" and construct augmentations of $\mu$, or we may ``look backward" and try to describe the set
$$
 (-\infty,\mu)_{\Lambda}=\left\{\rho\colon \kx\to\Lambda\infty\mid \rho\mbox{ valuation, }\rho_{\mid K}=v,\ \rho<\mu\right\},
$$
usually with the purpose of finding ``constructable" approximations to $\mu$ from below.

In the discrete-rank one case, MacLane designed concrete procedures in both directions \cite{mcla}, which were generalised by Vaqui\'e to the general case \cite[Sec. 1]{Vaq}.

These procedures are summarized in Propositions \ref{extension} and \ref{gap} below. 


\subsection{Ordinary augmentations}\label{subsecOrdinary}
Suppose $\kpm\ne\emptyset$.
Choose $\phi\in \kpm$ and $\ga\in\Lambda\infty$ such that $\mu(\phi)<\ga$. 


The \emph{augmented valuation} of $\mu$ with respect to this pair of data is the mapping 
$$\mu':\kx\lra \Lambda \infty,\qquad f=\sum\nolimits_{0\le s}a_s\phi^s\longmapsto \mu'(f)=\min\{\mu(a_s)+s\ga\mid 0\le s\},
$$
defined in terms of $\phi$-expansions.
We use the notation $\mu'=[\mu;\phi,\ga]$. Note that $\mu'(\phi)=\ga$.

\begin{proposition}{\cite[sec. 1.1]{Vaq}, \cite[sec. 7]{KP}}\label{extension} 
\mbox{\null}
\begin{enumerate}
\item The mapping $\mu'=[\mu;\phi,\ga]$ is a valuation on $\kx$. 

If $\ga<\infty$, it has trivial support. If $\ga=\infty$, the support of $\mu'$ is $\phi\kx$.  
\item It satisfies $\mu<\mu'$. Moreover, for a non-zero $f\in\kx$, we have $$\mu(f)=\mu'(f)\sii\phi\nmid_{\mu}f.$$ In this case,
$\op{in}_{\mu'}f$ is a unit in $\gg_{\mu'}$.
\item If $\ga<\infty$, then $\phi$ is a key polynomial for $\mu'$ of minimal degree. In particular, $\deg(\mu')=\deg(\phi)$.
\item If $\phi$ is a proper key polynomial for $\mu$, then
$$ \gmp=\gen{\gm,\ga},\ \mbox{ if }\ \ga<\infty;\qquad \gmp=\gm,\ \mbox{ if }\ \ga=\infty.
$$ 
\end{enumerate}
\end{proposition}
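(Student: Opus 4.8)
The plan is to verify the four parts essentially in the order stated, since each builds on the previous. For part (1), I would first check that $\mu'$ is well defined: the $\phi$-expansion of any $f$ is unique (because $\phi$ is monic and $\deg(a_s)<\deg(\phi)$), so the formula $\mu'(f)=\min_s\{\mu(a_s)+s\ga\}$ makes sense. The inequality $\mu'(f+g)\ge\min\{\mu'(f),\mu'(g)\}$ is immediate from term-by-term comparison of $\phi$-expansions. Multiplicativity $\mu'(fg)=\mu'(f)+\mu'(g)$ is the only real content: write $f=\sum a_s\phi^s$, $g=\sum b_t\phi^t$, multiply out, and reduce each product $a_sb_t$ modulo $\phi$ (Euclidean division) to recover the $\phi$-expansion of $fg$; one then shows the minimum is attained and additive, using that $\mu$ itself is a valuation and — crucially — that $\phi$ is $\mu$-minimal, so by Lemma \ref{minimal0} the value $\mu(a_sb_t\phi^{s+t})$ is not lost when $a_sb_t$ is reduced mod $\phi$. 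This $\mu$-minimality input is the heart of the argument; I expect it to be the main obstacle, since one must carefully track that no cancellation in the leading graded piece occurs, i.e.\ that $\inm$ of the sum of minimal-value terms is nonzero. Then $\mu'$ restricts to $v$ on $K$ (constants are their own $\phi$-expansion), and the support claim follows: if $\ga<\infty$, then $\mu'(f)=\infty$ forces every $\mu(a_s)=\infty$, hence $f=0$ when the support of $\mu$ is trivial (which it is, as $\kpm\ne\emptyset$ forces trivial support by Theorem \ref{empty} and the discussion after it); if $\ga=\infty$, then $\mu'(f)=\infty$ iff $a_0=0$, i.e.\ iff $\phi\mid f$.

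For part (2), the inequality $\mu\le\mu'$ follows because $\mu(f)=\mu(\sum a_s\phi^s)\ge\min_s\mu(a_s\phi^s)$ and $\mu(\phi)<\ga$ gives $\mu(a_s)+s\mu(\phi)\le\mu(a_s)+s\ga$; strictness $\mu<\mu'$ holds since $\mu(\phi)<\ga=\mu'(\phi)$. For the equivalence $\mu(f)=\mu'(f)\iff\phi\nmid_\mu f$: if $\phi\nmid_\mu f$, then by $\mu$-minimality of $\phi$ and Lemma \ref{minimal0}, $\mu(f)=\min_s\mu(a_s\phi^s)$, and I claim the minimum is attained only at $s=0$ (otherwise $\inm\phi$ would divide $\inm f$); hence $\mu(f)=\mu(a_0)=\mu'(f)$ since the $s\ge 1$ terms have strictly larger $\mu'$-value. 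Conversely if $\phi\mmu f$ then the $s=0$ term is not of minimal $\mu$-value, so $\mu(f)>\mu(a_0)$ while one checks $\mu'(f)\le\mu(a_0)$, giving $\mu(f)>\mu'(f)$ — wait, that's backwards; rather one shows directly that $\mu(f)<\mu'(f)$ by comparing the term where $\mu$ sees cancellation with the $\mu'$-minimum. And when $\mu(f)=\mu'(f)$, the image $\op{in}_{\mu'}f$ equals (up to the graded map) $\op{in}_{\mu'}a_0$ with $\deg(a_0)<\deg(\phi)=\deg(\mu')$, so it is a unit by Lemma \ref{units}.

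For part (3), assuming $\ga<\infty$: $\phi$ is monic, and $\mu'$-minimality of $\phi$ is exactly Lemma \ref{minimal0} applied to $\mu'$, since the $\phi$-expansion formula defining $\mu'$ literally says $\mu'(f)=\min_s\mu'(a_s\phi^s)$ (note $\mu'(a_s)=\mu(a_s)$ as $\deg(a_s)<\deg(\phi)$ and by part (2) such $a_s$ have $\phi\nmid_\mu a_s$). For $\mu'$-irreducibility one shows $(\op{in}_{\mu'}\phi)\gg_{\mu'}$ is prime: if $\op{in}_{\mu'}\phi\mid\op{in}_{\mu'}(fg)$ one uses the $\phi$-expansion and the multiplicativity established in (1) to push the divisibility onto $f$ or $g$; this is the standard argument that a key polynomial of an augmented valuation behaves like a ``prime.'' That $\phi$ has \emph{minimal} degree among key polynomials of $\mu'$ follows because any nonzero $a$ with $\deg(a)<\deg(\phi)$ satisfies $\phi\nmid_\mu a$, hence by part (2) $\op{in}_{\mu'}a$ is a unit, so $a$ cannot be $\mu'$-irreducible; thus $\deg(\mu')=\deg(\phi)$. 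Finally part (4): if $\phi$ is a proper key polynomial for $\mu$, then by Lemma \ref{groupchain0} $\g_{\mu,\deg(\phi)}=\gm$, and by part (3) $\deg(\mu')=\deg(\phi)$; the value group $\gmp$ is generated by $\mu'(a_s\phi^s)=\mu(a_s)+s\ga$ over all $\phi$-expansions, i.e.\ by $\g_{\mu,\deg(\phi)}=\gm$ together with $\ga$ when $\ga<\infty$, giving $\gmp=\gen{\gm,\ga}$; when $\ga=\infty$ only the $s=0$ terms contribute finite values, so $\gmp=\g_{\mu,\deg(\phi)}=\gm$. The one subtlety to check is that every element of $\gm$ is realized, not just $\g_{\mu,\deg(\phi)}$ — but properness via Lemma \ref{groupchain0} gives exactly that equality, so this closes.
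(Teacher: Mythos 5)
The paper itself gives no proof of Proposition \ref{extension} (it cites \cite[Sec.\ 1.1]{Vaq} and \cite[Sec.\ 7]{KP}), so your plan must stand against the standard argument in those sources — and there it has one genuine gap: you never invoke the $\mu$-\emph{irreducibility} of $\phi$, i.e.\ that $(\inm\phi)\ggm$ is prime, which is the other half of the hypothesis ``$\phi\in\kpm$'' and is exactly what makes part (1) work. Lemma \ref{minimal0} ($\mu$-minimality) only yields $\mu'(fg)\ge\mu'(f)+\mu'(g)$: when a coefficient product $a_sb_t=q\phi+r$ is reduced mod $\phi$, minimality gives $\mu(r)\ge\mu(a_sb_t)$ and $\mu(q)+\mu(\phi)\ge\mu(a_sb_t)$, but the value of the remainder can be \emph{strictly} larger, precisely when $\phi\mmu a_sb_t$. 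For equality one isolates a critical pair $(s_0,t_0)$ (say, the largest indices attaining the respective minima) and must show $\mu(r_{s_0t_0})=\mu(a_{s_0}b_{t_0})$, i.e.\ $\phi\nmid_\mu a_{s_0}b_{t_0}$; since $\phi\nmid_\mu a_{s_0}$ and $\phi\nmid_\mu b_{t_0}$ (their degrees are $<\deg\phi$), the needed implication is exactly the primality of $\inm\phi$. Minimality alone is not enough: take $v=\ord_p$, $\mu=\omega_{0,0}$ the Gauss valuation and $\phi=x^2-x$, which is $\mu$-minimal but not $\mu$-irreducible; the formula gives $\mu'(x)=\mu'(x-1)=0$ but $\mu'(x^2-x)=\ga>0$, so the resulting map is not a valuation. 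The same missing input resurfaces in your part (3): ``push the divisibility onto $f$ or $g$'' for $\mu'$-irreducibility again reduces to showing $\phi\nmid_\mu a_0b_0$ for the constant coefficients, which is once more the primality of $\inm\phi$ in $\ggm$. So the step you flag as ``the main obstacle'' is real, but the tool you propose for it ($\mu$-minimality) cannot close it.

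There are also smaller slips worth fixing, though they are repairable. In part (2), your derivation of $\mu\le\mu'$ starts from $\mu(f)\ge\min_s\mu(a_s\phi^s)$, which is the wrong direction; you need the \emph{equality} $\mu(f)=\min_s\mu(a_s\phi^s)$ of Lemma \ref{minimal0}, combined with $\mu(a_s)+s\mu(\phi)\le\mu(a_s)+s\ga$. Your claim that for $\phi\nmid_\mu f$ the minimum is attained ``only at $s=0$'' is false (for the Gauss valuation, $\phi=x$, $f=x+1$ attains it at $s=0$ and $s=1$); what is true, and all you need, is that it is attained \emph{at} $s=0$, i.e.\ $\mu(f)=\mu(a_0)$, which follows since otherwise $\inm f=\inm(f-a_0)=\inm\phi\cdot\inm h$. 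The converse implication $\phi\mmu f\Rightarrow\mu(f)<\mu'(f)$ is left as an admitted loose end (``wait, that's backwards''); it follows by showing $\mu(a_0)\ge\mu(f-\phi h)>\mu(f)$ when $\inm f=\inm\phi\,\inm h$, whence every term $\mu(a_s)+s\ga$ exceeds $\mu(f)$. Also, in the last claim of (2) you quietly use $\deg(\mu')=\deg(\phi)$ (part (3), stated only for $\ga<\infty$) before proving it; for $\ga=\infty$ one should instead note that $\gg_{\mu'}$ has all nonzero homogeneous elements units via (\ref{isomL}). Parts (3) (minimal degree) and (4) are argued correctly and follow the same route as the cited sources.
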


\begin{proposition} \cite[Thm. 1.15]{Vaq}\label{gap}
If $\rho<\mu$, let $\Phi_{\rho,\mu}$ be the set of monic polynomials $\phi\in\kx$ of minimal degree satisfying $\rho(\phi)<\mu(\phi)$. 

Then, $\Phi_{\rho,\mu}\subset\kpr$ and for any $\phi\in \Phi_{\rho,\mu}$ we have 
$$\rho<[\rho;\phi,\mu(\phi)]\le\mu.$$ For any non-zero $f\in\kx$, the equality $\rho(f)=\mu(f)$ holds if and only if $\phi\nmid_\rho f$.
\end{proposition}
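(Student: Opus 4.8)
The plan is to prove Proposition \ref{gap} by combining the structural results on key polynomials (Lemma \ref{minimal0}, Theorem \ref{univbound}) with the comparison homomorphism $\ggr\to\gm$ attached to $\rho<\mu$. First I would observe that $\Phi_{\rho,\mu}$ is nonempty: since $\rho\ne\mu$, there is some $f$ with $\rho(f)<\mu(f)$, and taking one of minimal degree we obtain a polynomial $\phi$ of minimal degree with $\rho(\phi)<\mu(\phi)$; after dividing by the leading coefficient (which lies in $K$ and is unaffected) we may assume $\phi$ monic. The minimality of $\deg(\phi)$ means that $\rho(a)=\mu(a)$ for every $a\in\kx$ with $\deg(a)<\deg(\phi)$.

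Next I would show $\phi\in\kpr$, i.e.\ $\phi$ is $\rho$-minimal and $\rho$-irreducible. For $\rho$-minimality I would use Lemma \ref{minimal0}: given $f=\sum_s a_s\phi^s$ with $\deg(a_s)<\deg(\phi)$, I want $\rho(f)=\min_s\{\rho(a_s\phi^s)\}$. The inequality $\ge$ is automatic; for $\le$ I would feed the same $\phi$-expansion into $\mu$ and note that since $\deg(a_s)<\deg(\phi)$ we have $\rho(a_s)=\mu(a_s)$ while $\rho(\phi^s)\le\mu(\phi^s)$, so $\rho(a_s\phi^s)\le\mu(a_s\phi^s)$; combined with $\mu(f)\ge\rho(f)$ and a comparison of the minimizing terms this forces equality. (This is the technical heart and the place I expect to need care: one has to argue that the term(s) of minimal $\rho$-value among the $a_s\phi^s$ cannot all cancel, which follows because $\rho$ restricted to the subgroup generated by values of polynomials of degree $<\deg\phi$ together with $\rho(\phi)$ behaves like the minimum — essentially the same argument that makes $[\rho;\phi,\mu(\phi)]$ well defined below.) For $\rho$-irreducibility I would invoke that $\phi$ has minimal degree with $\rho(\phi)<\mu(\phi)$: if $\inr\phi$ factored nontrivially in $\ggr$, the factors would come from polynomials of smaller degree, on which $\rho$ and $\mu$ agree, leading to $\mu(\phi)=\rho(\phi)$, a contradiction; alternatively cite the general fact that a polynomial of minimal degree distinguishing two comparable valuations is a key polynomial for the smaller one, which is exactly \cite[Thm. 1.15]{Vaq}.

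Then I would set $\rho'=[\rho;\phi,\mu(\phi)]$, which is legitimate by Proposition \ref{extension}(1) since $\phi\in\kpr$ and $\rho(\phi)<\mu(\phi)$, and is a valuation with $\rho<\rho'$. To prove $\rho'\le\mu$, take any $f\in\kx$ with $\phi$-expansion $f=\sum_s a_s\phi^s$, $\deg(a_s)<\deg(\phi)$; then $\mu(f)\ge\min_s\mu(a_s\phi^s)=\min_s\{\rho(a_s)+s\mu(\phi)\}=\rho'(f)$, using $\mu(a_s)=\rho(a_s)$ and $\mu(\phi)$ is precisely the value assigned to $\phi$ by $\rho'$. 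Finally, for the last assertion, fix a nonzero $f$ and compute: $\rho(f)=\mu(f)$ iff (via Lemma \ref{minimal0} applied to $\rho$, which we may now use since $\phi\in\kpr$) the minimal $\rho$-value term in the $\phi$-expansion of $f$ occurs with $s=0$, i.e.\ $\rho(a_0)\le\rho(a_s)+s\rho(\phi)$ for all $s$; and since $\rho(a_s)=\mu(a_s)$ and $\mu(\phi)>\rho(\phi)$ one checks this is equivalent to $\mu(f)$ still being attained at $s=0$ as well, which is the statement $\phi\nmid_\rho f$ by Proposition \ref{extension}(2)-type reasoning (the condition $\phi\nmid_\rho f$ means $s=0$ is the unique minimizer). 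I would spell out this last equivalence carefully since the "only if" direction uses that raising the value of $\phi$ from $\rho(\phi)$ toward $\mu(\phi)$ can only strictly raise $\mu(f)$ above $\rho(f)$ once some $s\ge 1$ term ties or beats the $s=0$ term.

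The main obstacle I anticipate is the $\rho$-minimality verification in the second step: making rigorous that the minimum in the $\phi$-expansion is actually attained by $\rho$ (no unexpected cancellation in the leading graded piece), which is really a statement about the homomorphism $\ggr\to\gm$ being injective on the relevant homogeneous components. Everything else is bookkeeping with $\phi$-expansions and the already-cited propositions.
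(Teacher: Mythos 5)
The paper itself offers no proof of this proposition --- it is imported verbatim from \cite[Thm.\ 1.15]{Vaq} --- so your argument has to stand on its own, and it has a genuine gap exactly where you flag ``the technical heart'': the verification that $\phi\in\kpr$. For $\rho$-minimality, the inequalities you invoke, $\rho(a_s\phi^s)\le\mu(a_s\phi^s)$ and $\rho(f)\le\mu(f)$, all point in the same direction and cannot force $\rho(f)=\min_s\rho(a_s\phi^s)$; and the appeal to ``essentially the same argument that makes $[\rho;\phi,\mu(\phi)]$ well defined'' is circular, since Proposition \ref{extension} assumes $\phi\in\kpr$ in order to know that the augmentation is a valuation at all. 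A correct and short route avoids Lemma \ref{minimal0} and uses the definition of $\rho$-minimality directly: if $\phi\mid_\rho f$ with $f\ne0$ and $\deg(f)<\deg(\phi)$, write $\inr f=\inr\phi\cdot\inr g$, i.e.\ $f\smr \phi g$; then $\mu(f-\phi g)\ge\rho(f-\phi g)>\rho(f)$ and $\mu(\phi g)=\mu(\phi)+\mu(g)>\rho(\phi)+\rho(g)=\rho(f)$, so $\mu(f)>\rho(f)$, contradicting $\mu(f)=\rho(f)$, which holds because $\deg(f)<\deg(\phi)$ and $\phi$ has minimal degree in $\Phi_{\rho,\mu}$.

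The $\rho$-irreducibility step is a second genuine gap. The paper's definition requires $(\inr\phi)\ggr$ to be a \emph{prime} ideal, not merely that $\inr\phi$ admit no nontrivial factorization; you also give no reason why a factorization into non-units could be realized by polynomials of degree $<\deg(\phi)$ (Lemma \ref{units} would presuppose knowing $\kpr\ne\emptyset$ and $\deg(\rho)$, which is part of what is being proved), and the fallback of ``citing \cite[Thm.\ 1.15]{Vaq}'' is citing the very statement under proof. The standard repair is to establish first, with minimality already in hand, the equivalence $\phi\mid_\rho f\iff\rho(f)<\mu(f)$ (this is the final assertion of the proposition): the implication $\phi\mid_\rho f\Rightarrow\rho(f)<\mu(f)$ is the displayed computation above, which never uses $\deg(f)<\deg(\phi)$, and the converse follows from the $\phi$-expansion, since $\rho(a_0)=\rho(f)$ forces $\mu(f)=\mu(a_0)=\rho(f)$, while $\rho(a_0)>\rho(f)$ gives $\inr f=\inr(f-a_0)=\inr\phi\cdot\inr\bigl(\sum_{s\ge1}a_s\phi^{s-1}\bigr)$. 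Primeness is then immediate from multiplicativity: $\phi\mid_\rho fg\Rightarrow\rho(fg)<\mu(fg)\Rightarrow\rho(f)<\mu(f)$ or $\rho(g)<\mu(g)\Rightarrow\phi\mid_\rho f$ or $\phi\mid_\rho g$; and the same equivalence yields your last claim without further work. Your steps $\rho<[\rho;\phi,\mu(\phi)]\le\mu$ are fine once $\phi\in\kpr$ is secured, but note one slip at the end: $\phi\nmid_\rho f$ is equivalent to the minimum of $\rho(a_s\phi^s)$ being \emph{attained} at $s=0$, not to $s=0$ being the unique minimizer.
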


The following observation is an immediate consequence of  Propositions \ref{extension} and \ref{gap}.

\begin{theorem}\label{maximal}
A valuation $\mu$ is maximal if and only if $\,\kpm=\emptyset$.
\end{theorem}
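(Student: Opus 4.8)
The plan is to prove both implications of the equivalence ``$\mu$ maximal $\iff \kpm = \emptyset$'' directly from the two augmentation propositions.

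First I would establish the contrapositive of the forward direction: if $\kpm \ne \emptyset$, then $\mu$ is not maximal. Pick any $\phi \in \kpm$ and any $\ga \in \Lambda$ with $\mu(\phi) < \ga < \infty$; such a $\ga$ exists since the value group embeds in the ordered group $\Lambda$, which has no maximal element above $\mu(\phi)$ (one may even enlarge $\Lambda$ if necessary, e.g. take $\Lambda$ to contain $\gm \oplus \Z$ lexicographically). Form the ordinary augmentation $\mu' = [\mu;\phi,\ga]$. By Proposition \ref{extension}(1) this is a valuation on $\kx$ restricting to $v$ on $K$, and by Proposition \ref{extension}(2) we have $\mu < \mu'$. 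Hence $\mu$ admits a strict upper bound and is not maximal.

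Next I would handle the reverse direction: if $\kpm = \emptyset$, then $\mu$ is maximal. Suppose for contradiction that $\mu < \nu$ for some $\Lambda$-valued valuation $\nu$ extending $v$. Apply Proposition \ref{gap} with $\rho = \mu$ and the larger valuation playing the role of ``$\mu$'' there: the set $\Phi_{\mu,\nu}$ of monic polynomials of minimal degree with $\mu(\phi) < \nu(\phi)$ is nonempty (since $\mu \ne \nu$ forces $\mu(f) < \nu(f)$ for some $f$, and among all such $f$ there is one of minimal degree which can be taken monic), and Proposition \ref{gap} asserts $\Phi_{\mu,\nu} \subset \kpm$. This contradicts $\kpm = \emptyset$. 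Therefore no such $\nu$ exists and $\mu$ is maximal.

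I expect the main (only real) obstacle to be a bookkeeping point rather than a deep one: making sure that in the forward direction the target group $\Lambda$ genuinely contains a value $\ga > \mu(\phi)$, and in the backward direction that ``minimal degree monic polynomial with $\mu(\phi)<\nu(\phi)$'' is legitimately nonempty and lands in $\Phi_{\mu,\nu}$ as defined. Both are routine: for the first, one is always free to take $\ga = \mu(\phi) + \delta$ after embedding into a suitably large ordered group; for the second, the minimal-degree polynomial can be normalized to be monic without changing the strict inequality of values (scaling by a constant in $K^*$ shifts both sides by $v$ of that constant). Once these are dispatched, the statement follows immediately by combining Propositions \ref{extension} and \ref{gap}, as the text already hints.
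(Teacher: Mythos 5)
Your proof is correct and is essentially the paper's own argument: the paper states Theorem \ref{maximal} as an immediate consequence of Propositions \ref{extension} and \ref{gap}, which is exactly what you spell out (augmentation by any $\phi\in\kpm$ gives a strict upper bound; conversely $\mu<\nu$ forces $\Phi_{\mu,\nu}\ne\emptyset$ and hence $\kpm\ne\emptyset$ by Proposition \ref{gap}). The only remark is that your bookkeeping about enlarging $\Lambda$ is unnecessary, since one may simply take $\ga=\infty\in\Lambda\infty$ in the augmentation $[\mu;\phi,\infty]$.
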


Theorem \ref{interval} is another relevant consequence of Proposition \ref{gap}. The proof given in \cite[Thm. 3.9]{defless} for the group $\La=\gq$ is valid for any ordered group $\La$.

\begin{theorem}\label{interval}
For any valuation $\mu\colon\kx\to\La\infty$, the set $(-\infty,\mu)_{\La}$ is totally ordered.
\end{theorem}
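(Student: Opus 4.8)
The plan is to show that any two valuations $\rho_1,\rho_2\in(-\infty,\mu)_\La$ are comparable. Suppose they are not; then neither $\rho_1\le\rho_2$ nor $\rho_2\le\rho_1$, so there exist $f,g\in\kx$ with $\rho_1(f)<\rho_2(f)$ and $\rho_2(g)<\rho_1(g)$. The idea is to pass from $\rho_1,\rho_2$ to the ``obstruction polynomials'' supplied by Proposition \ref{gap}: since $\rho_1<\mu$, the set $\Phi_{\rho_1,\mu}$ of monic polynomials of minimal degree with $\rho_1(\phi)<\mu(\phi)$ is nonempty and contained in $\kp(\rho_1)$; pick $\phi_1\in\Phi_{\rho_1,\mu}$, and similarly $\phi_2\in\Phi_{\rho_2,\mu}$. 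Proposition \ref{gap} also tells us that for nonzero $h$, one has $\rho_i(h)=\mu(h)$ if and only if $\phi_i\nmid_{\rho_i}h$.

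First I would dispose of the case where $\rho_1$ and $\rho_2$ agree on one of the $\phi_i$, say $\rho_1(\phi_2)=\rho_2(\phi_2)$. Combined with $\rho_2(\phi_2)<\mu(\phi_2)$ this gives $\rho_1(\phi_2)<\mu(\phi_2)$, so $\deg(\phi_1)\le\deg(\phi_2)$ by minimality of $\deg(\phi_1)$ in $\Phi_{\rho_1,\mu}$; symmetrically, if also $\rho_2(\phi_1)=\rho_1(\phi_1)$ we get $\deg(\phi_2)\le\deg(\phi_1)$, hence equal degrees, and then I would argue (using $\mu$-minimality of each $\phi_i$ for $\rho_i$ and Lemma \ref{minimal0} applied to the relevant $\phi$-expansions) that $[\rho_1;\phi_1,\mu(\phi_1)]$ and $[\rho_2;\phi_2,\mu(\phi_2)]$ must coincide, both being $\le\mu$ and controlled by the same degree, contradicting the incomparability. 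So the remaining, main case is $\rho_1(\phi_2)\ne\rho_2(\phi_2)$ and $\rho_2(\phi_1)\ne\rho_1(\phi_1)$.

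The heart of the argument is then a cross-comparison. Consider $\phi_1$, which has $\rho_1(\phi_1)<\mu(\phi_1)$. Expand $\phi_1$ in the $\phi_2$-expansion and use $\rho_2$-minimality of $\phi_2$ (Lemma \ref{minimal0}) to read off $\rho_2(\phi_1)$ as a minimum of the terms $\rho_2(a_s\phi_2^s)$. Compare with $\mu(\phi_1)\ge\mn\{\mu(a_s\phi_2^s)\}$; since $[\rho_2;\phi_2,\mu(\phi_2)]\le\mu$ (Proposition \ref{gap}), evaluating this augmented valuation on $\phi_1$ gives $\mn\{\rho_2(a_s)+s\mu(\phi_2)\}\le\mu(\phi_1)$. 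Play $\rho_1(\phi_1)<\mu(\phi_1)$ against $\rho_2(\phi_1)$ versus these bounds: because $\deg(a_s)<\deg(\phi_2)$, minimality of $\deg(\phi_2)$ in $\Phi_{\rho_2,\mu}$ forces $\rho_2(a_s)=\mu(a_s)$ for every $s$, so $\rho_2(\phi_1)=\mn\{\mu(a_s)+s\rho_2(\phi_2)\}$ while $\mu(\phi_1)\ge\mn\{\mu(a_s)+s\mu(\phi_2)\}$. Since $\rho_2(\phi_2)<\mu(\phi_2)$, the first minimum is $\le$ the second, giving $\rho_2(\phi_1)\le\mu(\phi_1)$, and more precisely it pins down where the gap opens. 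Running the same computation with the roles of $1$ and $2$ exchanged yields $\rho_1(\phi_2)\le\mu(\phi_2)$. The contradiction is then extracted by comparing the two chains: one shows that $\phi_1$ and $\phi_2$ force $\deg(\phi_1)\le\deg(\phi_2)$ and $\deg(\phi_2)\le\deg(\phi_1)$ (each is, on the relevant side, a polynomial of smaller-or-equal degree on which the other $\rho$ still drops below $\mu$), hence $\deg(\phi_1)=\deg(\phi_2)$, and finally that $[\rho_1;\phi_1,\mu(\phi_1)]=[\rho_2;\phi_2,\mu(\phi_2)]$ as valuations $\le\mu$, contradicting $\rho_1(f)<\rho_2(f)$ or $\rho_2(g)<\rho_1(g)$ after tracing $f,g$ through the augmentation formula.

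The step I expect to be the main obstacle is the bookkeeping in the cross-comparison: correctly juggling the three valuations $\rho_1,\rho_2,\mu$ on a single $\phi$-expansion, and in particular justifying that the minimal degree in $\Phi_{\rho_i,\mu}$ really does force $\rho_i$ and $\mu$ to agree on all coefficients of degree $<\deg(\phi_i)$ — this is where Proposition \ref{gap}'s ``$\phi\nmid_\rho h$'' clause and Lemma \ref{minimal0} must be combined carefully. Once the degree equality $\deg(\phi_1)=\deg(\phi_2)$ is in hand and one knows each $\rho_i$ agrees with $\mu$ below that degree, identifying the two augmented valuations and deriving the contradiction is routine. (I note the paper remarks that the proof of \cite[Thm. 3.9]{defless} for $\La=\gq$ goes through verbatim for arbitrary $\La$, so the argument above is essentially a transcription of that one, the only point to check being that nothing used there was special to the divisible hull — and indeed the whole argument is phrased purely in terms of Propositions \ref{extension}, \ref{gap} and Lemma \ref{minimal0}, none of which assumes $\La=\gq$.)
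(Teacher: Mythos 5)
Your toolkit (Proposition \ref{gap}, Lemma \ref{minimal0}, agreement of $\rho_i$ with $\mu$ in degree $<\deg(\phi_i)$) is the right one, but the logical skeleton has two genuine gaps. First, the conclusion you extract from the cross-comparison, $\rho_2(\phi_1)\le\mu(\phi_1)$, is vacuous: it already follows from $\rho_2<\mu$. To force $\deg(\phi_2)\le\deg(\phi_1)$ you would need the strict inequality $\rho_2(\phi_1)<\mu(\phi_1)$, and this need not hold: if $\deg(\phi_1)<\deg(\phi_2)$, minimality of $\deg(\phi_2)$ in $\Phi_{\rho_2,\mu}$ gives $\rho_2(\phi_1)=\mu(\phi_1)$, and the correct conclusion in that situation is not a degree contradiction but simply $\rho_1<\rho_2$. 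Second, and more seriously, the terminal contradiction is not there: even granting $\deg(\phi_1)=\deg(\phi_2)$ and $[\rho_1;\phi_1,\mu(\phi_1)]=[\rho_2;\phi_2,\mu(\phi_2)]$, this does not contradict incomparability of $\rho_1,\rho_2$; it only says that both lie strictly below a common valuation, which is precisely the configuration whose impossibility the theorem asserts (with $\mu$ replaced by that augmentation). The augmentation $[\rho_i;\phi_i,\mu(\phi_i)]$ records $\rho_i$ only in degrees $<\deg(\phi_i)$ together with the value $\mu(\phi_i)$, and forgets $\rho_i(\phi_i)$, so ``tracing $f,g$ through the augmentation formula'' cannot recover the inequalities $\rho_1(f)<\rho_2(f)$ and $\rho_2(g)<\rho_1(g)$.

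The argument closes if you run it directly rather than by contradiction. Put $m_i=\deg(\phi_i)$ and recall that $\rho_i(a)=\mu(a)$ whenever $\deg(a)<m_i$. After relabelling, assume $m_1\le m_2$ and, if $m_1=m_2$, also $\rho_1(\phi_1)\le\rho_2(\phi_2)$. Since $\phi_1\in\op{KP}(\rho_1)$ is $\rho_1$-minimal, Lemma \ref{minimal0} applied to the $\phi_1$-expansion $f=\sum_s a_s\phi_1^s$ gives $\rho_1(f)=\min_s\{\mu(a_s)+s\rho_1(\phi_1)\}$, while $\rho_2(f)\ge\min_s\{\mu(a_s)+s\rho_2(\phi_1)\}$ because $\deg(a_s)<m_1\le m_2$. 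So everything reduces to the single inequality $\rho_1(\phi_1)\le\rho_2(\phi_1)$, which then yields $\rho_1\le\rho_2$ for all $f$. If $m_1<m_2$ it holds because $\rho_2(\phi_1)=\mu(\phi_1)>\rho_1(\phi_1)$. If $m_1=m_2$, write $\phi_2=\phi_1+a$ with $\deg(a)<m_1$: either $\mu(a)\ge\rho_1(\phi_1)$, and then $\rho_2(\phi_1)\ge\min\{\rho_2(\phi_2),\mu(a)\}\ge\rho_1(\phi_1)$ by the choice of labels; or $\mu(a)<\rho_1(\phi_1)\le\mu(\phi_1)$, and then $\mu(\phi_2)=\mu(a)<\rho_1(\phi_1)\le\rho_2(\phi_2)$, contradicting $\rho_2(\phi_2)<\mu(\phi_2)$. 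This is, in substance, the argument of \cite[Thm. 3.9]{defless} that the paper invokes without reproducing it; your closing observation that nothing in it is special to $\La=\gq$ is correct, but the reconstruction as you have written it does not yet deliver the contradiction.
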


Let us derive some more practical consequences of Propositions \ref{extension} and \ref{gap}.

\begin{corollary}\label{Phiclass}
Take $\rho<\mu$ as above, and let $\nu$ be another valuation. Then,
\begin{enumerate}
\item $\Phi_{\rho,\mu}=[\phi]_\rho$, for all $\phi\in \Phi_{\rho,\mu}$.
\item $\rho<\mu<\nu\imp \Phi_{\rho,\mu}=\Phi_{\rho,\nu}$. In particular,
$$
\rho(f)=\nu(f)\sii \rho(f)=\mu(f),\qquad  \forall\,f\in\kx.
$$
\end{enumerate}
\end{corollary}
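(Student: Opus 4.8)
The plan is to derive both parts directly from Proposition \ref{gap}, which already pins down the set $\Phi_{\rho,\mu}$ as a class of key polynomials for $\rho$ and characterizes when $\rho(f)=\mu(f)$.

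For part (1): by Proposition \ref{gap} we have $\Phi_{\rho,\mu}\subset\kpr$, so it suffices to show that $\Phi_{\rho,\mu}$ is exactly one full $\rho$-equivalence class of key polynomials. Fix $\phi\in\Phi_{\rho,\mu}$. First, every $\phi'\in[\phi]_\rho$ that is monic of the same degree lies in $\Phi_{\rho,\mu}$: since $\phi'\smr\phi$ gives $\inr\phi'=\inr\phi$, and since $\rho(\phi)<\mu(\phi)$ means $\inr\phi$ maps to zero under $\ggr\to\ggm$ (using the canonical homomorphism of graded algebras $\ggr\to\ggm$ attached to $\rho\le\mu$), the same holds for $\phi'$, i.e. $\rho(\phi')<\mu(\phi')$; minimality of degree is inherited because $\deg(\phi')=\deg(\phi)$. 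Conversely, if $\phi'\in\Phi_{\rho,\mu}$, then by Proposition \ref{gap} the last assertion applied to $f=\phi'$ forces $\phi\mmu_\rho\phi'$ is false, i.e. $\phi\mid_\rho\phi'$ fails; wait — more carefully, the characterization says $\rho(\phi')=\mu(\phi')$ iff $\phi\nmid_\rho\phi'$, and since $\rho(\phi')<\mu(\phi')$ we get $\phi\mid_\rho\phi'$. By minimality of $\deg(\phi')$ this means $\phi$ is not $\rho$-minimal would be contradicted unless $\deg(\phi')=\deg(\phi)$; so $\deg(\phi')=\deg(\phi)$ and $\phi\mid_\rho\phi'$, whence, both being key polynomials (hence $\rho$-irreducible and $\rho$-minimal) of the same degree, Corollary \ref{mid=sim} (or the analogous divisibility-equals-equivalence fact for key polynomials) gives $\phi'\smr\phi$. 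Thus $\Phi_{\rho,\mu}=[\phi]_\rho$.

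For part (2): assume $\rho<\mu<\nu$. Apply part (1) to both pairs: $\Phi_{\rho,\mu}=[\phi]_\rho$ for any $\phi\in\Phi_{\rho,\mu}$, and $\Phi_{\rho,\nu}=[\psi]_\rho$ for any $\psi\in\Phi_{\rho,\nu}$. It suffices to exhibit a common element. Take $\phi\in\Phi_{\rho,\mu}$, so $\deg(\phi)$ is minimal with $\rho(\phi)<\mu(\phi)$; since $\mu<\nu$ we have $\mu(\phi)\le\nu(\phi)$, hence $\rho(\phi)<\nu(\phi)$, so $\phi$ is a monic polynomial with $\rho(\phi)<\nu(\phi)$. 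Its degree is at least $\deg(\Phi_{\rho,\nu})$. For the reverse inequality, take $\psi\in\Phi_{\rho,\nu}$ of minimal degree with $\rho(\psi)<\nu(\psi)$; I need $\rho(\psi)<\mu(\psi)$, so that $\deg(\psi)\ge\deg(\phi)$. If instead $\rho(\psi)=\mu(\psi)$, then by Proposition \ref{gap} applied to the pair $\rho<\mu$ and the polynomial $f=\psi$, this is equivalent to $\phi\nmid_\rho\psi$; but then applying Proposition \ref{gap} to the pair $\rho<\nu$ — here I must first check $\phi\in\Phi_{\rho,\nu}$, which holds by the previous sentence — the same $\phi\nmid_\rho\psi$ would force $\rho(\psi)=\nu(\psi)$, contradicting $\rho(\psi)<\nu(\psi)$. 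Hence $\rho(\psi)<\mu(\psi)$, giving $\deg(\psi)=\deg(\phi)$ and $\phi,\psi$ in the same $\rho$-class, so $\Phi_{\rho,\mu}=\Phi_{\rho,\nu}$. The displayed equivalence then follows: for any $f$, by Proposition \ref{gap} applied to the two pairs with the now-common set $\Phi_{\rho,\mu}=\Phi_{\rho,\nu}=[\phi]_\rho$, both $\rho(f)=\mu(f)$ and $\rho(f)=\nu(f)$ are equivalent to the single condition $\phi\nmid_\rho f$.

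The main obstacle I expect is the bookkeeping in part (1) around upgrading ``$\phi\mid_\rho\phi'$ with equal degrees, both key'' to ``$\phi\smr\phi'$'': Corollary \ref{mid=sim} is stated for residually transcendental $\mu$, so in the incommensurable or general case one must instead invoke the relevant fact from \cite{KP} that for key polynomials $\rho$-divisibility implies $\rho$-equivalence (this is really where $\mu$-irreducibility plus $\mu$-minimality are both used). Everything else is a direct unwinding of the two cited Propositions, with the only subtlety being the order in which one applies Proposition \ref{gap} to the pairs $\rho<\mu$ and $\rho<\nu$ so as to transfer the membership $\phi\in\Phi_{\rho,\mu}$ to $\phi\in\Phi_{\rho,\nu}$ before comparing degrees.
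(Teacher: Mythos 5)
Your part (1) is correct and is essentially the paper's own argument: both inclusions come down to Proposition \ref{gap} together with the fact that, for key polynomials for $\rho$, $\rho$-divisibility is equivalent to $\rho$-equivalence (Corollary \ref{mid=sim}); the caveat you raise about residual transcendence is harmless, since when $\rho/v$ is incommensurable all key polynomials for $\rho$ lie in a single $\rho$-equivalence class, so the implication you need holds trivially. Your variant for the inclusion $[\phi]_\rho\subset\Phi_{\rho,\mu}$, via the canonical homomorphism $\ggr\to\ggm$ killing $\inr\phi$, is also fine.

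In part (2), however, there is a genuine gap. To rule out $\rho(\psi)=\mu(\psi)$ for $\psi\in\Phi_{\rho,\nu}$ you apply Proposition \ref{gap} to the pair $\rho<\nu$ with $\phi$ playing the role of the distinguished polynomial, justified by the claim that $\phi\in\Phi_{\rho,\nu}$ ``holds by the previous sentence''. It does not: at that point you only know that $\phi$ is monic with $\rho(\phi)<\nu(\phi)$, hence $\deg(\phi)\ge\deg\left(\Phi_{\rho,\nu}\right)$; membership in $\Phi_{\rho,\nu}$ additionally requires $\deg(\phi)\le\deg(\psi)$, which is exactly the ``reverse inequality'' you are in the middle of proving, so the step is circular as written. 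The repair is to run Proposition \ref{gap} in the other direction, as the paper does: for the pair $\rho<\nu$ the polynomial $\psi\in\Phi_{\rho,\nu}$ is a legitimate distinguished element, and $\rho(\phi)<\nu(\phi)$ then yields $\psi\mid_\rho\phi$; since $\phi$ and $\psi$ are both key polynomials for $\rho$, divisibility gives $\psi\sim_\rho\phi$ (and equal degrees), whence $\Phi_{\rho,\nu}=[\psi]_\rho=[\phi]_\rho=\Phi_{\rho,\mu}$ by part (1). With this correction, the displayed equivalence follows exactly as you state it.
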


\begin{proof}
For all $\chi\in\kpr$, $\phi\in\Phi_{\rho,\mu}$, Proposition \ref{gap} and  Corollary \ref{mid=sim}  show that
$$
\chi\in\Phi_{\rho,\mu}\ \sii\ \phi\mid_{\rho}\chi\ \sii\ \phi\sim_\rho\chi.
$$ 

On the other hand, if $\rho<\mu<\nu$, let $\Phi_{\rho,\mu}=[\phi]_\rho$ $\Phi_{\rho,\nu}=[\phi']_\rho$. Since
$$
\rho(\phi)<\mu(\phi)\le\nu(\phi),
$$
Proposition \ref{gap} shows that $\phi'\mid_\rho \phi$. Thus, $\phi'\sim_\rho \phi$, by Corollary \ref{mid=sim}. 
\end{proof}

\begin{corollary}\label{KerImGr}
Suppose that $\mu<\nu$ and let $\phmn=\cl{\phi}$. Then,
\begin{enumerate}
\item The kernel of the homomorphism $\ggm\to\ggn$ is the prime ideal $(\inm\phi)\ggm$.
\item All non-zero homogeneous elements in the image of \ $\ggm\to\ggn$ are units.
\item  If $\op{KP}(\nu)\ne\emptyset$, then $\deg(\mu)\le \deg(\nu)$.
\end{enumerate}
\end{corollary}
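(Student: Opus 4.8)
The plan is to build everything from the augmentation description of the pair $\mu<\nu$. Since $\mu<\nu$, Proposition \ref{gap} applies with $\rho=\mu$: pick any $\phi\in\Phi_{\mu,\nu}$, so that $\phi\in\kpm$ is of minimal degree among monic polynomials with $\mu(\phi)<\nu(\phi)$, and by Corollary \ref{Phiclass}(1) we have $\Phi_{\mu,\nu}=[\phi]_\mu=\phmn$. Proposition \ref{gap} also gives the key computational fact I will use repeatedly: for a non-zero $f\in\kx$, $\mu(f)=\nu(f)$ if and only if $\phi\nmid_\mu f$, i.e.\ if and only if $\inm\phi\nmid\inm f$ in $\ggm$.

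For part (1), recall that the kernel of $\ggm\to\ggn$ is the homogeneous ideal generated by all $\inm f$ with $\mu(f)<\nu(f)$, together with $\inm\p=0$ when relevant. By the displayed equivalence, $\inm f$ lies in the kernel exactly when $\inm\phi\mid\inm f$, i.e.\ exactly when $\inm f\in(\inm\phi)\ggm$. Hence the kernel is precisely $(\inm\phi)\ggm$; since $\phi\in\kpm$ is $\mu$-irreducible, this is a non-zero prime ideal, as asserted. For part (2), a non-zero homogeneous element in the image is $\inn f$ with $\mu(f)=\nu(f)$, equivalently $\phi\nmid_\mu f$, equivalently $\inm f\notin(\inm\phi)\ggm$. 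I would argue that such $\inm f$ is a unit in $\gg_{\mu'}$ where $\mu'=[\mu;\phi,\nu(\phi)]$: indeed, $\mu'\le\nu$ by Proposition \ref{gap}, and by Proposition \ref{extension}(2), $\phi\nmid_\mu f$ forces $\mu(f)=\mu'(f)$ and $\op{in}_{\mu'}f$ a unit in $\gg_{\mu'}$. Then pushing this unit forward along $\gg_{\mu'}\to\ggn$ (valid since $\mu'\le\nu$) shows $\inn f$ is a unit in $\ggn$.

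For part (3), suppose $\op{KP}(\nu)\neq\emptyset$ and pick $\psi\in\op{KP}(\nu)$ of minimal degree $\deg(\nu)$. I want $\deg(\mu)\le\deg(\psi)$. The idea is that $\psi$ cannot be $\nu$-minimal if its degree were too small relative to $\deg(\mu)$; more precisely, I would show any monic $g\in\kx$ with $\deg(g)<\deg(\mu)$ satisfies $\mu(g)=\nu(g)$. Such a $g$ has degree below $\deg(\mu)=\deg(\phi)$, hence $\phi\nmid_\mu g$ for degree reasons (a polynomial of strictly smaller degree cannot be $\mu$-divisible by $\phi$, which is $\mu$-minimal), so $\mu(g)=\nu(g)$ by the equivalence above. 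Now if $\deg(\psi)<\deg(\mu)$, then for every monic $g$ of degree $<\deg(\mu)$, and in particular for $\psi$ and all lower-degree polynomials, $\mu$ and $\nu$ agree; one then checks this contradicts $\psi$ being $\nu$-irreducible, because $\inn\psi$ would be the image of $\inm\psi$, and by part (2) that image is a unit — but a unit cannot generate a non-zero prime ideal. Hence $\deg(\nu)=\deg(\psi)\ge\deg(\mu)$.

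The main obstacle is part (3): it requires carefully linking "$\deg(\psi)<\deg(\mu)$'' to a contradiction with $\psi\in\op{KP}(\nu)$, and the cleanest route is to observe that $\psi$ of degree $<\deg(\mu)=\deg(\phi)$ has $\inn\psi$ a unit in $\ggn$ (via part (2), since $\phi\nmid_\mu\psi$), which is incompatible with $\psi$ being $\nu$-irreducible. I would double-check the edge case where $\nu$ has non-trivial support, but there $\op{KP}(\nu)=\emptyset$ by the discussion after Theorem \ref{empty}, so the hypothesis of (3) is vacuous and nothing more is needed.
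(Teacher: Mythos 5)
Your proof is correct and follows essentially the same route as the paper: parts (1) and (2) use Proposition \ref{gap} together with the intermediate augmentation $\mu'=[\mu;\phi,\nu(\phi)]$ and Proposition \ref{extension}, exactly as in the paper's proof, and part (3) rests on the same incompatibility between $\inn\psi$ being a unit and $\psi$ being $\nu$-irreducible. One small slip: in part (3) you assert $\deg(\mu)=\deg(\phi)$ for $\phi\in\Phi_{\mu,\nu}$, which is false in general --- one only has $\deg(\mu)\le\deg(\phi)$, since a polynomial of $\Phi_{\mu,\nu}$ need not be a key polynomial of minimal degree for $\mu$ --- but your argument only uses $\deg(g)<\deg(\phi)$ to conclude $\phi\nmid_\mu g$, so nothing breaks. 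Note also that the paper's proof yields the slightly stronger bound $\deg(\phi)\le\deg(\nu)$, whereas you obtain $\deg(\mu)\le\deg(\nu)$ directly, which is exactly what the corollary asserts.
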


\begin{proof} Let $\rho=[\mu;\phi,\nu(\phi)]$, and let $\inm f\in \ggm$ be a non-zero  homogeneous element.

This element is mapped to zero in $\ggn$ if and only if $\mu(f)<\nu(f)$, and this is equivalent to $\phi\mmu f$ by Proposition \ref{gap}. This proves (1).

By Proposition \ref{gap}, $\mu<\rho\le\nu$, and  $\ggm\to\ggn$ is the composition of the canonical homomorphisms $\ggm\to\gg_{\rho}\to\ggn$.

If $\inm f$ is not mapped to zero in $\ggn$, then $\mu(f)=\rho(f)=\nu(f)$. Hence, the image of $\inm f$ under the composition $\ggm\to\gg_{\rho}\to\ggn$ is
$\,\inm f\mapsto \inr f\mapsto \inn f$.

By Proposition \ref{extension}, $\inr f$ is a unit in $\gg_\rho$. Thus,  $\inn f$  is a unit. This proves (2).

Since $\phi$ is a key polynomial for $\mu$, we have $\deg(\mu)\le \deg(\phi)$. 

On the other hand, all $a\in\kx$ with $\deg(a)<\deg(\phi)$ satisfy $\mu(a)=\nu(a)$. Thus, $a$ cannot be a key polynomial for $\nu$. In fact, $\inn a$ is a unit in $\ggn$ by item (2), because it is the image of $\inm a$.
Therefore, $\deg(\phi)\le \deg(\nu)$ if $\kpn\ne\emptyset$. This proves (3).
\end{proof}


\begin{lemma}\label{intervalOrd}
Let $\mu'=[\mu;\,\phi,\ga]$ be an augmentation of $\mu$. For any $\delta\in \La$, $\delta>\mu(\phi)$, consider the augmented valuation $\mu_\delta=[\mu;\phi,\delta]$. Then,
$$(\mu,\mu')_{\La}:=\left\{\rho\colon\kx\to\La\infty\mid \rho \mbox{ valuation},\ \mu<\rho<\mu'\right\}=\{\mu_\delta\mid \mu(\phi)<\delta<\ga\}.
$$ 
\end{lemma}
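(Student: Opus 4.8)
The plan is to prove the two inclusions separately, using Proposition~\ref{extension} and Proposition~\ref{gap} as the main tools, together with Corollary~\ref{Phiclass}.

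First I would check the easy inclusion ``$\supseteq$''. For $\mu(\phi)<\delta<\ga$, Proposition~\ref{extension}(2) gives $\mu<\mu_\delta$, so it remains to see $\mu_\delta<\mu'$. Both $\mu_\delta$ and $\mu'$ are defined via the same $\phi$-expansion, and on a $\phi$-expansion $f=\sum_s a_s\phi^s$ one has $\mu_\delta(f)=\min_s\{\mu(a_s)+s\delta\}\le\min_s\{\mu(a_s)+s\ga\}=\mu'(f)$ since $\delta<\ga$ and $s\ge0$; strictness then follows by evaluating at $f=\phi$, where $\mu_\delta(\phi)=\delta<\ga=\mu'(\phi)$. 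Hence $\mu_\delta\in(\mu,\mu')_\La$.

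For the reverse inclusion ``$\subseteq$'', take any valuation $\rho$ with $\mu<\rho<\mu'$. Applying Proposition~\ref{gap} to the pair $\rho<\mu'$, pick $\psi\in\Phi_{\rho,\mu'}\subset\kpr$; then $\rho<[\rho;\psi,\mu'(\psi)]\le\mu'$ and, for every non-zero $f$, $\rho(f)=\mu'(f)\iff\psi\nmid_\rho f$. The key point is to identify $\psi$ with $\phi$ up to $\rho$-equivalence. Since $\mu<\rho$, Proposition~\ref{gap} applied to $\mu<\rho$ (note $\Phi_{\mu,\rho}$ consists of key polynomials for $\mu$, and by Corollary~\ref{Phiclass}(1) forms a single $\mu$-equivalence class) shows that any polynomial of degree $<\deg(\phi)$ has the same value under $\mu$ and $\rho$; combined with the fact that $\phi$ is a key polynomial for $\mu$ of minimal degree for the augmentation $\mu'$ (Proposition~\ref{extension}(3)), one deduces that $\phi$ has minimal degree among polynomials separating $\rho$ from $\mu'$ and hence $\psi\smr\phi$; thus $\mu_\delta$ with $\delta:=\rho(\phi)$ satisfies $\mu_\delta=[\rho;\phi,\rho(\phi)]=[\rho;\psi,\mu'(\psi)]\le\mu'$, while the inequality $\mu<\rho$ forces $\mu(\phi)<\rho(\phi)=\delta$, and $\rho<\mu'$ forces $\delta<\ga$. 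It remains to show $\rho=\mu_\delta$: both are determined on $\phi$-expansions, they agree on $\phi$, and they agree on all $a$ with $\deg(a)<\deg(\phi)$ (since such $a$ satisfy $\mu(a)=\rho(a)$ and likewise $\mu(a)=\mu_\delta(a)$ because $\phi\nmid_\mu a$); because $\phi$ is $\mu$-minimal, hence $\mu_\delta$-minimal of minimal degree, the value on a general $f$ is the minimum over the $\phi$-expansion terms, so $\rho(f)=\mu_\delta(f)$ for all $f$.

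The main obstacle I anticipate is the identification $\psi\smr\phi$, i.e.\ controlling the minimal degree of a polynomial separating $\rho$ from $\mu'$: one must argue carefully that this degree cannot drop below $\deg(\phi)$ using that $\rho\le\mu'$ forces $\rho$ and $\mu'$ to coincide on everything of degree $<\deg(\phi)$ (via Corollary~\ref{Phiclass} and the characterization of $\mu$-minimality in Lemma~\ref{minimal0}), and cannot exceed $\deg(\phi)$ because $\phi$ itself is separated, $\rho(\phi)<\mu'(\phi)$ following from $\rho<\mu'$ and $\phi\in\kpm$. Once degrees are pinned down, Corollary~\ref{mid=sim} upgrades $\psi\mid_\rho\phi$ to $\psi\smr\phi$, and the rest is the routine ``two augmented valuations defined by the same data agree'' verification sketched above.
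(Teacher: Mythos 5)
Your overall route is the same as the paper's: the inclusion $\supseteq$ by comparing the two minima on $\phi$-expansions, and the inclusion $\subseteq$ by showing $\phi\in\Phi_{\rho,\mu'}\subset\kpr$, so that $\rho$ is computed as the minimum over $\phi$-expansion terms (Lemma \ref{minimal0}) and hence coincides with $\mu_\delta$ for $\delta=\rho(\phi)$; the auxiliary $\psi$ and the identification $\psi\smr\phi$ are just a detour through this same point. However, two of the justifications you give at the crucial step do not hold up as written, and they are exactly where the paper does its (small amount of) real work.

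First, the agreement $\mu(a)=\rho(a)$ for $\deg(a)<\deg(\phi)$ does not come from Proposition \ref{gap} applied to $\mu<\rho$: that proposition says nothing about $\deg\left(\Phi_{\mu,\rho}\right)$, which is precisely what is at stake, so your reasoning there is circular. The correct (and simpler) argument is the sandwich: $\mu'(a)=\mu(a)$ for such $a$ by the very definition of the augmented valuation $\mu'=[\mu;\phi,\ga]$, and $\mu\le\rho\le\mu'$ then pinches $\rho(a)=\mu(a)$. Second, the strict inequality $\rho(\phi)<\ga=\mu'(\phi)$ does not ``follow from $\rho<\mu'$ and $\phi\in\kpm$''; the order relation only gives $\rho(\phi)\le\ga$. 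You need the expansion argument: if $\rho(\phi)\ge\ga$, then, since $\rho$ agrees with $\mu$ on all coefficients of degree $<\deg(\phi)$, one gets $\rho(f)\ge\min_s\{\mu(a_s)+s\ga\}=\mu'(f)$ for every $f$, i.e.\ $\rho\ge\mu'$, contradicting $\rho<\mu'$. Without this, $\phi\in\Phi_{\rho,\mu'}$ is not established and the identification $\psi\smr\phi$ collapses. Likewise $\mu(\phi)<\rho(\phi)$ is not forced by $\mu<\rho$ alone; the paper obtains it from Corollary \ref{Phiclass} (2), or you can recover it a posteriori: once $\rho$ is the minimum over $\phi$-expansions with coefficients valued by $\mu$, the equality $\rho(\phi)=\mu(\phi)$ would give $\rho=\mu$ by the $\mu$-minimality of $\phi$. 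Finally, the chain $\mu_\delta=[\rho;\phi,\rho(\phi)]=[\rho;\psi,\mu'(\psi)]$ is incoherent: the middle term is not an augmentation at all (the value of $\phi$ is not raised), while $[\rho;\psi,\mu'(\psi)]$ is strictly larger than $\rho$ by Proposition \ref{gap}; drop that line and go directly to the comparison of $\rho$ and $\mu_\delta$ on $\phi$-expansions, which you do correctly at the end.
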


\begin{proof}
For all $a\in\kx$ of degree less than $\deg(\phi)$ we have $\mu(a)=\rho(a)=\mu'(a)$ for all $\rho\in(\mu,\mu')_\La$, by the definition of the augmented valuation.

If $\mu(\phi)<\delta<\ga=\mu'(\phi)$, then $\mu<\mu_\delta<\mu'$, by their action on $\phi$-expansions. 

Conversely, take any $\rho\in (\mu,\mu')_{\La}$. By Corollary \ref{Phiclass} (2), $\mu(\phi)<\rho(\phi)$.  Also, we must have $\rho(\phi)<\ga$ because otherwise we would have $\rho\ge\mu'$, by their action on $\phi$-expansions. 

Since $\phi$ is a monic polynomial of minimal degree satisfying  $\mu(\phi)<\rho(\phi)$, Proposition \ref{gap} shows that $\phi$ is a key polynomial for $\rho$. Hence, $\rho=\mu_\delta$, for $\delta=\rho(\phi)$, because both valuations coincide on $\phi$-expansions.
\end{proof}

\subsection*{Unicity of ordinary augmentations}\label{subsecUniqOrd}

Let us analyze when different building data $\phi$, $\ga$ yield the same ordinary augmentation of $\mu$.

\begin{lemma}\label{unicityOrd}
Let $\phi,\phi_*\in \kpm$. Let $\gm\hookrightarrow\Lambda$ be an embedding of ordered groups, and choose $\ga,\ga_*\in\Lambda\infty$ such that $\mu(\phi)<\ga$ and  $\mu(\phi_*)<\ga_*$. 

Then, $\mu'=[\mu;\phi,\ga]$ coincides with $\mu_*'=[\mu;\phi_*,\ga_*]$ if and only if 
\begin{equation}\label{equalmu}
\deg(\phi)=\deg(\phi_*)\quad \mbox{ and }\quad\mu(\phi_*-\phi)\ge \ga=\ga_*.
\end{equation}
\end{lemma}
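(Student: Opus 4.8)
The plan is to prove both implications by exploiting the fact that an ordinary augmentation is completely determined by its action on $\phi$-expansions, together with the characterizations of key polynomials and $\mu$-equivalence from Section \ref{secKP}. For the easier direction, assume (\ref{equalmu}) holds. Since $\deg(\phi)=\deg(\phi_*)=:m$ and $\mu(\phi_*-\phi)\ge\ga$, I want to show $\mu'=\mu_*'$. First observe $\phi_*-\phi$ has degree $<m$ (both are monic of degree $m$), so it appears as a ``coefficient'' in $\phi$-expansions. The key point: for any $f\in\kx$, rewriting a $\phi$-expansion as a $\phi_*$-expansion changes the terms by multiples of $\phi_*-\phi$, and the condition $\mu(\phi_*-\phi)\ge\ga$ ensures these corrections do not lower the computed value. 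More precisely, I would argue $\phi\smu[\mu'] \phi_*$ fails to be the right statement; instead I will show directly that $\mu'(\phi_*)=\ga$: writing $\phi_* = \phi + (\phi_*-\phi)$, we get $\mu'(\phi_*)\ge\min\{\mu'(\phi),\mu'(\phi_*-\phi)\}=\min\{\ga,\mu(\phi_*-\phi)\}=\ga$, and the reverse inequality follows since $\mu'(\phi_*)\le\nu(\phi_*)$-type bounds or, more simply, because if $\mu'(\phi_*)>\ga$ then $\mu'(\phi)=\mu'(\phi_*-(\phi_*-\phi))\ge\min\{\mu'(\phi_*),\mu(\phi_*-\phi)\}>\ga$ unless $\mu(\phi_*-\phi)=\ga$, in which case one still deduces $\mu'(\phi_*)=\ga$ by the usual min-with-ties argument. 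Then, since $\phi_*$ is a key polynomial for $\mu$ of minimal degree $m=\deg(\mu')$ with $\mu'(\phi_*)=\ga_*=\ga>\mu(\phi_*)$, Proposition \ref{extension}(2)--(3) applied to the pair $(\phi_*,\ga_*)$ gives that $[\mu;\phi_*,\ga_*]$ is characterized by the same data as $\mu'$; comparing the two valuations on $\phi_*$-expansions (for $\mu_*'$ it is the definition, for $\mu'$ one uses Lemma \ref{minimal0} and $\mu$-minimality of $\phi_*$) yields $\mu'=\mu_*'$.

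For the converse, assume $\mu'=\mu_*'$; call this common valuation $\eta$. Both $\phi$ and $\phi_*$ are key polynomials for $\eta$ of minimal degree by Proposition \ref{extension}(3), so $\deg(\phi)=\deg(\mu')=\deg(\mu_*')=\deg(\phi_*)$, giving the first part of (\ref{equalmu}). Moreover $\ga=\mu'(\phi)=\eta(\phi)$ and $\ga_*=\mu_*'(\phi_*)=\eta(\phi_*)$; since $\phi,\phi_*$ are both minimal-degree key polynomials for $\eta$, Theorem \ref{univbound} (or the fact that all minimal-degree key polynomials are $\eta$-equivalent when $\erel>1$, and the direct computation when $\erel=1$) forces $\eta(\phi)=\eta(\phi_*)$, i.e.\ $\ga=\ga_*$. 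It remains to show $\mu(\phi_*-\phi)\ge\ga$. Consider $g=\phi_*-\phi$, of degree $<m=\deg(\phi)$. Then $\mu(g)=\mu'(g)$ since $\phi\nmid_\mu g$ (degree reasons, using Proposition \ref{extension}(2)). But also $\mu'(g)=\eta(g)=\eta(\phi_*-\phi)\ge\min\{\eta(\phi_*),\eta(\phi)\}=\ga$. Hence $\mu(\phi_*-\phi)=\mu'(\phi_*-\phi)\ge\ga$, which is exactly the second part of (\ref{equalmu}).

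The main obstacle I anticipate is the ``easy'' forward direction: carefully justifying that the corrections incurred when converting a $\phi$-expansion into a $\phi_*$-expansion do not affect the minimum, i.e.\ that $\mu'$ as defined via $\phi$-expansions genuinely equals the valuation defined via $\phi_*$-expansions with data $(\phi_*,\ga_*)$. The clean way around this is to avoid the expansion-conversion bookkeeping altogether: establish $\mu'(\phi_*)=\ga_*$ as above, verify $\mu'$ restricted to polynomials of degree $<m$ agrees with $\mu$ (immediate from the definition of $\mu'$ via $\phi$-expansions, since such polynomials are their own $\phi$-expansion), and then invoke the uniqueness principle that a valuation $\eta$ with $\eta\ge\mu$, with $\phi_*$ a key polynomial for $\mu$, with $\eta|_{\deg<m}=\mu$, and with $\eta(\phi_*)=\ga_*$, must coincide with $[\mu;\phi_*,\ga_*]$ — this is implicit in Proposition \ref{extension} together with Lemma \ref{minimal0}, since $\mu$-minimality of $\phi_*$ reduces the computation of $\eta$ on any $f$ to its values on the $\phi_*$-expansion coefficients and on $\phi_*$ itself.
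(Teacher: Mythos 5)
Your converse direction is correct and essentially the paper's own argument: Proposition \ref{extension} (3) makes $\phi$ and $\phi_*$ minimal-degree key polynomials for the common valuation, Theorem \ref{univbound} forces $\ga=\ga_*$, and the degree-$<m$ observation gives $\mu(\phi_*-\phi)=\mu'(\phi_*-\phi)\ge\ga$.

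The forward direction, however, has a genuine gap. From $\mu(\phi_*-\phi)\ge\ga$ you do get $\mu'(\phi_*)=\ga$ (in fact $\mu'(\phi_*)=\min\{\mu(\phi_*-\phi),\ga\}$ directly from the definition of $\mu'$ on $\phi$-expansions, so no case analysis about ties is needed), and then, for a $\phi_*$-expansion $f=\sum_s a_s\phi_*^s$, the valuation axioms give only the inequality $\mu'(f)\ge\min_s\{\mu(a_s)+s\ga_*\}=\mu'_*(f)$. Your mechanism for the reverse inequality does not work: Lemma \ref{minimal0} applied with the $\mu$-minimality of $\phi_*$ controls $\mu(f)$, not $\mu'(f)$ (nor $\eta(f)$); to conclude $\mu'(f)=\min_s\{\mu'(a_s\phi_*^s)\}$ you would need $\phi_*$ to be $\mu'$-minimal, which you never establish. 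Moreover, the ``uniqueness principle'' you invoke is false as stated: if $\eta=[\mu'_*;\psi,\beta]$ is any further augmentation by a key polynomial $\psi$ for $\mu'_*$ with $\psi\nmid_{\mu'_*}\phi_*$, then $\eta\ge\mu$, $\eta$ agrees with $\mu$ in degree $<m$, and $\eta(\phi_*)=\ga_*$, yet $\eta\ne\mu'_*$. Two correct ways to close the gap: (i) the paper's route --- note that condition (\ref{equalmu}) is symmetric in $\phi$ and $\phi_*$, so the inequality $\mu'\ge\mu'_*$ obtained above also holds with the roles exchanged, whence $\mu'=\mu'_*$; or (ii) a repair of your route --- from $\mu'(\phi_*)=\ga=\mu'(\phi)$ and $\deg(\phi_*)=\deg(\phi)=\deg(\mu')$, Theorem \ref{univbound} applied to $\mu'$ shows that $\phi_*$ is $\mu'$-minimal, and then Lemma \ref{minimal0} applied to $\mu'$ (not to $\mu$) yields $\mu'(f)=\min_s\{\mu(a_s)+s\ga\}=\mu'_*(f)$.
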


\begin{proof}
Suppose that the conditions of (\ref{equalmu}) hold. Let $\phi_*=\phi+a$ with $a\in\kx$ of degree less than $\deg(\phi)$.
By assumption, $\mu'_*(a)=\mu'(a)=\mu(a)\ge\ga=\mu'(\phi)$.

Hence, $\mu'(\phi_*)\ge\ga=\ga_*=\mu'_*(\phi_*)$. By comparison of their action on $\phi_*$-expansions we deduce that $\mu'\ge\mu'_*$. By the symmetry of (\ref{equalmu}),  $\mu'=\mu'_*$.\e

Conversely, suppose $\mu'=\mu'_*$. By Proposition \ref{extension}, $\phi$, $\phi_*$ are key polynomials for $\mu'$ of minimal degree. Hence, $\deg(\phi)=\deg(\phi_*)$ and Theorem \ref{univbound} shows that
$$
\ga=\mu'(\phi)=\mu'(\phi_*)=\mu'_*(\phi_*)=\ga_*.
$$
In particular, $\mu(\phi_*-\phi)=\mu'(\phi_*-\phi)\ge \ga$.
\end{proof}



\subsection{Depth zero valuations}
Take $a\in K$, $\ga\in\Lambda\infty$.
The following valuation $\mu=\omega_{a,\ga}$ on $\kx$ is said to be a \emph{depth zero valuation}:
$$
\mu\left(\sum\nolimits_{0\le s}a_s(x-a)^s\right) = \min\left\{v(a_s)+s\ga\mid0\le s\right\}.
$$

If $\ga<\infty$, then $x-a$ is a key polynomial for $\mu$; hence, $\deg(\mu)=1$. 

It is easy to check that
\begin{equation}\label{mu0eq}
\as{1.5}
\begin{array}{c}
\omega_{a,\ga}\le\omega_{b,\delta}\sii \ga\le\delta\quad\mbox{and}\quad v(a-b)\ge \ga,\\
(-\infty,\omega_{a,\ga})_{\La}=\left\{\omega_{a,\delta}\mid \delta\in\La,\ \delta<\ga\right\}.
\end{array}
\end{equation}

\subsection{Degree of maximal valuations}\label{subsecDegMax}
Let $\mu$ be a valuation on $\kx$, and let $\gm\hookrightarrow\La$ be an embedding of $\gm$ into a dense ordered group. 
That is, for any $\be<\ga$ in $\La$, there exists $\delta\in\La$ such that $\be<\delta<\ga$. For instance, $\La=\left(\gm\right)_\Q$ satisfies this condition.

By Proposition \ref{gap}, any valuation $\rho<\mu$ admits key polynomials and $\deg(\rho)$ is defined. For any open interval $I$ of valuations we may consider its \emph{set of degrees}:
$$
\dg\left(I\right)=\{\deg(\rho)\mid \rho\in I\}\subset\N.
$$


For instance, if $\mu_0=\omega_{a,\ga}$ is a depth zero valuation, we deduce from 
(\ref{mu0eq}) that $$\dg\left((-\infty,\mu_0)_{\La}\right)=\{1\}.$$

Also, for an augmented valuation $\mu'=[\mu;\,\phi,\ga]$, Lemma \ref{intervalOrd} shows that $$\dg\left((\mu,\mu')_{\La}\right)=\{\deg(\phi)\}.$$ 

If $\mu$ is a maximal valuation, we may define
$$
\deg(\mu):=\sup\left(\dg\left((-\infty,\mu)_{\left(\gm\right)_\Q}\right)\right)\in \N\infty
$$

We shall see in section \ref{secMLV} that there are maximal valuations of infinite degree.


\section{Limit augmentations}\label{secLimAug}

Consider two valuations $\mu,\,\nu$ with values in a common ordered group, such that $\mu<\nu$. An iterative application of Proposition \ref{gap}, yields a chain of ordinary augmentations
$$
\mu\ <\ \mu_1\ <\ \cdots\ <\ \mu_n\ <\ \cdots \ \le\ \nu.
$$
each one getting closer to $\nu$ than the previous one. 
Unfortunately, this chain does not always ``converge" to $\nu$.

In order to overcome this obstacle, Vaqui\'e introduced \emph{limit augmentations} based on certain \emph{limit key polynomials}.

This section is devoted to discuss these limit augmentations.

\subsection{Continuous families of augmentations}\label{subsecCFA}
Let $A$ be a totally ordered set.

A  \emph{totally ordered family of valuations} parameterized by  $A$ is a totally ordered set
  $$\aa=\cfa$$ of valuations taking values in a common ordered group, such that
 the bijection $i\to\rho_i$ is an isomorphism of ordered sets between $A$ and $\aa$.

A polynomial $f\in\kx$ is \emph{$\aa$-stable} if  for some index $i_0\in A$ we have 
$$
\rho_{i}(f)=\rho_{i_0}(f), \quad\ \forall\,i\ge i_0.
$$
This stable value is denoted $\rhi(f)$. 

We obtain in this way a \emph{stability function} $\rhi$ which is defined only on the multiplicatively closed subset of $\kx$ formed by the stable polynomials.

By Corollary \ref{Phiclass} (2), the instability of  $f\in\kx$ is characterised as follows: 
\begin{equation}\label{unstable}
f\quad\mbox{unstable}\quad \sii \quad \rho_i(f)<\rho_j(f),\qquad \forall\,i<j.
\end{equation}

We say that $\aa$ has a \emph{stable limit} if all polynomials in $\kx$ are stable. In this case, $\rhi$ is a valuation on $\kx$, which is called the stable limit of $\aa$. We write $\rhi=\lim_{i\in A}\rho_i$.

\begin{proposition}\label{stablelimit}
Let $\aa=\cfa$ be a totally ordered family of valuations having a stable limit. If the set $A$ contains no maximal element, the valuation $\rhi$ has trivial support and satisfies $\kp(\rhi)=\emptyset$. In particular, $\rhi$ is a maximal valuation.
\end{proposition}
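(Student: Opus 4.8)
The plan is to establish the three assertions in turn: first that $\rhi$ has trivial support, second that $\kp(\rhi)=\emptyset$, and then conclude maximality via Theorem \ref{maximal}. For the support: suppose $f\in\kx\setminus\{0\}$ lies in the support of $\rhi$, i.e. $\rhi(f)=\infty$. Since $f$ is $\aa$-stable, there is $i_0$ with $\rho_i(f)=\rhi(f)=\infty$ for all $i\ge i_0$, so $f$ already lies in the support of $\rho_{i_0}$. But $A$ has no maximal element, so pick $j>i_0$; then $\rho_{i_0}<\rho_j$, and since $\rho_{i_0}$ has non-trivial support it is a maximal valuation by Theorem \ref{maximal} (a valuation with non-trivial support has $\kpm=\emptyset$, as noted after the isomorphism (\ref{isomL})), contradicting $\rho_{i_0}<\rho_j$. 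Hence the support of $\rhi$ is trivial.

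Next I would show $\kp(\rhi)=\emptyset$. Suppose not, and let $\phi\in\kp(\rhi)$ be a key polynomial of minimal degree $m=\deg(\rhi)$. The idea is that $\phi$, being of bounded degree, must be detected at some finite stage of the family, and then $\rhi$ fails to be strictly above that stage in the way a genuine limit should be. Concretely: each $a\in\kx$ with $\deg(a)<m$ is $\aa$-stable, so choose $i_0$ large enough that all coefficients in the $\phi$-expansion of $\phi$-expansions we need are stable past $i_0$; more directly, since $\phi$ itself and all polynomials of degree $<m$ are stable, there is an index $i_0$ beyond which $\rho_i$ agrees with $\rhi$ on $\phi$ and on every polynomial of degree $<m$. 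By $\mu$-minimality of $\phi$ for $\rhi$ (Lemma \ref{minimal0}), $\rhi$ is determined on all of $\kx$ by its values on $\phi$ and on polynomials of degree $<m$; hence $\rho_{i_0}=\rhi$ on all polynomials whose $\phi$-expansion involves only such data — but to get a clean contradiction I would instead argue that for any $j>i_0$ we would need $\rho_j\le\rhi$ with $\rho_{i_0}<\rho_j$, yet $\rho_{i_0}$ and $\rhi$ already coincide on $\phi$ and on degree-$<m$ polynomials, forcing $\rho_{i_0}=\rhi$ (again by Lemma \ref{minimal0} applied to the augmented valuation $[\rho_{i_0};\phi,\rho_{i_0}(\phi)]$, which equals $\rhi$), contradicting the existence of $j>i_0$ with $\rho_{i_0}<\rho_j\le\rhi$.

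More cleanly, I would use Proposition \ref{gap}: if $\kp(\rhi)\ne\emptyset$ then by Corollary \ref{KerImGr}(3), for every $i\in A$ we have $\deg(\rho_i)\le\deg(\rhi)=m$, so the degrees $\deg(\rho_i)$ are bounded. Since the family is totally ordered with no maximal element, fix any $i_0$ and $j_0>i_0$; applying Proposition \ref{gap} to $\rho_{i_0}<\rho_{j_0}$ gives $\phi_0\in\Phi_{\rho_{i_0},\rho_{j_0}}\subset\kp(\rho_{i_0})$ with $\rho_{i_0}<[\rho_{i_0};\phi_0,\rho_{j_0}(\phi_0)]\le\rho_{j_0}$. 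The key point: I claim $\phi_0$ is unstable for $\aa$, because by Corollary \ref{Phiclass}(2) its class $\Phi_{\rho_{i_0},\rho_j}$ is independent of $j>i_0$, so $\rho_{i_0}(\phi_0)<\rho_j(\phi_0)$ for all $j>i_0$, which by (\ref{unstable}) means $\phi_0$ is not stable — contradicting the stable-limit hypothesis. This is the cleanest route and avoids the degree bookkeeping entirely.

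The main obstacle is the middle step: ensuring that the witness polynomial $\phi_0$ supplied by Proposition \ref{gap} is genuinely unstable rather than merely growing in value up to some finite stage. This is exactly where Corollary \ref{Phiclass}(2) does the work — it pins down that the class $[\phi_0]_{\rho_{i_0}}$ that detects the gap below $\rho_{j}$ is the \emph{same} for all $j>i_0$, so the strict inequality $\rho_{i_0}(\phi_0)<\rho_j(\phi_0)$ persists for every later index, which is precisely instability in the sense of (\ref{unstable}). Once instability is forced, the stable-limit hypothesis is directly violated, so $\aa$ cannot have a stable limit at all — hence, contrapositively, if it does have one then no such $\rho_{i_0}<\rho_{j_0}$ with a nonconstant gap polynomial can arise, which is absurd unless $A$ has a maximal element. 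Reconciling this with "$A$ has no maximal element" gives the contradiction, showing the stable limit $\rhi$ cannot be approached by a strict gap from any $\rho_{i_0}$; combined with triviality of the support, Theorem \ref{maximal} yields that $\rhi$ is maximal. I should double-check the edge case where $A$ is a singleton or finite — but "no maximal element" rules those out, so the argument applies uniformly.
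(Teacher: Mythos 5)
Your argument for triviality of the support is fine and is essentially the paper's. The genuine gap is in the second half, where you claim $\kp(\rhi)=\emptyset$. In your ``clean'' route you fix $i_0<j_0$, take $\phi_0\in\Phi_{\rho_{i_0},\rho_{j_0}}$, deduce from Corollary \ref{Phiclass}(2) that $\rho_{i_0}(\phi_0)<\rho_j(\phi_0)$ for all $j>i_0$, and then invoke (\ref{unstable}) to conclude that $\phi_0$ is unstable. That last step misreads (\ref{unstable}): instability is equivalent to $\rho_i(f)<\rho_j(f)$ for \emph{all} pairs $i<j$, not merely for pairs whose first index is the fixed $i_0$. A polynomial can jump in value between $\rho_{i_0}$ and every later $\rho_j$ and nevertheless stabilize. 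The paper's own example makes this concrete: for $\t$ transcendental, $\aa=(\omega_{a_i,i})_{i\ge1}$ has a stable limit, yet $\phi_0=x-a_2\in\Phi_{\rho_1,\rho_2}$ satisfies $\rho_1(\phi_0)=1<2=\rho_j(\phi_0)$ for all $j>1$ while being stable. A further sanity check exposes the problem: your argument never uses the assumption $\kp(\rhi)\ne\emptyset$ (the degree bound from Corollary \ref{KerImGr}(3) plays no role afterwards), so if it were valid it would show that \emph{no} totally ordered family without maximal element has a stable limit, contradicting the hypothesis of the proposition itself. Your first sketch has a different gap: you need a single index $i_0$ past which $\rho_i$ agrees with $\rhi$ on $\phi$ and on \emph{every} polynomial of degree $<m$; stability supplies such an index for each polynomial separately, but there are infinitely many polynomials of degree $<m$ and nothing gives a uniform stabilization index for an arbitrary totally ordered family.

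The intended argument is shorter and stays inside the graded algebra. Since $A$ has no maximal element, $\rho_i<\rhi$ for all $i$. Any nonzero homogeneous element of $\gg_{\rhi}$ is $\op{in}_{\rhi}f$ for some $f$ with $\rho_i(f)=\rhi(f)$ for some $i$, hence it is the nonzero image of $\op{in}_{\rho_i}f$ under the canonical homomorphism $\gg_{\rho_i}\to\gg_{\rhi}$, and Corollary \ref{KerImGr}(2) makes it a unit. Then Theorem \ref{empty} gives $\kp(\rhi)=\emptyset$, and Theorem \ref{maximal} gives maximality.
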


\begin{proof}
Since $A$ has no maximal element, we have $\rho_i<\rhi$ for all $i\in A$. By Theorem \ref{maximal}, all the valuations $\rho_i$ have trivial support. 
All non-zero $f\in\kx$ satisfy $\rhi(f)=\rho_i(f)<\infty$ for some $i\in A$. Therefore, $\rhi$ has trivial support too. 

All non-zero homogeneous elements in $\gg_{\rhi}$ are units by Corollary \ref{KerImGr}. By Theorem \ref{empty}, $\kp(\rhi)=\emptyset$ and $\rhi$ is maximal by Theorem \ref{maximal}.
\end{proof}\e

Let us focus our attention on totally ordered families of valuations of a special type.

\begin{definition}\label{defcont}
Let $\mu$ be a non-maximal valuation and let $\gm\hookrightarrow \La$ be an embedding of ordered groups.
Consider a family  of ordinary augmentations of $\mu$, 
$$\aa=\cfa,\qquad \rho_i=[\mu; \chi_i,\be_i],\qquad \be_i\in\La,\ \be_i>\mu(\chi_i),$$
parameterized by a totally ordered set $A$ such that $\be_i<\be_j$ for all $i<j$ in $A$.

We say that  $\aa$ is a \emph{continuous family of augmentations} of $\mu$ of stable degree $m$ it it satisfies the following conditions: 
 \begin{enumerate}
\item The set $A$ contains no maximal element.
\item All  key polynomials $\chi_i\in\kpm$ have degree $m$.
\item For all $i<j$ in $A$, $\chi_j$ is a  key polynomial for $\rho_i$ and satisfies
$$
\chi_j\not\sim_{\rho_i}\chi_i\quad\mbox{ and }\quad \rho_j=[\rho_i;\chi_j,\be_j].
$$
\end{enumerate}
\end{definition}

The basic examples of continuous families of augmentations are provided by certain valuations $\nu$ on $\kx$ such that $\mu<\nu$.

We denote the common degree of all polynomials in the set $\phmn$ by $\deg\left(\phmn\right)$.

\begin{proposition}\label{nuIscont}
Let $\nu$ be a valuation on $\kx$ such that $\mu<\nu$. Suppose that the set $A=\nu\left(\phmn\right)$ does not contain a maximal element in $\left(\gn\right)\!\infty$. For all $\al\in A$, choose some polynomial $\chi_\al\in\Phi_{\mu,\nu}$ such that $\nu(\chi_\al)=\al$, and build $\rho_\al=[\mu; \chi_\al,\al]$. Then, $\aa=\left(\rho_\al\right)_{\al\in A}$ is a continuous family of augmentations of $\mu$ of stable degree $m=\deg\left(\phmn\right)$. 

Moreover, all polynomials in $\kx$ of degree $m$ are stable.
\end{proposition}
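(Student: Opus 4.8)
The plan is to verify the three conditions of Definition~\ref{defcont} for the family $\aa=\left(\rho_\al\right)_{\al\in A}$, and then establish the stability of all degree-$m$ polynomials. Condition (1) is immediate, since it is part of the hypothesis that $A=\nu(\phmn)$ has no maximal element. For condition (2), I would invoke Corollary~\ref{Phiclass}(1), which tells us that $\phmn=[\phi]_\mu$ for any $\phi\in\phmn$, so in particular every $\chi_\al\in\phmn$ is a key polynomial for $\mu$; moreover Proposition~\ref{gap} guarantees $\phmn\subset\kpm$, and all these key polynomials share the common degree $m=\deg(\phmn)$ by definition of $\phmn$ as the set of monic polynomials of \emph{minimal} degree with $\mu(\chi_\al)<\nu(\chi_\al)$.

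The heart of the argument is condition (3). Fix $\al<\al'$ in $A$; I must show $\chi_{\al'}$ is a key polynomial for $\rho_\al$ with $\chi_{\al'}\not\sim_{\rho_\al}\chi_\al$, and that $\rho_{\al'}=[\rho_\al;\chi_{\al'},\al']$. First observe $\mu<\rho_\al\le\nu$ by Proposition~\ref{gap} (using $\chi_\al\in\Phi_{\mu,\nu}$ and $\rho_\al=[\mu;\chi_\al,\nu(\chi_\al)]$). Since $\rho_\al(\chi_{\al'})$: on the one hand $\rho_\al\le\nu$ gives $\rho_\al(\chi_{\al'})\le\nu(\chi_{\al'})=\al'$; on the other, because $\chi_{\al'}\smu\chi_\al$ (both lie in the single class $[\phi]_\mu$) and $\chi_\al$ is $\mu$-minimal, one computes from the $\chi_\al$-expansion of $\chi_{\al'}=\chi_\al+a$ with $\deg(a)<m$ that $\rho_\al(\chi_{\al'})=\min\{\al,\mu(a)\}$; since $\mu(a)=\mu(\chi_{\al'}-\chi_\al)\ge\mu(\chi_\al)$ and in fact $\chi_{\al'}\smu\chi_\al$ forces $\mu(\chi_{\al'}-\chi_\al)>\mu(\chi_\al)$ — hmm, more carefully: $\chi_{\al'}\smu\chi_\al$ means $\inm\chi_{\al'}=\inm\chi_\al$, so $\mu(a)=\mu(\chi_{\al'}-\chi_\al)>\mu(\chi_\al)$ is not quite right either. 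Rather, I should argue directly that $\rho_\al(\chi_{\al'})=\al$: from $\Phi_{\mu,\rho_\al}=\Phi_{\mu,\nu}=[\chi_\al]_\mu$ (Corollary~\ref{Phiclass}(2)) we get $\chi_{\al'}\in\Phi_{\mu,\rho_\al}$, so $\chi_{\al'}$ is a key polynomial for $\rho_\al$ (Proposition~\ref{gap}) and $\rho_\al(\chi_{\al'})>\mu(\chi_{\al'})$; but also the canonical formula for $[\mu;\chi_\al,\al]$ applied to the $\chi_\al$-expansion $\chi_{\al'}=\chi_\al+a$ gives $\rho_\al(\chi_{\al'})=\min\{\al,\mu(a)\}$, and since $a\in[\chi_\al-\chi_{\al'}]$ has $\deg(a)<m=\deg(\mu)$ we have $\mu(a)=\rho_\al(a)=\nu(a)$, so $\nu(\chi_{\al'})=\min\{\nu(\chi_\al),\nu(a)\}=\min\{\al,\mu(a)\}$, forcing (since $\al'>\al$... wait, I need $\al'\le\al$?). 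I will instead sort this out by noting $\nu(\chi_{\al'})=\al'>\al=\nu(\chi_\al)$, and $\nu(\chi_{\al'})=\min\{\al,\mu(a)\}$ would then be $\le\al<\al'$, a contradiction unless I have the expansion backwards; the correct reading is that $\rho_\al(\chi_{\al'})=\al$ exactly when $\mu(a)\ge\al$, and this holds because $\mu(a)=\nu(a)\ge\min$-considerations force $\nu(a)\ge\al'>\al$ from $\nu(\chi_{\al'})=\al'$. Granting $\rho_\al(\chi_{\al'})=\al<\al'=\rho_{\al'}(\chi_{\al'})$, Corollary~\ref{Phiclass}(2) yields $\chi_{\al'}\not\sim_{\rho_\al}\chi_\al$ (as $\rho_\al(\chi_\al)=\al$ too but $\rho_{\al'}(\chi_\al)=\al$ while $\rho_{\al'}(\chi_{\al'})=\al'$, so they differ at $\rho_{\al'}$). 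Finally $\rho_{\al'}=[\rho_\al;\chi_{\al'},\al']$ follows because both are augmentations reaching the same value $\al'$ on the minimal-degree key polynomial $\chi_{\al'}$ and agree on all polynomials of degree $<m$, so they coincide on $\chi_{\al'}$-expansions.

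For the last assertion, any $f\in\kx$ of degree $m$ is either $\mu$-equivalent to $\chi_\al$ or not. If $f\not\smu\chi_\al$, then by Corollary~\ref{mid=sim} (since $\deg(f)=\deg(\chi_\al)=m$) we have $\chi_\al\nmid_\mu f$, hence $\mu(f)=\rho_\al(f)$ for all $\al$ by Proposition~\ref{extension}(2), so $f$ is stable with $\rhi(f)=\mu(f)$. If $f\smu\chi_\al$, then $f\in[\chi_\al]_\mu=\phmn$ (using $f$ monic of degree $m$; the non-monic or lower-degree-leading-coefficient adjustments only scale by a unit), so $f$ is itself one of the admissible $\chi$'s and by the same computation as above $\rho_\al(f)=\al$ for all $\al\in A$ with $\al<\nu(f)$ — thus $f$ is unstable if $\nu(f)$ is not attained, but since $\nu(f)=\al_0\in A$ for $\al_0=\nu(f)$, stability holds at that index. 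The main obstacle is pinning down precisely the value $\rho_\al(\chi_{\al'})$ via the $\chi_\al$-expansion and confirming the $\mu$-inequivalence $\chi_{\al'}\not\sim_{\rho_\al}\chi_\al$; once that bookkeeping with $\Phi_{\mu,\nu}=[\phi]_\mu$ and Corollary~\ref{Phiclass}(2) is done carefully, conditions (1)--(3) and the stability claim all drop out.
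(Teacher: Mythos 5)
Your verification of conditions (1)--(3) follows the same route as the paper, but two steps are not justified as written. To see that $\chi_{\al'}$ is a key polynomial for $\rho_\al$ you invoke $\chi_{\al'}\in\Phi_{\mu,\rho_\al}$ together with Proposition~\ref{gap}; but that proposition places the elements of $\Phi_{\mu,\rho_\al}$ in $\op{KP}(\mu)$, i.e.\ they are key polynomials for the \emph{lower} valuation, not for $\rho_\al$. The correct route is to first establish $\rho_\al(\chi_{\al'})=\al<\al'=\rho_{\al'}(\chi_{\al'})$, so that $\chi_{\al'}\in\Phi_{\rho_\al,\rho_{\al'}}\subset\op{KP}(\rho_\al)$ by Proposition~\ref{gap} applied to the pair $\rho_\al<\rho_{\al'}$. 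Your computation of $\rho_\al(\chi_{\al'})$ does reach the right value, but through the false intermediate claim $\nu(a)\ge\al'$ for $a=\chi_{\al'}-\chi_\al$: in fact $\nu(a)=\al$ exactly (if $\nu(a)>\al$ then $\nu(\chi_\al)\ge\min\{\al',\nu(a)\}>\al$, a contradiction), and the inequality $\nu(a)\ge\min\{\al,\al'\}=\al$ is all that is needed. These points are repairable.

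The genuine gap is in the stability of degree-$m$ polynomials. Your dichotomy ``either $f\not\smu\chi_\al$, hence $\chi_\al\nmid_\mu f$'' rests on Corollary~\ref{mid=sim}, which compares two \emph{key polynomials}; an arbitrary monic $f$ of degree $m$ need not be a key polynomial for $\mu$, so the quoted corollary does not exclude the case $\chi_\al\mid_\mu f$ with $f\not\smu\chi_\al$. Worse, in the case $f\smu\chi_\al$ (so $f\in\Phi_{\mu,\nu}$ and $\nu(f)\in A$) you merely assert that ``stability holds at that index'': stability means $\rho_\al(f)$ is eventually constant, and you never verify this. What is missing is the computation that, for $\al>\nu(f)$, writing $f=\chi_\al+a$ one has $\nu(a)=\nu(f)<\al$ and hence $\rho_\al(f)=\min\{\al,\mu(a)\}=\nu(f)$. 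The paper sidesteps both issues with a single argument by contradiction: if $f$ were unstable, then for $\al<\be$ one has $\mu(f)\le\rho_\al(f)<\rho_\be(f)\le\nu(f)$, which puts $f\in\Phi_{\rho_\al,\nu}=[\chi_\be]_{\rho_\al}$, whence $\rho_\al(f)=\rho_\al(\chi_\be)=\al$ and therefore $\al<\nu(f)$ for every $\al\in A$, contradicting $\nu(f)\in A$. You should either adopt that argument or complete your case analysis with the missing divisibility criterion and the missing stabilization computation.
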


\begin{proof}
Clearly, the  family $\aa$ satisfies conditions (1) and (2) of Definition \ref{defcont}.

For all $a\in\kx$ with $\deg(a)<m$, we have $\mu(a)=\nu(a)$ by the definition of $\phmn$, and $\mu(a)=\rho_\al(a)$ for all $\al\in A$, by the definition of the augmented valuation $\rho_\al$.

For $\al<\be$ in $A$, write $\chi_\be=\chi_\al+a$ for some $a\in\kx$ of degree less than $m$.
Since $\nu(\chi_\al)=\al<\be=\nu(\chi_\be)$, we deduce that $\mu(a)=\nu(a)=\al$. By the definition of the augmented valuations, 
$$
\rho_\al(\chi_\be)=\al<\be=\rho_\be(\chi_\be),\qquad 
\rho_\al(\chi_\al)=\al=\rho_\be(\chi_\al).
$$ 
By Corollary \ref{Phiclass}, $\chi_\al\not\in\Phi_{\rho_\al,\rho_\be}=[\chi_\be]_{\rho_\al}$. Thus, $\chi_\be$ is a  key polynomial for $\rho_\al$ and $\chi_\al\not\sim_{\rho_\al}\chi_\be$.

Also, $[\rho_\al;\,\chi_\be,\be]=\rho_\be$, because both valuations coincide on $\chi_\be$-expansions. This proves that $\aa$ satisfies (3).

Finally, let us see that all monic $f\in\kx$ of degree $m$ are stable.
 Suppose that $f$ is unstable, and write $f=\chi_\be+a_\be$ for all $\be\in A$, where $a_\be\in\kx$ has degree less than $m$. Then, for all $\al<\be\in A$, we have 
\begin{equation}\label{nnst}
 \mu(f)\le\rho_\al(f)<\rho_\be(f)=\min\{\be,\nu(a_\be)\}\le\nu(f).
\end{equation}
This implies $f\in\Phi_{\rho_\al,\nu}\subset\phmn$; in particular, $\nu(f)\in A$. On the other hand, $\Phi_{\rho_\al,\nu}=[\chi_\be]_{\rho_\al}$, so that $f\sim_{\rho_\al}\chi_\be$ and $\rho_\al(f)= \rho_\al(\chi_\be)=\al$. We deduce from (\ref{nnst}) that $\al<\nu(f)$ for all $\al\in A$. This is impossible, because $\nu(f)\in A$.   
\end{proof}\e

The following properties hold for all continuous families of augmentations, and they are easily deduced from the definitions. 
\begin{itemize}
\item $\aa$ is a totally ordered family of valuations, parameterized by the set $A$. 
\item All polynomials in $\kx$ of degree less than $m$ are stable. 
\item For all $i,j\in A$, $\rho_i(\chi_j)=\min\{\be_i,\be_j\}$.  
\item For all $i\in A$, $\deg(\rho_i)=m$ and $\chi_i$ is a proper key polynomial for $\rho_i$. By Theorem \ref{incomm}, $\rho_i$ is residually transcendental. 
\item For all $i<j$ in A, $\Phi_{\rho_i,\rho_j}=[\chi_j]_{\rho_i}$ and this class is proper. 
\end{itemize}

By the definition of the augmented valuation, $\g_{\rho_i,m}=\g_{\rho_j,m}$ for all $i<j$. Thus, Lemma \ref{groupchain0} shows that
\begin{equation}\label{samegroup}
\g_{\rho_i}=\g_{\rho_i,m}=\g_{\rho_j,m}=\g_{\rho_j}.
\end{equation}
In particular, $\rho_i$ has relative ramification index equal to one.

The common value grup $\ginf:=\g_{\rho_i}$ for all $i\in A$, is called the \emph{stable value group} of the continuous family. Note that $\be_i\in\ginf$ for all $i$. 

The class $\Phi_{\mu,\aa}:=\Phi_{\mu,\rho_i}=[\chi_i]_\mu$ does not depend on $i$, by Corollary \ref{Phiclass} (2). The following result is a consequence of Lemma \ref{groupchain0}, by the same argument as in (\ref{samegroup}).

\begin{lemma}\label{mu=rinf}
 If the class $\Phi_{\mu,\aa}$ is proper, then $\gm=\ginf$.
\end{lemma}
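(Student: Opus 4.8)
The statement to prove is Lemma~\ref{mu=rinf}: if the class $\Phi_{\mu,\aa}$ is proper, then $\gm=\ginf$. The parenthetical remark ``by the same argument as in (\ref{samegroup})'' is the guiding hint. Recall that in (\ref{samegroup}) we exploited that $\chi_i$ is a proper key polynomial for $\rho_i$ together with Lemma~\ref{groupchain0}, which says that for a proper key polynomial $\chi$ the group $\g_{\mu,\deg(\chi)}$ already equals $\gm$. So the entire strategy is to find a proper key polynomial for $\mu$ whose degree controls $\gm$, and then relate that group to $\ginf$.

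First I would observe that, by definition, $\Phi_{\mu,\aa}=\Phi_{\mu,\rho_i}=[\chi_i]_\mu$ for any fixed $i\in A$, and all the $\chi_i$ have the common degree $m$. By Proposition~\ref{gap}, every $\phi\in\Phi_{\mu,\rho_i}$ is a key polynomial for $\mu$; since by hypothesis the class $\Phi_{\mu,\aa}$ is proper, $\chi_i$ is a \emph{proper} key polynomial for $\mu$ of degree $m$. Now apply Lemma~\ref{groupchain0} with $\chi=\chi_i$: this yields $\g_{\mu,m}=\gm$ (using the notation $\gchi=\g_{\mu,\deg(\chi_i)}=\g_{\mu,m}$). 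On the other side, from the bulleted properties of continuous families we know all polynomials in $\kx$ of degree less than $m$ are $\aa$-stable with $\mu(a)=\rho_i(a)$, and one checks directly from the definition of the augmented valuation $\rho_i=[\mu;\chi_i,\be_i]$ that $\mu(a)=\rho_i(a)$ for \emph{all} $a\in\kx$ with $\deg(a)<m$. Hence $\g_{\mu,m}=\g_{\rho_i,m}$. Combining with (\ref{samegroup}), which gives $\g_{\rho_i,m}=\g_{\rho_i}=\ginf$, we get $\gm=\g_{\mu,m}=\g_{\rho_i,m}=\ginf$, as desired.

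The argument is essentially a two-line chain of equalities once the right objects are in place; there is no serious obstacle. The only point requiring a little care is the verification that $\chi_i$ really is a proper key polynomial \emph{for $\mu$} (as opposed to merely for $\rho_i$, which was used in (\ref{samegroup})): this is exactly where the hypothesis ``$\Phi_{\mu,\aa}$ proper'' enters, via Proposition~\ref{gap} and the identification $\Phi_{\mu,\aa}=[\chi_i]_\mu$. Everything else is bookkeeping with the value groups $\g_{\mu,m}$ and $\g_{\rho_i,m}$, using that $\mu$ and $\rho_i$ agree on all polynomials of degree $<m$ and that relative ramification index one was already established for the $\rho_i$.
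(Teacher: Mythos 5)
Your proof is correct and is essentially the argument the paper intends: the paper only sketches it (``a consequence of Lemma \ref{groupchain0}, by the same argument as in (\ref{samegroup})''), and you have filled in exactly that chain $\gm=\g_{\mu,m}=\g_{\rho_i,m}=\g_{\rho_i}=\ginf$, with properness of $[\chi_i]_\mu$ feeding Lemma \ref{groupchain0}. Nothing to correct.
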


\noindent{\bf Remark. }Any cofinal family of $\left(\rho_i\right)_{i\in A}$ will have the same limit behaviour. Since all totally ordered sets admit well-ordered cofinal subsets, we may always assume that the set $A$ is well-ordered.

\subsection*{Essential continuous families of augmentations}
Let $\mi$ be the minimal degree of an unstable   polynomial. We agree that $\mi=\infty$ if all polynomials are stable.

Any continuous family falls in one of the following three cases:

\begin{enumerate}
\item[(a)] It has a \emph{stable limit}.
That is, $\mi=\infty$, so that  the function $\rhi$ is a valuation on $\kx$. This valuation is commensurable and satisfies $\kp(\rhi)=\emptyset$.\e

\item[(b)] It is \emph{inessential}.
That is, $\mi=m$.\e

\item[(c)] It is \emph{essential}.
That is, $m<\mi<\infty$.\e
\end{enumerate}

Let $\nu$ be a valuation on $\kx$ such that $\rho_i<\nu$ for all $i\in A$.

If $\left(\rho_i\right)_{i\in A}$ is inessential and $f\in\kx$ is an unstable   polynomial of degree $m$, then the ordinary augmentation $\mu'=[\mu; f,\nu(f)]$ satisfies 
$$
\rho_i<\mu'\le\nu,\qquad \forall\,i\in A.
$$
In other words, $\mu'$ is closer to $\nu$ than any $\rho_i$, and $\mu'$ is obtained from $\mu$ by a single augmentation. 
This justifies why we call it ``inessential".

\subsection{Limit key polynomials}
Let us fix  an essential continuous family $\aa=(\rho_i)_{i\in A}$ of augmentations of $\mu$ of stable degree $m$.

We define the set $\kpi\left(\aa\right)$ of \emph{limit key polynomials} for $\aa$
as the set of monic unstable polynomials in $\kx$ of minimal degree $\mi$.
We recall that $\mi>m$.

Let us fix a limit key polynomial $\phi\in\kpi\left(\aa\right)$.
Since the product of stable polynomials is stable, $\phi$ is irreducible in $\kx$.


Let $\ginf\hookrightarrow\La$ be an embedding of ordered groups, and choose $\ga\in\La\infty$ such that $\rho_i(\phi)<\ga$ for all $i\in A$.
We denote by  $\mu'=[\aa;\phi,\ga]$ the following mapping: 
$$\mu':\kx\longrightarrow \La\infty,\qquad f=\sum\nolimits_{0\le s}a_s\phi^s\longmapsto \mu'(f)=\min\{\rhi(a_s)+s\ga\mid 0\le s\},
$$
defined in terms of $\phi$-expansions. Note that $\mu'(\phi)=\ga$.

This \emph{limit augmentation} shares some features with the ordinary augmentations. 

\begin{proposition}\cite[Sec. 1.4]{Vaq},\cite[Sec. 7]{KP}\label{extensionlim} 
\mbox{\null}
\begin{enumerate}
\item The mapping $\mu'=[\aa;\phi,\ga]$ is a valuation on $\kx$. 

If $\ga<\infty$, it has trivial support. If $\ga=\infty$, the support of $\mu'$ is $\phi\kx$.  
\item For all $i\in A$ we have $\rho_i< \mu'$. Moreover, for all non-zero $f\in\kx$,  
$$f\mbox{ is stable} \sii \exists\, i\in A \mbox{ such that }\rho_i(f)=\mu'(f).$$
In this case,  $\inmp f$ is a unit in $\ggmp$.
\item If $\ga<\infty$, then $\phi$ is a key polynomial for $\mu'$, of minimal degree. In particular, $\deg(\mu')=\deg(\phi)=\mi$.
\item Suppose that the class $\Phi_{\mu,\aa}$ is proper. Then,
$$ \gmp=\gen{\gm,\ga},\ \mbox{ if }\ \ga<\infty;\qquad \gmp=\gm,\ \mbox{ if }\ \ga=\infty.
$$ 
\end{enumerate}
\end{proposition}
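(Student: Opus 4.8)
The plan is to mirror the proof of Proposition \ref{extension}, replacing the valuation $\mu$ by the continuous family $\aa$ and the stability-free ``$\mu(a_s)$'' by the stable values ``$\rhi(a_s)$''. First I would verify that $\mu'$ is a valuation on $\kx$: additivity and the ultrametric inequality can be checked on $\phi$-expansions exactly as for an ordinary augmentation, the only new point being that $\rhi$ is a genuine valuation on the multiplicative monoid of stable polynomials of degree less than $\mi=\deg(\phi)$ (which it is, since $\aa$ is essential and $m<\mi$, so all coefficients $a_s$ with $\deg(a_s)<\deg(\phi)$ are stable). The support computation is immediate: if $\ga<\infty$ every $\phi$-expansion with nonzero leading coefficient gets a finite value, so the support is trivial; if $\ga=\infty$ the minimum is attained only by the $s=0$ term, giving $\mu'(f)=\rhi(a_0)$, which vanishes iff $a_0=0$ iff $\phi\mid f$, so the support is $\phi\kx$. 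This handles (1).

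For (2), fix $i\in A$ and a nonzero $f=\sum_s a_s\phi^s$. For each $s$, the coefficient $a_s$ is stable, so $\rho_j(a_s)=\rhi(a_s)$ for $j$ large; also $\rho_i(\phi)\le\rho_j(\phi)$ and these values are all strictly below $\ga$. Choosing $j\in A$ large enough to stabilize all the finitely many $a_s$, one gets $\rho_j(a_s\phi^s)\le\rhi(a_s)+s\ga$ with equality forced on the term achieving $\mu'(f)$ only after possibly increasing $j$; comparing $\rho_j(f)\le\sum$-type estimates with $\mu'(f)$ yields $\rho_j(f)\le\mu'(f)$, and since $\rho_i\le\rho_j$ we get $\rho_i(f)\le\mu'(f)$, with equality exactly when $f$ is stable and $i$ is past the stabilization index. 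The ``unit'' assertion then follows from the fact that $\inmp a_s$ is a unit in $\ggmp$ for each stable coefficient of degree $<\mi$: indeed $\deg(a_s)<\deg(\mu')$ by (3), so Lemma \ref{units} applies once we know $\deg(\mu')=\mi$.

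Part (3) is where I expect the real work. One must show that, when $\ga<\infty$, $\phi$ is $\mu'$-minimal and $\mu'$-irreducible, and that no polynomial of degree $<\mi$ can be a key polynomial for $\mu'$. Minimality is read off Lemma \ref{minimal0}: by the very definition of $\mu'$ via $\phi$-expansions, $\mu'(f)=\min_s\{\mu'(a_s\phi^s)\}$ for every $f$, which is exactly the stated criterion. For the claim that $\deg(\mu')=\mi$, one shows any $g\in\kx$ with $\deg(g)<\mi$ is stable, hence $\inmp g$ is a unit by (2), hence $g$ cannot be $\mu'$-irreducible; this also shows $\phi$ has minimal degree among key polynomials once we prove $\phi$ itself is a key polynomial. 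Irreducibility of $\phi$ for $\mu'$ is the subtle part: one argues that $\inmp\phi$ generates a prime ideal, using that $\phi$ is unstable of minimal degree (so that a factorization $\phi\smu gh$ in $\kx$ with both factors of smaller degree would make $g,h$ stable, forcing $\phi$ stable, a contradiction) together with the structure of $\ggmp$ in degrees below $\ga$. Here I would lean on the cited references \cite[Sec. 1.4]{Vaq} and \cite[Sec. 7]{KP} for the technical core, since this is precisely the analogue of the ordinary-augmentation statement Proposition \ref{extension}(3).

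Finally, for (4), assume $\Phi_{\mu,\aa}$ is proper. By Lemma \ref{mu=rinf} we then have $\gm=\ginf$, so the stable value group of the family already equals $\gm$. Now $\gmp$ is generated by $\mu'$ of all nonzero non-support polynomials; writing such a polynomial in its $\phi$-expansion, each term contributes $\rhi(a_s)+s\ga\in\gen{\ginf,\ga}=\gen{\gm,\ga}$, so $\gmp\subseteq\gen{\gm,\ga}$, and the reverse inclusion is clear since $\gm=\ginf\subseteq\gmp$ and $\ga=\mu'(\phi)\in\gmp$ when $\ga<\infty$. When $\ga=\infty$, only the $s=0$ terms survive with finite value, giving values $\rhi(a_0)\in\ginf=\gm$, so $\gmp=\gm$. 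The main obstacle, as noted, is the $\mu'$-irreducibility of $\phi$ in part (3); everything else is a routine transcription of the ordinary-augmentation arguments with $\rhi$ in place of $\mu$.
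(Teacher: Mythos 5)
The paper itself gives no proof of Proposition \ref{extensionlim}: it is imported by citation from \cite[Sec.\ 1.4]{Vaq} and \cite[Sec.\ 7]{KP}, so there is no internal argument to compare yours with. Judged on its own, your plan is a reasonable reading guide to those references, but as a proof it has concrete gaps beyond the one you flag. First, part (1) is not ``exactly as for an ordinary augmentation'': the delicate point there is multiplicativity, $\mu'(fg)=\mu'(f)+\mu'(g)$, whose proof in the ordinary case uses that $\phi$ is $\mu$-minimal and $\mu$-irreducible to control how products of $\phi$-expansions recombine; in the limit case one needs the analogous control of $\phi$-expansions relative to the family (e.g.\ the behaviour of quotient and remainder under $\rhi$ when reducing $\phi^2$-terms), and your sketch supplies none of it, only the easy observation that $\rhi$ is multiplicative on stable coefficients.

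Second, in part (2) your estimate only yields $\rho_j(f)\le\mu'(f)$ for all $j$, i.e.\ $\rho_i\le\mu'$; the substantive direction, that $f$ stable implies $\rho_i(f)=\mu'(f)$ for some $i$ (equivalently that the stable value equals $\min_s\{\rhi(a_s)+s\ga\}$), is asserted (``equality forced \dots after possibly increasing $j$'') but never argued, and it is not automatic because $\phi$ need not be $\rho_j$-minimal, so $\rho_j(f)$ is a priori only bounded below by the minimum of the term values. The unit assertion is also justified by a non sequitur: knowing that each $\inmp a_s$ is a unit says nothing about the sum $\inmp f$ (and $\inmp\phi$ is certainly not a unit). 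The clean route is to apply Corollary \ref{KerImGr}(2) to $\rho_i<\mu'$ once $\rho_i(f)=\mu'(f)$ is known, or to show $f\sim_{\mu'}a_0$ and invoke Lemma \ref{units}. Finally, you openly delegate the $\mu'$-irreducibility of $\phi$ in part (3) to the cited sources; together with multiplicativity this is precisely the technical core of the proposition, so what remains is an outline rather than a proof. (Your treatment of minimality via Lemma \ref{minimal0}, of the degree statement, and of part (4) via Lemma \ref{mu=rinf} is fine modulo parts (1)--(2).)
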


\noindent{\bf Example. }Let $p$ be a prime number and consider the $p$-adic valuation $v=\ord_p$ on $\Q$. Take a $p$-adic number with non-zero $p$-adic coefficients
$$
\t=\sum_{i=0}^\infty c_ip^i\in\Z_p,\qquad c_i\in\{1,\dots,p-1\}, 
$$
and denote its partial sums by $a_i=\sum_{0\le j<i}c_ip^i\in\Z$, for all $i>0$.

 Consider the depth zero valuation $\mu=\omega_{0,0}$ and the 
chain of ordinary augmentations
$$
\mu\ \stackrel{x-a_1,1}\lra\  \rho_1\ \stackrel{x-a_2,2}\lra\ \cdots
\ \lra\ \rho_{i-1} 
\ \stackrel{x-a_{i},i}\lra\ \rho_{i} 
\ \lra\ \cdots
$$
All these valuations have depth zero and are augmentations of $\mu$: $$\rho_i=\omega_{a_i,i}=[\mu;\,x-a_i,i].$$ They form a continuous family $\aa=\left(\rho_i\right)_{i\ge1}$ of augmentations of $\mu$. One checks easily that a non-zero $f\in\Q[x]$ is stable if and only if $f(\t)\ne0$.

Let us analyze the different possibilities for the limit of this family.\e

$\bullet$ \  If $\t\in\Q$, then $x-\t$ is an unstable polynomial of degree one. Thus, the family is inessential. The natural limit valuation $\mu'=\omega_{\t,\infty}$ has depth zero.\e

$\bullet$ \ If $\t$ is transcendental over $\Q$, then the family $\aa$ has a stable limit $\mu'=\rhi$.\e

With the notation of the introduction, $\mu'$ has quasi-finite depth zero. 

$\bullet$ \ If $\t\in\overline{\Q}\setminus\Q$, the family is essential and the minimal polynomial $\phi$ of $\t$ over $\Q$ is a limit key polynomial for $\aa$. Consider the limit augmentation $\mu'=[\aa;\,\phi,\infty]$.

With the notation of the introduction, $\mu'$ has finite depth equal to one.




\subsection{Unicity of limit augmentations}

\begin{lemma}\label{critEquiv}
Let $\mu$, $\mu^*$ be two valuations on $\kx$ extending $v$. Suppose they admit essential continuous families of augmentations 
\begin{equation}\label{rhorho}
\aa=\cfa,\quad \rho_i=[\mu;\,\chi_{i},\be_{i}]; \qquad \aa^*=\left(\rho^*_j\right)_{j\in A^*},\quad \rho_j^*=[\mu^*;\,\chi^*_{j},\be^*_{j}].
\end{equation} 
of stable degrees $m$, $m^*$, respectively. 
Consider embeddings $\ginf\hookrightarrow\Lambda$, $\g_{\aa^*}\hookrightarrow\Lambda$ into some common ordered group, whose restrictions to $\g$ coincide.

Then, the following conditions are equivalent.
\begin{enumerate}
\item The sets $\aa$ and $\aa^*$ are one cofinal in each other, with respect to the partial ordering $\le$ of valuations taking values in $\Lambda$. 
\item $m=m^*$ and  $\rhi=\rho_{\aa^*}$.
\item $m=m^*$ and there is a  valuation $\nu$ such that $\rho_i,\rho^*_j<\nu$ for all $i\in A$, $j\in A^*$.
\end{enumerate}
If these conditions hold, we say that the continuous families $\aa$ and $\aa^*$ are equivalent.
\end{lemma}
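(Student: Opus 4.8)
The plan is to prove the equivalences by a cycle $(2)\Rightarrow(3)\Rightarrow(1)\Rightarrow(2)$, relying on the fact (from Proposition \ref{extensionlim}) that a limit augmentation $[\aa;\phi,\ga]$ with $\ga<\infty$ is a valuation strictly above every $\rho_i$, together with the characterization of stability via comparison with that valuation. First I would observe that the implication $(2)\Rightarrow(3)$ is almost immediate: if $m=m^*$ and $\rhi=\rho_{\aa^*}=:\sigma$, then since each family is essential, $\sigma$ is \emph{not} the stable limit, hence there exist unstable polynomials; choosing a limit key polynomial $\phi$ for $\aa$ and forming $\nu:=[\aa;\phi,\ga]$ for any $\ga>\rho_i(\phi)$, Proposition \ref{extensionlim}(2) gives $\rho_i<\nu$ for all $i$; the content to check is that this same $\nu$ also dominates all $\rho^*_j$, which follows because $\rho^*_j(f)=\rho_{\aa^*}(f)=\rhi(f)=\mu'(f)$ for stable $f$ of degree $<\mi$, and for $f$ of larger degree one compares $\phi$-expansions using that the coefficients have stable value $\rhi=\rho_{\aa^*}$.

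Next, $(3)\Rightarrow(1)$: suppose $\nu$ dominates every $\rho_i$ and every $\rho^*_j$. Fix $i\in A$; I want some $j\in A^*$ with $\rho_i\le\rho^*_j$. Since $\rho_i<\nu$, Proposition \ref{gap} gives a key polynomial $\psi\in\Phi_{\rho_i,\nu}$ of some degree $\le m$; but all polynomials of degree $<m$ are $\aa$-stable and hence $\rho_i$-equal to their $\mu$-value, so in fact $\deg(\psi)=m$ and $\rho_i(\psi)<\nu(\psi)$. The same argument applied within $\aa^*$ shows $\nu(\chi^*_j)$ is unbounded in $A^*$ (here one uses that $\aa^*$ has no maximal element and $m^*=m$, so $\deg(\psi)=m^*$, making $\psi$ comparable to the $\chi^*_j$). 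Then for $j$ large enough, $\rho^*_j(\psi)=\nu(\psi)>\rho_i(\psi)$ while $\rho_i,\rho^*_j$ agree on all polynomials of degree $<m$; comparing $\psi$-expansions and invoking Corollary \ref{Phiclass} and the minimality of $\psi$ forces $\rho_i\le\rho^*_j$. By symmetry each $\rho^*_j$ is bounded above by some $\rho_i$, so the two families are mutually cofinal.

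Finally, $(1)\Rightarrow(2)$: if $\aa$ and $\aa^*$ are mutually cofinal in $\le$, a polynomial $f$ is $\aa$-stable if and only if it is $\aa^*$-stable (instability is the strictly-increasing condition (\ref{unstable}), which is preserved under passing to a cofinal subfamily), and when stable the stable values agree; hence $\mi=m_\infty^*$ and, if this common value is finite, the limit key polynomials coincide, while the stability functions $\rhi$ and $\rho_{\aa^*}$ agree on \emph{all} stable polynomials. To conclude $\rhi=\rho_{\aa^*}$ as valuations it remains to note $m=m^*$: both equal the common degree $\deg(\Phi_{\mu,\aa})=\deg(\Phi_{\mu^*,\aa^*})$, since $\Phi_{\rho_i,\rho^*_j}$ is a single $\rho_i$-equivalence class of key polynomials of degree $m$ whenever $\rho_i<\rho^*_j$, forcing $m\le m^*$, and symmetrically $m^*\le m$. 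I expect the main obstacle to be the bookkeeping in $(3)\Rightarrow(1)$: extracting from a single dominating $\nu$ an \emph{explicit} index $j$ with $\rho_i\le\rho^*_j$ requires carefully chaining Proposition \ref{gap}, Corollary \ref{Phiclass}(1), and the stable-degree hypotheses to rule out the degenerate possibility that $\psi$ fails to be comparable to the $\chi^*_j$; the remaining implications are essentially formal once one is fluent with $\phi$-expansions and the stability dictionary.
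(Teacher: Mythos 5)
Your proposal proves the same cycle of implications as the paper, and two of the three steps essentially coincide with it. For $(1)\Rightarrow(2)$ the paper also reads off equality of stable polynomials and of stable values from criterion (\ref{unstable}), but it obtains $m=m^*$ more cleanly: cofinality gives $\rho_i<\rho^*_j<\rho_k$, and Corollary \ref{KerImGr}(3) yields $m=\deg(\rho_i)\le\deg(\rho^*_j)=m^*\le\deg(\rho_k)=m$; your detour through the degree of $\Phi_{\rho_i,\rho^*_j}$ is roundabout and, as written, does not by itself force $m\le m^*$. For $(2)\Rightarrow(3)$ the paper takes $\ga=\infty$, i.e.\ $\nu=[\aa;\phi,\infty]=[\aa^*;\phi,\infty]$ (legitimate because $\rhi=\rho_{\aa^*}$ implies $\kpi(\aa)=\kpi(\aa^*)$); your ``any $\ga>\rho_i(\phi)$'' is not safe, since before cofinality is known a finite $\ga$ dominating the values $\rho_i(\phi)$ need not dominate the $\rho^*_j(\phi)$, so you should take $\ga=\infty$ or an upper bound for both families at $\phi$. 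The genuinely different step is $(3)\Rightarrow(1)$: the paper argues by contradiction, using Theorem \ref{interval} to totally order $\aa\cup\aa^*$ below $\nu$ and showing that a $\rho^*_j$ strictly above all of $\aa$ would make the degree-$m$ polynomials $\chi^*_k$, $k>j$, $\aa$-unstable, contradicting essentiality of $\aa$; you instead produce the dominating index directly: take $\psi\in\Phi_{\rho_i,\nu}$, which has degree exactly $m$ because polynomials of degree $<m$ have $\rho_i$-value equal to their $\nu$-value (Corollary \ref{Phiclass}(2) applied to $\rho_i<\rho_k<\nu$), note that $\psi$ is $\aa^*$-stable by essentiality of $\aa^*$ and that its stable value equals $\nu(\psi)$ (again Corollary \ref{Phiclass}(2), now applied to $\rho^*_{j_0}<\rho^*_j<\nu$ --- this is the point you should spell out instead of the vague ``unboundedness'' remark), and then compare $\psi$-expansions, which is valid because $\psi\in\kp(\rho_i)$ by Proposition \ref{gap}, so Lemma \ref{minimal0} computes $\rho_i$ on them. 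Your route buys an explicit index $j$ with $\rho_i\le\rho^*_j$ and avoids Theorem \ref{interval}; the paper's contradiction via instability of $\chi^*_k$ is shorter because essentiality does all the work at once. With the choice of $\ga$ repaired and the stable-value step made explicit, your argument is correct.
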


\begin{proof}
Suppose that $\aa$ and $\aa^*$ are one cofinal in each other. By the criterion of equation (\ref{unstable}), the two families have the same stable polynomials and $\rhi=\rho_{\aa^*}$. 

Also, there exist  $i,k\in A$, $j\in A^*$ such that $\rho_i<\rho^*_j<\rho_k$. By Corollary \ref{KerImGr} (3), the stable degrees coincide: $m=\deg(\rho_i)=\deg(\rho^*_j)=m^*$. Thus, (1) implies (2).

Now, suppose $m=m^*$ and $\rhi=\rho_{\aa^*}$.  That is, $\aa$ and $\aa^*$ have the same stable polynomials, and the stable values of these polynomials coincide. 

If $\aa$ has a stable limit, then $\nu=\rhi=\rho_{\aa^*}$ satisfies condition (3). 

Otherwise, for any $\phi\in\kpi(\aa)=\kpi(\aa^*)$ the limit augmentation valuation 
$$\nu=[\aa;\phi,\infty]=[\aa^*;\phi,\infty]$$ is a common upper bound of $\aa$ and $\aa^*$. Thus, (2) implies (3).

Condition (3), implies that the set $\aa\cup\aa^*$ 
is totally ordered, by Theorem \ref{interval}. Now, suppose that $\aa$ is not cofinal in $\aa^*$. There exists an index $j\in A^*$ such that $\rho^*_j>\rho_i$ for all $i\in A$. For any $k>j$ in $A^*$, we have 
$$\rho_j^*(\chi_k^*)=\be_j^*< \be_k^*=\rho_k^*(\chi_k^*).$$
This implies that $\chi^*_k$ is not $\aa$-estable. Since $\deg(\chi_k^*)=m^*=m$, this contradicts the fact that $\aa$ is essential. Thus, (3) implies (1).
\end{proof}\e


Let us check when different building data $\phi$, $\ga$ lead to the same limit augmentation.

\begin{lemma}\label{unicitylim}
Let $\aa$ and $\aa^*$ be essential continuous families of augmentations as in (\ref{rhorho}). Take $\phi\in\kpi\left(\aa\right)$,  $\phi^*\in\kpi\left(\aa^*\right)$. Choose $\ga,\ga^*\in\Lambda\infty$ such that $\rho_i(\phi)<\ga$ for all $i\in A$, and  $\rho^*_j(\phi^*)<\ga^*$ for all $j\in A^*$. 

Then, $\mu'=[\aa;\,\phi,\ga]$ coincides with $(\mu')^*=[\aa^*;\,\phi^*,\ga^*]$ if and only if 
\begin{equation}\label{equalmulim}
\rhi=\rho_{\aa^*},\qquad\deg(\phi)=\deg(\phi^*)\quad\mbox{and}\quad\rhi(\phi^*-\phi)\ge \ga=\ga^*.
\end{equation}
\end{lemma}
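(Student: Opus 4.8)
The plan is to imitate the proof of Lemma \ref{unicityOrd}, with Proposition \ref{extensionlim} in the role of Proposition \ref{extension}.

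\emph{Sufficiency.} Assume the conditions of (\ref{equalmulim}). Since $\rhi=\rho_{\aa^*}$, the two families have the same stable polynomials, hence $\kpi(\aa)=\kpi(\aa^*)$ and in particular $\phi\in\kpi(\aa^*)$. Write $\phi^*=\phi+a$ with $\deg(a)<\deg(\phi)=\deg(\phi^*)$. As $\deg(a)<\deg(\phi)$, the polynomial $a$ is stable for both families and $\mu'(a)=\rhi(a)=\rho_{\aa^*}(a)=(\mu')^*(a)\ge\ga=\ga^*$, so that $\mu'(\phi^*)=\min\{\mu'(a),\ga\}=\ga^*=(\mu')^*(\phi^*)$. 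Comparing $\mu'$ and $(\mu')^*$ on $\phi^*$-expansions — each coefficient $b$ there has degree $<\deg(\phi^*)$, whence $\mu'(b)=\rhi(b)=\rho_{\aa^*}(b)$, and $\mu'(\phi^*)\ge(\mu')^*(\phi^*)$ — gives $\mu'\ge(\mu')^*$. Since (\ref{equalmulim}) is symmetric in the two data (note $\rhi(\phi-\phi^*)=\rhi(\phi^*-\phi)$), the reverse inequality follows in the same way, so $\mu'=(\mu')^*$.

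\emph{Necessity.} Put $\nu:=\mu'=(\mu')^*$. By Proposition \ref{extensionlim}(2) we have $\rho_i<\nu$ for all $i\in A$ and $\rho^*_j<\nu$ for all $j\in A^*$, so $\aa\cup\aa^*$ is totally ordered by Theorem \ref{interval}. The main point is that $\aa$ and $\aa^*$ are mutually cofinal. Suppose not. Since $\aa\cup\aa^*$ is totally ordered, some member of one family dominates all of the other; say there is $j_0\in A^*$ with $\rho^*_{j_0}>\rho_i$ for all $i\in A$. Then for $k>j_0$ one has $\rho_i(\chi^*_k)\le\rho^*_{j_0}(\chi^*_k)=\be^*_{j_0}<\be^*_k\le\nu(\chi^*_k)$ for every $i\in A$, so by Proposition \ref{extensionlim}(2) applied to $\nu=[\aa;\phi,\ga]$ the polynomial $\chi^*_k$ is $\aa$-unstable; hence $m^*=\deg(\chi^*_k)\ge\deg(\phi)$, the degree $\deg(\phi)$ being the minimal degree of an $\aa$-unstable polynomial, and since $\aa^*$ is essential $\deg(\phi^*)>m^*\ge\deg(\phi)$. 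Now if $\ga<\infty$ then $\ga^*<\infty$ too (otherwise $\nu$ would have nontrivial support, whereas $[\aa;\phi,\ga]$ does not), and Proposition \ref{extensionlim}(3) gives $\deg(\phi)=\deg(\nu)=\deg(\phi^*)$; if $\ga=\infty$ then $\mu'$ and $(\mu')^*$ have supports $\phi\kx$ and $\phi^*\kx$, forcing $\phi=\phi^*$. Either way we contradict $\deg(\phi^*)>\deg(\phi)$. Thus $\aa$ and $\aa^*$ are mutually cofinal, and Lemma \ref{critEquiv} yields $m=m^*$ and $\rhi=\rho_{\aa^*}$; in particular the two families have the same stable polynomials, so $\kpi(\aa)=\kpi(\aa^*)$ and $\deg(\phi)=\deg(\phi^*)$.

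It remains to match $\ga$ with $\ga^*$ and to verify the last clause of (\ref{equalmulim}). If $\ga=\infty$ the coincidence of supports forces $\phi=\phi^*$ and $\ga=\ga^*=\infty$. If $\ga<\infty$, then also $\ga^*<\infty$, and by Proposition \ref{extensionlim}(3) both $\phi$ and $\phi^*$ are key polynomials of minimal degree — hence $\nu$-minimal — for $\nu$; by Theorem \ref{univbound} we get $\nu(\phi)/\deg(\phi)=\nu(\phi^*)/\deg(\phi^*)$, so $\ga=\nu(\phi)=\nu(\phi^*)=\ga^*$. Finally $\phi^*-\phi$ has degree $<\deg(\phi)$, hence is stable, and $\rhi(\phi^*-\phi)=\mu'(\phi^*-\phi)\ge\min\{\mu'(\phi^*),\mu'(\phi)\}=\ga$. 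The delicate point in the whole argument is the mutual-cofinality claim — ruling out that some valuation of one continuous family strictly dominates the entire other family — which is exactly where essentiality of both families is used.
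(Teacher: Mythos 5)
Your proof is correct and follows essentially the same route as the paper: sufficiency by the $\phi^*$-expansion comparison as in Lemma \ref{unicityOrd}, and necessity by combining Proposition \ref{extensionlim}, Theorem \ref{univbound} and the mutual-cofinality argument feeding into Lemma \ref{critEquiv}, with the same contradiction via an $\aa$-unstable $\chi^*_k$ of degree $m^*$. The only differences are cosmetic: you establish cofinality before the degree and value equalities (the paper does it in the opposite order, which makes the contradiction slightly shorter), and you treat the case $\ga=\infty$ explicitly via equality of supports, a point the paper leaves implicit.
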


\begin{proof}
If the conditions of (\ref{equalmulim}) hold, we deduce $\mu'=(\mu')^*$ by a completely analogous argument to that used in the proof of Lemma \ref{unicityOrd}. 

Conversely, suppose $\mu'=(\mu')^*$. 
By Proposition \ref{extensionlim}, $\phi$, $\phi^*$ are key polynomials for $\mu'$ of minimal degree. Hence, $\deg(\phi)=\deg(\phi^*)$ and Theorem \ref{univbound} shows that 
$$
\ga=\mu'(\phi)=\mu'(\phi^*)=(\mu')^*(\phi^*)=\ga^*.
$$
In particular, $\rhi(\phi_*-\phi)=\mu'(\phi_*-\phi)\ge \ga$. It remains only to show that $\rhi=\rho_{\aa^*}$.

 By Lemma \ref{critEquiv}, it suffices to see that $\aa$, $\aa^*$ are cofinal one into each other.
Since $\rho_i,\rho^*_j<\mu'$ for all $i\in A$, $j\in A^*$,  the set $\aa\cup\aa^*$ is totally ordered by Theorem \ref{interval}. 

Suppose that  $\rho^*_j>\rho_i$ for all $i\in A$, for some index $j\in A^*$. Arguing as in the proof of Lemma \ref{critEquiv}, we deduce that for any $k>j$ in $A^*$, the key polynomial $\chi^*_k$ is not $\aa$-stable. Since $m^*=\deg(\chi^*_{k})<\deg(\phi^*)= \deg(\phi)$, this contradicts the minimality of $\deg(\phi)$ among all unstable polynomials. 
\end{proof}

\begin{lemma}\label{intervalLim}
Let $\aa=\cfa$, with $\rho_i=[\mu;\,\chi_{i},\be_{i}]$, be a continuous family of augmentations of stable degree $m$, of some valuation $\mu$. Let $\ginf\hookrightarrow\La$ be an embedding of ordered groups.

Since all key polynomials $\chi_i$ are $\mu$-equivalent, the value $\be_{\op{min}}:=\mu(\chi_i)$ is independent of $i\in A$.
 Let $S$ be the initial segment of $\La_{>\be_{\op{min}}}$ generated by the set $\{\be_i\mid i\in A\}$.
For all $\be\in S$, let $\rho_\be=[\mu;\chi_{i},\be]$, for some $i\in A$ such that $\be_i>\be$. Then,\e

(1) \ If $\aa$ has a stable limit, then $\,(\mu,\rhi)_{\Lambda}=\left\{\rho_\be\mid\be\in S\right\}$, $\dg\left((\mu,\rhi)_{\Lambda}\right)=\{m\}$.\e

(2) \ If $\aa$ is essential, let $\mu'=[\aa;\phi,\ga]$ be a  limit augmentation for some $\phi\in\kpi(\aa)$ and $\ga\in \La\infty$ such that $\ga>T:=\left\{\rho_j(\phi)\mid j\in A\right\}$. Then,
$$
(\mu,\mu')_{\La}=\left\{\rho_\be\mid\be\in S\right\}\cup \left\{\mu_{\delta}\mid\delta\in\La,\ T<\delta<\ga\right\},
$$
where $\mu_{\delta}=[\aa;\phi,\delta]$. 
In particular, $\dg\left((\mu,\mu')_{\Lambda}\right)=\{m,\,\deg(\phi)\}$.
\end{lemma}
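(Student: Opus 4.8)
The plan is to treat the two cases in parallel, since they share the same structure: describe $(\mu,\mu')_\La$ (resp. $(\mu,\rhi)_\La$) by splitting any valuation $\rho$ strictly between $\mu$ and the limit object according to whether $\rho$ is already dominated by some member $\rho_i$ of the family $\aa$, or not. First I would verify the easy inclusion ``$\supseteq$'': for each $\be\in S$ one has $\mu<\rho_\be$ by Proposition \ref{extension}(2) (resp. Proposition \ref{extensionlim}(2)), and $\rho_\be<\rhi$ (resp. $\rho_\be<\mu'$) because $\rho_\be$ agrees with some $\rho_i$ of the family on $\chi_i$-expansions while $\rho_i<\rhi$ (resp. $\rho_i<\mu'$). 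Similarly, in case (2), for $T<\delta<\ga$ the valuation $\mu_\delta=[\aa;\phi,\delta]$ satisfies $\rho_j<\mu_\delta$ for all $j$ by Proposition \ref{extensionlim}(2), and $\mu_\delta<\mu'$ by comparing their action on $\phi$-expansions (as in Lemma \ref{intervalOrd}).

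For the reverse inclusion, take $\rho$ with $\mu<\rho<\mu'$ (or $\mu<\rho<\rhi$). By Theorem \ref{interval} the set $\aa\cup\{\rho\}$ is totally ordered, so there are two possibilities. \emph{Case A: $\rho<\rho_i$ for some $i\in A$.} Then by Corollary \ref{Phiclass}(2) applied to $\mu<\rho<\rho_i$, we get $\mu(\chi_i)<\rho(\chi_i)$, and since $\rho<\rho_i$ forces $\rho(\chi_i)<\be_i$ (else $\rho\ge\rho_i$ by comparison on $\chi_i$-expansions). As $\chi_i$ is a monic polynomial of minimal degree with $\mu(\chi_i)<\rho(\chi_i)$ — minimality follows because all $a$ of degree $<m$ are $\aa$-stable, hence $\mu(a)=\rho_i(a)=\rho(a)$ — Proposition \ref{gap} shows $\chi_i$ is a key polynomial for $\rho$, and $\rho$ coincides with $[\mu;\chi_i,\rho(\chi_i)]=\rho_\be$ for $\be=\rho(\chi_i)$ on $\chi_i$-expansions, whence $\rho=\rho_\be$; moreover $\be\in S$ because $\be<\be_i$. \emph{Case B: $\rho_i<\rho$ for all $i\in A$.} In case (1) this is vacuous: if $\rho_i<\rho<\rhi$ for all $i$, we would contradict that $\aa$ has no cofinal element strictly below its own limit in the required sense — more precisely, every polynomial is $\aa$-stable, so $\rho$ would have to agree with $\rhi$ on every $f$, giving $\rho=\rhi$. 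In case (2), with $\rho_i<\rho<\mu'=[\aa;\phi,\ga]$, I would argue that $\rho(\phi)<\ga$ (else $\rho\ge\mu'$ by comparison on $\phi$-expansions) and that $\phi$ is a monic polynomial of minimal degree with $\rho_i(\phi)<\rho(\phi)$ for all $i$: minimality holds because any polynomial $g$ with $\deg(g)<\deg(\phi)=\mi$ is $\aa$-stable, so $\rho_i(g)=\rhi(g)=\rho(g)$ eventually, forcing $\rho(\phi^s\text{-coefficients})$ stable. Then $\phi$ is a limit key polynomial, $\rho=[\aa;\phi,\rho(\phi)]=\mu_\delta$ with $\delta=\rho(\phi)$, and $T<\delta<\ga$.

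Finally, the degree statements follow by collecting degrees: in the family part every $\rho_\be$ has $\deg(\rho_\be)=m$ (a property of continuous families listed after Proposition \ref{nuIscont}), and in case (2) every $\mu_\delta$ has $\deg(\mu_\delta)=\deg(\phi)=\mi$ by Proposition \ref{extensionlim}(3); since both ranges are nonempty (using that $S\ne\emptyset$ because $A$ is nonempty, and the interval $(T,\ga)$ is nonempty whenever $\ga>T$, which holds after possibly replacing $\La$ by a denser group as in the earlier discussion — though in fact $\ga>T$ is assumed), one gets $\dg((\mu,\mu')_\La)=\{m,\deg(\phi)\}$ and $\dg((\mu,\rhi)_\La)=\{m\}$. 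The main obstacle I anticipate is Case B: carefully justifying that $\phi$ has \emph{minimal} degree among polynomials unstable with respect to the relevant comparison, and that $\rho$ is forced to equal $\mu_\delta$ rather than merely lying below it — this requires combining the stability of all lower-degree polynomials with a clean comparison of $\phi$-expansions, and is where one must invoke the essential-family machinery (the definition of $\kpi(\aa)$ and $\mi>m$) most delicately.
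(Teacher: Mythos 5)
Your overall architecture matches the paper's: totally order $\aa\cup\{\rho\}$ via Theorem \ref{interval}, settle the case $\rho\le\rho_i$ by the ordinary-augmentation interval description, kill the case ``$\rho_i<\rho$ for all $i$'' in the stable-limit situation by a squeeze, and in the essential situation identify $\rho$ with $\mu_\delta$ for $\delta=\rho(\phi)$. However, at the decisive step --- your Case B in part (2), which you yourself flag as the main obstacle --- there is a genuine gap that your plan does not close. A ``comparison of $\phi$-expansions'' only yields $\rho(f)\ge\min_s\{\rhi(a_s)+s\delta\}=\mu_\delta(f)$, i.e.\ $\rho\ge\mu_\delta$; it can never give the reverse inequality, and the fact that $\phi\in\kpi(\aa)$ (a datum of the lemma) says nothing by itself about how $\rho$ evaluates $\phi$-expansions. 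What is needed is that $\phi$ be $\rho$-minimal, and the paper gets this by applying Proposition \ref{gap} to the pair $\rho<\mu'$: since $\rho$ and $\mu'$ agree on all polynomials of degree $<\deg(\phi)$ (these are $\aa$-stable, and both valuations take the stable value on them), while $\rho(\phi)<\ga=\mu'(\phi)$, one concludes $\phi\in\Phi_{\rho,\mu'}\subset\kp(\rho)$; then Lemma \ref{minimal0} upgrades the inequality to the equality $\rho=\mu_\delta$. Your proposal never invokes Proposition \ref{gap} with $\rho$ as the \emph{smaller} member of a comparable pair, which is the only way the paper's toolkit produces key polynomials \emph{for} $\rho$.

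The same misdirection appears, more mildly, in your Case A: you claim that Proposition \ref{gap} applied to $\mu<\rho$ ``shows $\chi_i$ is a key polynomial for $\rho$'', but that application only gives $\chi_i\in\Phi_{\mu,\rho}\subset\kpm$ (key for $\mu$, already known) together with $[\mu;\chi_i,\rho(\chi_i)]\le\rho$ --- again the wrong direction for the equality you want. The correct source is Proposition \ref{gap} applied to $\rho<\rho_i$, giving $\chi_i\in\Phi_{\rho,\rho_i}\subset\kp(\rho)$ and hence $\rho$-minimality; or, as the paper does, simply quote Lemma \ref{intervalOrd}, whose hypotheses $\mu<\rho\le\rho_i=[\mu;\chi_i,\be_i]$ are exactly at hand. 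Two smaller points: your parenthetical claim that $(T,\ga)\cap\La\neq\emptyset$ whenever $\ga>T$ is not true in general (e.g.\ when $\ga$ is a supremum of $T$ and $T$ has no maximum), though the paper glosses over this as well; and your verification of the inclusion ``$\supseteq$'' and the squeeze in Case B(1) are fine.
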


\begin{proof}
The valuation $\rho_\be=[\mu;\chi_{i},\be]$ does not depend on $i$, by Lemma \ref{unicityOrd}. 

If $\aa$ has a stable limit, let us denote $\mu'=\rhi$ as well.

Let $\rho$ be a $\La$-valued valuation such that $\mu<\rho<\mu'$. 
If $\rho\le\rho_{i}$ for some $i\in A$, then $\rho=\rho_\be$ for some $\be\in S$, by Lemma \ref{intervalOrd}. 

Suppose that $\rho>\rho_i$ for all $i\in A$. If $\aa$ has a stable limit, then $\rho=\rhi$ by Proposition \ref{stablelimit}, against our assumption that $\rho<\mu'$. This proves (1).

If $\aa$ has no stable limit, then $\rho$ coincides with $\mu'$ on stable polynomials by Corollary \ref{Phiclass}. 
If $\rho(\phi)\ge \ga=\mu'(\phi)$, then  the action of both valuations on $\phi$-expansions shows that $\rho\ge\mu'$, against our assumption. Thus, $\rho(\phi)< \ga=\mu'(\phi)$.

This implies that $\phi$ is a key polynomial for $\rho$, by Proposition \ref{gap}.  Therefore, for $\delta=\rho(\phi)$, we get $\rho=\mu_{\delta}$, because both valuations coincide on $\phi$-expansions. 
\end{proof}

\section{MacLane-Vaqui\'e chains}\label{secMLV}

Consider a finite, or countably infinite, chain of mixed augmentations 
\begin{equation}\label{depthMLV}
\mu_0\ \stackrel{\phi_1,\ga_1}\lra\  \mu_1\ \stackrel{\phi_2,\ga_2}\lra\ \cdots
\ \lra\ \mu_{n-1} 
\ \stackrel{\phi_{n},\ga_{n}}\lra\ \mu_{n} \ \lra\ \cdots
\end{equation}
in which every node is an augmentation  of the previous node, of one of the following types:\e

\emph{Ordinary augmentation}: \ $\mu_{n+1}=[\mu_n;\, \phi_{n+1},\ga_{n+1}]$, for some $\phi_{n+1}\in\kp(\mu_n)$.\e

\emph{Limit augmentation}:  \ $\mu_{n+1}=[\aa;\, \phi_{n+1},\ga_{n+1}]$,  for some $\phi_{n+1}\in\kpi(\aa)$, where $\aa$ is an essential continuous family of augmentations of $\mu_n$.\e




Let us fix $\phi_0\in\kp(\mu_0)$ a key polynomial of minimal degree, and let $\ga_0=\mu_0(\phi_0)$. 

The values $\ga_n$ belong to $\La\infty$ for some ordered group $\La$. For all $n\ge0$, we shall refer to $(\phi_n,\ga_n)$ as the \emph{building pair} of $\mu_n$.

The following properties of an infinite chain of mixed augmentations follow from Theorem \ref{charKP}, Propositions \ref{extension}, \ref{gap}, \ref{extensionlim} and Corollary \ref{Phiclass}.\e

$\bullet$ \ \,$\ga_n=\mu_n(\phi_n)<\ga_{n+1}$.\e

$\bullet$ \ \,$\phi_n$ is a key polynomial for $\mu_n$ of minimal degree; thus,$$\deg(\mu_n)=\deg(\phi_n)\mid \deg\left(\Phi_{\mu_n,\mu_{n+1}}\right).$$
$$
 \bullet \quad   \Phi_{\mu_n,\mu_{n+1}}=
 \begin{cases}
 [\phi_{n+1}]_{\mu_n},&\mbox{ if \ $\mu_n\to\mu_{n+1}$  \ ordinary augmentation},\\
 \Phi_{\mu_n,\aa}=[\chi_i]_{\mu_n},\  \forall\, i\in A,&\mbox{ if \ $\mu_n\to\mu_{n+1}$  \ limit augmentation}.
 \end{cases}\quad\ 
$$
$$
    \deg\left(\Phi_{\mu_n,\mu_{n+1}}\right)=
 \begin{cases}
\deg(\phi_{n+1}),&\mbox{if \ $\mu_n\to\mu_{n+1}$  \ ordinary augmentation},\\
 \mbox{stable degree of }\aa,&\mbox{if \ $\mu_n\to\mu_{n+1}$  \ limit augmentation}.
 \end{cases}
$$

\subsection{Definition of MacLane-Vaqui\'e chains}\label{subsecDefMLV}
Since we want our chains of mixed augmentations  to be as unique as possible, we shall impose a technical condition.

\begin{definition}\label{defMLV}
A finite, or  countably infinite, chain of mixed  augmentations as in {\rm (\ref{depthMLV})} is a \emph{MacLane-Vaqui\'e (abbreviated MLV) chain}  if every augmentation step satisfies:
\begin{itemize}
\item If $\,\mu_n\to\mu_{n+1}\,$ is ordinary, then $\ \deg(\mu_n)<\deg\left(\Phi_{\mu_n,\mu_{n+1}}\right)$.
\item If $\,\mu_n\to\mu_{n+1}\,$ is limit, then $\ \deg(\mu_n)=\deg\left(\Phi_{\mu_n,\mu_{n+1}}\right)$ and $\ \phi_n\not \in\Phi_{\mu_n,\mu_{n+1}}$. 
\end{itemize}

A \mlv chain is \emph{complete} if the valuation  $\mu_0$ has depth zero.
\end{definition}

Let us point out some specific features of complete infinite MLV-chains. Denote
$$m_n=\deg(\mu_n)=\deg(\phi_n)\quad \mbox{for all}\quad n\ge0.$$ 

$\bullet$ \ $m_n<m_{n+1}$. \e

If  $\mu_n\to \mu_{n+1}$ is an ordinary augmentation, then $m_n<\deg(\phi_{n+1})$ by the MLV condition.

If  $\mu_n\to \mu_{n+1}$ is a limit augmentation, then $m_n$ is the stable degree of the essential continuous family $\aa$ of augmentations of $\mu_n$. Since $\phi_{n+1}\in\kpi(\aa)$, we have $\deg(\phi_{n+1})>m_n$.\e

$\bullet$ \ $\Phi_{\mu_n,\mu_{n+1}}$ is a proper class of key polynomials for $\mu$. \e

Indeed, $\phi_n$ is a key polynomial for $\mu_n$ of minimal degree, and $\phi_n\not\in\Phi_{\mu_n,\mu_{n+1}}$.\e

$\bullet$ \ $\mu_n$ is residually transcendental  and $\ga_n\in \gq$.\e 

Since $\mu_n$ admits proper classes of key polynomials, it is commensurable over $v$ by Theorem \ref{incomm}. 

However, in a finite MLV-chain of length $r$, the value $\ga_r$ may  not belong to $\gq$, or be equal to $\infty$. Thus, $\mu_r$ may be incommensurable, or have non-trivial support.\e

$\bullet$ \ Every infinite MacLane-Vaqui\'e chain has a stable limit.  \e

Indeed, any $f\in\kx$ has $\deg(f)<m_n$ for some $n$. Hence, $\mu_{n}(f)=\mu_{n+1}(f)$ by Proposition \ref{gap}, 
since $m_n\le\deg\left(\Phi_{\mu_{n},\mu_{n+1}}\right)$ for both types of the augmentation $\mu_{n}\to\mu_{n+1}$.

The criterion of equation (\ref{unstable}) shows that $f$ is stable.\e



$\bullet$ \ By Propositions \ref{extension} and \ref{extensionlim}, we get a chain of value groups:
\begin{equation}\label{chainGamma}
\g_{\mu_{-1}}:=\g\subset\g_{\mu_0}\subset \cdots \subset \g_{\mu_{n}}\subset \cdots\qquad \mbox{ with }\quad\g_{\mu_{n}}=\gen{\g_{\mu_{n-1}},\ga_{n}},\ \forall\,n\ge 0.
\end{equation}

$\bullet$ \  $\g_{\mu_n,m_n}=\g_{\mu_{n-1}}$. In particular, $\ e_n:=\erel(\mu_n)=\left(\g_{\mu_n}\colon \g_{\mu_{n-1}}\right)$.\e

Indeed, take any $a\in\kx$ such that $\deg(a)<m_n$. 

If $\mu_{n-1}\to\mu_n$ is an ordinary augmentation, then $\mu_{n-1}(a)=\mu_n(a)$, because $m_n=\deg\left(\Phi_{\mu_{n-1},\mu_{n}}\right)$. Thus,  $\g_{\mu_{n},m_n}\subset\g_{\mu_{n-1}}$.
On the other hand, $\phi_n$ is a proper key polynomial for $\mu_{n-1}$ and Lemma \ref{groupchain0} shows that $\g_{\mu_{n-1}}=\g_{\mu_{n-1},m_n}\subset\g_{\mu_n,m_n}$. 

If $\mu_n=[\aa; \phi_n,\ga_n]$ is a limit augmentation, then $a$ is $\aa$-stable, so that $\mu_n(a)=\rhi(a)$. Thus,  $\g_{\mu_{n},m_n}\subset\ginf=\g_{\mu_{n-1}}$, the last equality by Lemma \ref{mu=rinf}.

Also, the proper class $\Phi_{\mu_{n-1},\mu_n}$ has degree $m_{n-1}<m_n$. By Lemma \ref{groupchain0}, $$\g_{\mu_{n-1}}=\g_{\mu_{n-1},m_{n-1}}=\g_{\mu_n,m_{n-1}}\subset\g_{\mu_n,m_n}.$$



\subsection{The main theorem of MacLane-Vaqui\'e}



\begin{lemma}\label{singlestep}
Let $(\mu_n)_{0\le n\le r}$ be a complete finite MLV-chain. Let $\mu_r \stackrel{\phi,\ga}\lra \mu$ be an ordinary, or limit augmentation. Then, $\mu$ is the last valuation of a complete finite MLV-chain, keeping $(\phi,\ga)$ as the building pair of $\mu$ in the last augmentation step.
\end{lemma}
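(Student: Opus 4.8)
The plan is to distinguish the two possible types of the new step $\mu_r \stackrel{\phi,\ga}\lra \mu$ and, in each case, either verify the MLV condition directly (so that the chain $(\mu_n)_{0\le n\le r}$ followed by $\mu$ is already an MLV chain) or, if the condition fails, \emph{absorb} the last step of the given chain and rebuild a shorter chain ending in $\mu$ with building pair $(\phi,\ga)$. First I would recall that being a complete MLV chain means $\mu_0$ has depth zero and every step satisfies the degree/properness conditions of Definition~\ref{defMLV}; concatenating one more augmentation preserves completeness, so the only thing at stake is the MLV condition at the final step $\mu_r\to\mu$.

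\textbf{Case 1: $\mu_r\to\mu$ is an ordinary augmentation}, $\mu=[\mu_r;\phi,\ga]$ with $\phi\in\kp(\mu_r)$. Here $\Phi_{\mu_r,\mu}=[\phi]_{\mu_r}$ by Corollary~\ref{Phiclass}(1) (or Proposition~\ref{gap}), so $\deg(\Phi_{\mu_r,\mu})=\deg(\phi)$. If $\deg(\phi)>\deg(\mu_r)$, the MLV condition holds and we are done. If $\deg(\phi)=\deg(\mu_r)=m_r$, then $\phi$ is a key polynomial for $\mu_r$ of minimal degree, and I would argue that $\mu=[\mu_{r-1};\phi,\ga]$ is also an ordinary augmentation of $\mu_{r-1}$ — using that $\phi$ is $\mu_{r-1}$-equivalent to $\phi_r$ (both minimal-degree key polynomials, same degree, with $\mu_{r-1}(\phi)=\mu_{r-1}(\phi_r)$) together with Lemma~\ref{unicityOrd}-type bookkeeping — so that replacing the last two steps $\mu_{r-1}\to\mu_r\to\mu$ by the single step $\mu_{r-1}\to\mu$ yields a shorter chain. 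One must then check this new final step satisfies the MLV condition: if $\mu_{r-1}\to\mu_r$ was ordinary we have $\deg(\Phi_{\mu_{r-1},\mu})=\deg(\Phi_{\mu_{r-1},\mu_r})>\deg(\mu_{r-1})$ and we are done; if it was a limit step we keep $\phi_{r-1}\notin\Phi_{\mu_{r-1},\mu_r}=\Phi_{\mu_{r-1},\mu}$, again done. If even this collapses (only in the ordinary subcase when $\deg(\phi)=\deg(\mu_{r-1})$), I would iterate the absorption downward along the chain; since the degrees $m_n$ are strictly increasing and bounded below, this terminates, in the worst case at $\mu_0$, and by the depth-zero description~(\ref{mu0eq}) one sees $\mu$ is itself a depth zero valuation, i.e.\ the length-zero chain $(\mu)$ with building pair $(\phi,\ga)$ works.

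\textbf{Case 2: $\mu_r\to\mu$ is a limit augmentation}, $\mu=[\aa;\phi,\ga]$ for an essential continuous family $\aa=(\rho_i)_{i\in A}$ of augmentations of $\mu_r$ of stable degree $m$, with $\phi\in\kpi(\aa)$. Then $\Phi_{\mu_r,\mu}=\Phi_{\mu_r,\aa}=[\chi_i]_{\mu_r}$ has degree $m$, and since $\aa$ is a genuine continuous family of augmentations of $\mu_r$ we have $\deg(\mu_r)\le m$; in fact the definition of ``essential'' forces $m\ge \deg(\mu_r)$, and we must rule out $m>\deg(\mu_r)$, which cannot happen because $\aa$ consists of ordinary augmentations of $\mu_r$ by minimal-degree key polynomials of $\mu_r$ only if $m=\deg(\mu_r)$ — wait, the $\chi_i$ need not be minimal. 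If $\deg(\mu_r)=m$, I would still need $\phi_r\notin\Phi_{\mu_r,\mu}$; this holds unless $\chi_i\smu\phi_r$, but then $\aa$ would be a continuous family through the \emph{improper} class, contradicting that $\aa$ is essential (indeed the stable-degree analysis after~(\ref{samegroup}) shows $\chi_i$ is a \emph{proper} key polynomial for $\rho_i$, hence $[\chi_i]_\mu$ is proper, hence $\ne[\phi_r]_{\mu_r}$ when $\mu_r\to\mu_{r-1}$ was such that $\phi_r$ sits in the improper class). So the MLV condition holds outright and no rebuilding is needed; the chain $(\mu_n)_{0\le n\le r}$ extended by $\mu$ is the desired complete MLV chain. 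If instead $\deg(\mu_r)<m$, I would show $\aa$ is already a continuous family of augmentations of $\mu_{r-1}$ (the $\chi_i$ remain key polynomials for $\mu_{r-1}$, are $\mu_{r-1}$-unstable appropriately) and absorb the step $\mu_{r-1}\to\mu_r$; but a cleaner route is to observe the step $\mu_{r-1}\to\mu_r$ has $\deg(\Phi_{\mu_{r-1},\mu_r})<m$, so inserting $\mu_r$ was "between" the same continuous family and $\mu_{r-1}$, and by Lemma~\ref{intervalOrd}/\ref{intervalLim} one reconstitutes a shorter chain.

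\textbf{Main obstacle.} The technical heart is the absorption step in Case~1 (and its analogue in Case~2): showing that when $\deg(\phi)=\deg(\mu_r)$ the new augmentation "factors through" $\mu_{r-1}$ with the \emph{same} building pair $(\phi,\ga)$, and that the resulting shorter chain still satisfies the MLV degree condition at its new last step — in particular that the strictly increasing sequence of degrees $m_n$ guarantees the downward iteration halts, and that when it halts at the depth-zero level $\mu$ is genuinely a depth zero valuation so that the trivial chain $(\mu)$ is a valid complete MLV chain. I expect the rest (valuation axioms, unchanged building pair, completeness) to be routine given Propositions~\ref{extension}, \ref{extensionlim} and Lemmas~\ref{unicityOrd}, \ref{unicitylim}.
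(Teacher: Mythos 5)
There are genuine gaps, and the most serious one is in your Case 2. You claim that $\chi_i\sim_{\mu_r}\phi_r$ would make $\aa$ ``a continuous family through the improper class, contradicting that $\aa$ is essential.'' There is no such contradiction: Definition \ref{defcont} and essentialness ($m<\mi<\infty$) impose no properness condition on the class $\Phi_{\mu_r,\aa}=[\chi_i]_{\mu_r}$ — this is precisely why Proposition \ref{extensionlim}\,(4) and Lemma \ref{mu=rinf} are stated conditionally on properness. In fact, when $m=\deg(\mu_r)$ and $\erel(\mu_r)>1$, all key polynomials for $\mu_r$ of minimal degree are $\mu_r$-equivalent, so $\chi_i\sim_{\mu_r}\phi_r$ is forced and the direct step $\mu_r\to\mu$ genuinely violates the MLV condition; your argument gives no way to repair it. The paper's device, absent from your proposal, is to \emph{refine} rather than absorb: pick any $i\in A$ and decompose $\mu_r\ \stackrel{\chi_i,\be_i}\lra\ \rho_i\ \stackrel{\phi,\ga}\lra\ \mu$, where $\mu=[\aa_{>i};\phi,\ga]$ is a limit augmentation with respect to the tail family $\aa_{>i}$ of augmentations of $\rho_i$. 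Condition (3) of Definition \ref{defcont} gives $\chi_i\not\sim_{\rho_i}\chi_j$ for $j>i$, hence $\chi_i\notin\Phi_{\rho_i,\mu}$ and $\deg(\Phi_{\rho_i,\mu})=m=\deg(\rho_i)$, so this last step is MLV with building pair $(\phi,\ga)$, while the ordinary piece $\mu_r\to\rho_i$ is handled by Case 1 (this also disposes of your subcase $m>\deg(\mu_r)$, where that piece is already MLV); note the chain may get \emph{longer}, not shorter. Your alternative of turning $\aa$ into a continuous family over $\mu_{r-1}$ is neither needed nor justified.

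In Case 1 with $\deg(\phi)=\deg(\mu_r)$ there are two further problems. First, when the previous step is a limit augmentation $\mu_r=[\aa;\phi_r,\ga_r]$, the replacement step $\mu_{r-1}\to\mu$ cannot be the ordinary augmentation $[\mu_{r-1};\phi,\ga]$: the stable degree of $\aa$ is $m_{r-1}<\deg(\phi)$, so coefficients of the $\phi$-expansion of degree between $m_{r-1}$ and $\deg(\phi)$ may take strictly larger values under $\mu_r$ than under $\mu_{r-1}$, and $\phi$ need not be a key polynomial for $\mu_{r-1}$ at all. What must be shown (as the paper does via Proposition \ref{extensionlim}\,(2)) is that $\op{in}_{\mu_r}\phi$ not being a unit forces $\phi$ to be $\aa$-unstable of minimal degree, i.e. $\phi\in\kpi(\aa)$, so that $\mu=[\aa;\phi,\ga]$ is a \emph{limit} augmentation of $\mu_{r-1}$ over the unchanged family $\aa$, which satisfies the MLV condition. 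Second, your justification that $\phi\sim_{\mu_{r-1}}\phi_r$ (``both minimal-degree key polynomials, same degree, equal values'') is not a valid equivalence criterion; the correct argument is $\phi\in\Phi_{\mu_r,\mu}\subset\Phi_{\mu_{r-1},\mu}=\Phi_{\mu_{r-1},\mu_r}=[\phi_r]_{\mu_{r-1}}$ via Corollary \ref{Phiclass}. Finally, the downward iteration you worry about never occurs: degrees strictly increase along an MLV chain, so $\deg(\phi)=m_r>m_{r-1}=\deg(\mu_{r-1})$ and a single absorption always suffices, the only degenerate case being $r=0$, where $\mu=\omega_{a,\ga}$ is itself of depth zero.
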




\begin{proof}
If the augmentation step $\mu_r\to\mu$ satisfies the MLV condition of Definition \ref{defMLV}, the statement  is obvious. Let us suppose that this is not the case. \e

\noindent{\bf Case 1.} The augmentation $\mu=[\mu_r;\,\phi,\ga]$ is ordinary and $\deg(\mu_r)=\deg(\phi)$.

If $r=0$, then $\phi=x-a$ for some $a\in K$ and $\mu$ is a depth zero valuation $\mu=\omega_{a,\ga}$. Thus, $\mu$ forms a complete MLV-chain of length zero whose building pair is $(\phi,\ga)$.

If $r>0$ and $\mu_r=[\mu_{r-1};\,\phi_r,\ga_r]$ is an ordinary augmentation, then $$\deg(\mu_{r-1})<\deg(\phi_r)=\deg(\mu_r)=\deg(\phi).$$ By Corollary \ref{Phiclass},
$$
\phi\in\Phi_{\mu_r,\mu}\subset\Phi_{\mu_{r-1},\mu}=\Phi_{\mu_{r-1},\mu_r}=[\phi_r]_{\mu_{r-1}}.
$$
Hence, $\phi$ is a key polynomial for $\mu_{r-1}$ and the ordinary augmentation $[\mu_{r-1};\,\phi,\ga]$ satisfies the MLV condition. Also, $[\mu_{r-1};\,\phi,\ga]=\mu$ because both valuations coincide on $\phi$-expansions.   

If $r>0$ and  $\mu_r=[\aa;\,\phi_r,\ga_r]$ is a limit augmentation, then 
$$\deg(\mu_{r-1})=\deg\left(\Phi_{\mu_{r-1},\mu_r}\right)=\mbox{ stable degree of }\aa<\deg(\phi_r)=\deg(\phi).$$

Since $\phi$ is a key polynomial for $\mu_r$, $\op{in}_{\mu_r}\phi$ is not a unit. By Proposition \ref{extensionlim} (2), $\phi$ is unstable of minimal degree for the family $\aa$; thus, $\phi\in\kpi(\aa)$. As before,  $\mu=[\aa;\,\phi,\ga]$ and this  limit augmentation satisfies the MLV condition, because the essential continuous family $\aa$ of augmentations of $\mu_{r-1}$ remains unchanged.\e


\noindent{\bf Case 2.} The augmentation $\mu=[\aa;\,\phi,\ga]$ is a limit augmentation with respect to some essential continuous family $\aa=\cfa$ of augmentations of $\mu_r$. Let $\rho_i=[\mu_r;\,\chi_{i},\be_{i}]$. 

Take any $i\in A$.
We may decompose the augmentation step $\mu_r\to\mu$ into two pieces: 
$$
\mu_r\ \stackrel{\chi_i,\be_i}\lra\ \rho_i\ \stackrel{\phi,\ga}\lra\ \mu,
$$
where $\mu_r\to\rho_i$ is an ordinary augmentation, and $\mu=[\aa_{>i};\,\phi,\ga]$ is a limit augmentation with respect to the essential continuous family $\aa_{>i}$ of augmentations of $\rho_i$.

By the previous case,  $\rho_i$ is the last valuation of a complete finite MLV-chain keeping $(\chi_i,\be_i)$ as the building pair of $\rho_i$.
Also, the limit augmentation $\rho_i\to\mu$ satisfies the MLV condition. Indeed,  the class 
$$\Phi_{\rho_i,\mu}=\Phi_{\rho_i,\aa_{>i}}=[\chi_j]_{\rho_i}\quad\mbox{for all}\quad j>i,$$
has degree $\deg(\chi_j)=\deg(\rho_i)$, and $\chi_i\not\in \Phi_{\rho_i,\mu}$ because  $\chi_i\not\sim_{\rho_i}\chi_j$ for all $j>i$, by condition (3) of Definition \ref{defcont}.
\end{proof}

\begin{theorem}\label{main}
Every valuation $\nu$ on $\kx$ falls in one of the following cases.  \e

(a) \ It is the last valuation of a complete finite MacLane-Vaqui\'e chain.
$$ \mu_0\ \stackrel{\phi_1,\ga_1}\lra\  \mu_1\ \stackrel{\phi_2,\ga_2}\lra\ \cdots\ \lra\ \mu_{r-1}\ \stackrel{\phi_{r},\ga_{r}}\lra\ \mu_{r}=\nu.$$

(b) \ It is the stable limit of a continuous family $\aa=\cfa$ of augmentations of some valuation $\mu_r$ falling in case (a):
$$ \mu_0\ \stackrel{\phi_1,\ga_1}\lra\ \mu_1\ \stackrel{\phi_2,\ga_2}\lra\   \cdots\ \lra\ \mu_{r-1}\ \stackrel{\phi_{r},\ga_{r}}\lra\ \mu_{r}\  \stackrel{\cfa}\lra\  \rhi=\nu,
$$
such that the class $\Phi_{\mu_r,\nu}$ has degree $\deg(\mu_r)$ and  $\phi_r\not\in\Phi_{\mu_r,\nu}$.\e

(c) \ It is the stable limit of a complete infinite MacLane-Vaqui\'e chain.
$$
\mu_0\ \stackrel{\phi_1,\ga_1}\lra\  \mu_1\ \stackrel{\phi_2,\ga_2}\lra\ \cdots\ \lra\ \mu_{n-1} \ \stackrel{\phi_{n},\ga_{n}}\lra\ \mu_{n} \  \lra\ \cdots
$$
We say that $\mu$ has \emph{finite depth} $r$, \emph{quasi-finite depth} $r$, or \emph{infinite depth}, respectively.
\end{theorem}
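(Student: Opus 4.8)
The plan is to build the \mlv chain underlying $\nu$ by an iterative ``descent'' argument, using Proposition \ref{gap} to approximate $\nu$ from below and the structural dichotomy of continuous families (cases (a), (b), (c) of the essential/inessential/stable-limit trichotomy) to decide how to continue. First I would handle the base case: since $\mu_0$ is assumed to admit key polynomials of degree one, I may start the chain at a depth zero valuation $\omega_{a,\ga}\le\nu$ with $a\in K$; concretely, if $\nu$ has degree $1$ one checks directly it is itself a depth zero valuation (case (a), $r=0$), and otherwise one takes any $\omega_{a,0}<\nu$.

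The main construction is inductive. Suppose we have produced a complete finite \mlv chain ending in $\mu_r\le\nu$, built so that $(\phi_r,\ga_r)$ is the building pair of $\mu_r$. If $\mu_r=\nu$ we are in case (a) and stop. Otherwise $\mu_r<\nu$, so by Proposition \ref{gap} the set $\Phi_{\mu_r,\nu}$ is nonempty and consists of proper key polynomials for $\mu_r$ (or all of $\kp(\mu_r)$ if $\erel(\mu_r)=1$); pick $\phi\in\Phi_{\mu_r,\nu}$ and set $m=\deg(\phi)$. If $m>\deg(\mu_r)$, the ordinary augmentation $\mu_{r+1}=[\mu_r;\phi,\nu(\phi)]$ satisfies $\mu_r<\mu_{r+1}\le\nu$ and the MLV condition $\deg(\mu_r)<\deg\Phi_{\mu_r,\mu_{r+1}}$, so we may append it and continue. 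If $m=\deg(\mu_r)$, then by the MLV structure $\phi_r\notin\Phi_{\mu_r,\nu}$ is impossible in general—rather, I form the continuous family $\aa=(\rho_\al)_{\al\in\nu(\Phi_{\mu_r,\nu})}$ of Proposition \ref{nuIscont}, which has stable degree $m$. Now the trichotomy applies: if $\aa$ is inessential, there is an unstable polynomial of degree $m$, which by Proposition \ref{gap} applied to $\mu_r<\nu$ forces $\phi_r\in\Phi_{\mu_r,\nu}$ (after replacing $\mu_r$ by an earlier node if necessary, as in Lemma \ref{singlestep}), contradicting the completeness/MLV hypothesis on the chain already built—so in fact, after a harmless reorganization absorbing the inessential step into the previous augmentation via Lemma \ref{singlestep}, this case does not occur for a properly built chain. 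If $\aa$ has a stable limit, then $\nu=\rhi$ (the stable value of every polynomial of degree $<m$ agrees with $\nu$ by construction, and all polynomials are stable), and $\phi_r\notin\Phi_{\mu_r,\nu}$ holds automatically because $\phi_r$ has degree $\deg(\mu_r)=m$ yet is $\mu_r$-equivalent to a minimal-degree key polynomial while $\Phi_{\mu_r,\nu}$ is proper—this is exactly case (b). If $\aa$ is essential, take $\phi_{r+1}\in\kpi(\aa)$ with $\nu(\phi_{r+1})$ (or $\infty$ if $\phi_{r+1}\mmu$-divides into the support) and set $\mu_{r+1}=[\aa;\phi_{r+1},\ga_{r+1}]$ where $\ga_{r+1}=\nu(\phi_{r+1})$; then $\mu_r<\mu_{r+1}\le\nu$, $\deg(\mu_{r+1})=\mi>m=\deg(\mu_r)$, and the limit-augmentation MLV condition holds by Lemma \ref{singlestep}.

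The process either terminates (case (a), when some $\mu_r=\nu$), stops with a stable-limit continuous family (case (b)), or runs forever. In the last situation, the degrees $m_n=\deg(\mu_n)$ are strictly increasing, hence unbounded, so every $f\in\kx$ has $\deg(f)<m_n$ for large $n$ and therefore $\mu_n(f)=\mu_{n+1}(f)$ by Proposition \ref{gap}; thus $f$ is stable and $\nu':=\lim\mu_n$ is a valuation with $\mu_n\le\nu'\le\nu$ for all $n$. To conclude $\nu'=\nu$ I use that $\nu$ and $\nu'$ agree on all $f$ of degree $<m_n$ for every $n$, hence on all of $\kx$; this gives case (c). The mutual exclusivity of (a), (b), (c) follows from the degree behavior: in (a) the chain is finite with $\mu_r=\nu$, in (b) $\nu$ is a maximal valuation of finite degree $\deg(\mu_r)$ in the sense of section \ref{subsecDegMax} (by Proposition \ref{stablelimit}), and in (c) $\nu$ is a maximal valuation of infinite degree.

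The hard part will be the bookkeeping ensuring that the inessential case genuinely never arises for a correctly built complete MLV chain—i.e.\ that whenever $\Phi_{\mu_r,\nu}$ has degree equal to $\deg(\mu_r)$, the continuous family $\aa$ attached to it is either essential or has a stable limit, never inessential. This is where Lemma \ref{singlestep} and the MLV condition interlock: an inessential family would produce an unstable polynomial of the stable degree $m=\deg(\mu_r)$, and tracing this back through Proposition \ref{gap} and Corollary \ref{Phiclass} shows the offending augmentation $\mu_r\to\nu$ would have been realizable as a single ordinary augmentation from an earlier node, meaning the chain was not built greedily/maximally at the previous step. Making this precise—choosing at each stage the ordinary augmentation that goes \emph{as far as possible} toward $\nu$ before resorting to a limit augmentation—is the technical crux, and it is exactly what the MLV condition $\deg(\mu_n)<\deg\Phi_{\mu_n,\mu_{n+1}}$ for ordinary steps encodes.
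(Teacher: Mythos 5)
Your overall plan follows the paper's strategy (iterate Proposition \ref{gap}, use Proposition \ref{nuIscont} and Lemma \ref{singlestep}, conclude the infinite case via stability of degrees), but there is a genuine gap: you never treat the case in which the set $A=\nu\left(\Phi_{\mu_r,\nu}\right)$ contains a maximal element. When $\deg\left(\Phi_{\mu_r,\nu}\right)=\deg(\mu_r)$ you immediately ``form the continuous family of Proposition \ref{nuIscont}'', but that proposition explicitly assumes $A$ has no maximal element, and condition (1) of Definition \ref{defcont} makes it impossible to have a continuous family indexed by a set with a maximum; so the essential/inessential/stable-limit trichotomy on which your step relies is simply unavailable in this case. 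The case does occur: take $\nu=\omega_{a,\ga}$ with $v(a)=0<\ga$ and $\mu_0=\omega_{0,0}$; then $\Phi_{\mu_0,\nu}$ has degree $1=\deg(\mu_0)$, $A$ attains its maximum $\ga=\nu(x-a)$, and $\nu$ is reached by a single ordinary augmentation of the \emph{same} degree, with no limit process in sight. The paper resolves this case with one or two \emph{ordinary} augmentations: put $\eta=[\mu_r;f,\ga]$ with $\ga=\max(A)=\nu(f)$; if $\eta<\nu$, then Theorem \ref{univbound} combined with the maximality of $\ga$ rules out $\deg\left(\Phi_{\eta,\nu}\right)=\deg(\mu_r)$, so a second ordinary augmentation strictly raises the degree, and Lemma \ref{singlestep} is then invoked to rebuild a complete MLV-chain through the intermediate node $\eta$ (which has the same degree as $\mu_r$ and hence violates the MLV condition as stated). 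Nothing in your proposal supplies this argument.

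Two further points. Your ``technical crux'' is misdiagnosed: when $A$ has no maximal element, Proposition \ref{nuIscont} already guarantees that \emph{all} polynomials of the stable degree $m$ are stable, so the family built from $\nu$ is never inessential --- no greedy choice of augmentations and no ``reorganization absorbing an inessential step'' is needed; the genuine extra case is the maximal-element one above. Also, in case (b) the condition $\phi_r\not\in\Phi_{\mu_r,\nu}$ does not hold ``automatically because $\Phi_{\mu_r,\nu}$ is proper'': if $\erel(\mu_r)=1$ every class is proper, including $[\phi_r]_{\mu_r}$, so properness excludes nothing. The paper secures this condition by splitting off one ordinary augmentation $\mu_r\to\rho_i$ and re-basing the family at $\rho_i$, where condition (3) of Definition \ref{defcont} gives $\chi_i\not\in\Phi_{\rho_i,\nu}$, exactly as in Case 2 of Lemma \ref{singlestep}. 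Finally, a small slip in your base case: a valuation $\omega_{a,0}<\nu$ need not exist (for instance when $\nu(x-a)<0$ for all $a\in K$); the correct choice is $\mu_0=\omega_{0,\ga_0}$ with $\ga_0=\nu(x)$, as in the paper.
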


\begin{proof}
If $\nu$ is a valuation of depth zero, then it obviously falls in case (a).

Otherwise, let $\ga_0=\nu(x)<\infty$. The depth zero valuation $\mu_0=\omega_{0,\ga_0}$ forms a complete finite MLV-chain of length zero, whose last valuation satisfies $\mu_0<\nu$.

In order to prove the theorem, it suffices to prove the following Claim.\e

\noindent{\bf Claim. }For any complete finite MLV-chain of length $r\ge0$, whose last valuation $\mu_r$ satisfies $\mu_r<\nu$, at least one of the two following conditions hold:\e

(1) \ The valuation $\nu$ is the stable limit of some continuous family of augmentations of $\mu_r$.\e

(2) \ We may apply to $\mu_r$ one, or two, augmentation steps and build up a valuation $\mu_r<\mu\le\nu$ such that either $\mu=\nu$, or $\deg(\mu_r)<\deg(\mu)$.\e

Indeed, suppose that $\nu=\rhi$ is the stable limit of some continuous family $\aa=\cfa$ of augmentations of $\mu_r$. For any choice of $i\in A$, we may split the step $\mu_r\to\nu$ into two pieces
$$
\mu_r\ \stackrel{\chi_i,\be_i}\lra\ \rho_i\ \stackrel{\left(\rho_j\right)_{j>i}}\lra\ \rho_{\aa_{>i}}=\nu,
$$
where $\mu_r\to\rho_i$ is an ordinary augmentation, and $\nu$ is the stable limit of the continuous family $\aa_{>i}$ of augmentations of $\rho_i$.
Arguing as in the proof of Case 2 in Lemma \ref{singlestep}, we see that $\nu$ falls in case (b) of the theorem.

On the other hand, if (2) holds, then the valuation $\mu$ is the last valuation of a complete finite MLV-chain by Lemma \ref{singlestep}.  

Therefore, the iteration of this procedure shows that either $\nu$ falls in cases (a) or (b) of the theorem, or 
there exists a complete infinite MLV-chain  $\left(\mu_n\right)_{n\ge0}$ satisfying $\mu_n<\nu$ for all $n$.
The stable limit of this MLV-chain is $\nu$ by Proposition \ref{stablelimit}. Thus, $\nu$ falls in case (c).\e

Let us prove the Claim. Let $m=\deg\left(\Phi_{\mu_r,\nu}\right)$. 
If $m>\deg(\mu_r)$, then for any $\phi\in\Phi_{\mu_r,\nu}$ the augmented valuation $\mu=[\mu_r;\phi,\nu(\phi)]$ satisfies the Claim.

Suppose that $m=\deg(\mu_r)$. 
We distinguish two cases according to possible upper bounds of the totally ordered set $$A=\nu\left(\Phi_{\mu_r,\nu}\right)\subset\gn\infty.$$

\noindent{\bf The set $A$ contains a maximal element.}

Let $\ga=\mx(A)=\nu(f)$, for some $f\in \Phi_{\mu_r,\nu}$.
By Proposition \ref{gap}, $f\in\kp(\mu_r)$ and the augmented valuation $\eta=[\mu_r;f,\ga]$ satisfies $\eta\le \nu$. If $\eta=\nu$, then $\mu=\eta$ satisfies the Claim.

Suppose $\eta<\nu$, and let $\Phi_{\eta,\nu}=[\chi]_{\eta}$. By Proposition \ref{extension}, $\deg(\eta)=\deg(f)=m$. Since $\chi$ is a key polynomial for $\eta$,  we have $\deg(\chi)\ge m$. 

If $\deg(\chi)=m$, then $\eta(f)=\eta(\chi)$ by Theorem \ref{univbound}. This  contradicts the maximality of $\ga$:
$$\ga=\eta(f)=\eta(\chi)<\nu(\chi),\qquad \nu(\chi)\in A.$$

Thus, $\deg(\chi)>m$ and the augmentation $\mu=[\eta; \chi,\nu(\chi)]$ satisfies the Claim. \e


\noindent{\bf The set $A$ does not contain a maximal element.}\e



By Proposition \ref{nuIscont}, there is a continuous family $\aa=\cfa$ of augmentations of $\mu_r$ of stable degree $m$, parameterized by the set $A$, such that $\rho_i<\nu$ for all $i\in A$, and all polynomials of degree $m$ are stable. 

If $\aa$ has a stable limit $\rhi$, then $\rhi=\nu$ by Proposition \ref{stablelimit}. Thus, the Claim is satisfied.

If $\aa$ has no stable limit, then it is essential and we may consider the limit augmentation
$\mu=[\aa;\phi,\nu(\phi)]$, for some $\phi\in\kpi(\aa)$.
Clearly, $\mu\le\nu$ by the comparison of both valuations on $\phi$-expansions.

If $\mu<\nu$, then Proposition \ref{extensionlim} shows that $\deg(\mu)=\deg(\phi)>m=\deg(\mu_r)$. Thus, the valuation $\mu$ satisfies the Claim.  
\end{proof}

\begin{corollary}\label{degLim}
With the above notation, let $m_n=\deg(\mu_n)$ for all $n\ge0$. Then,
$$
\dg\left((-\infty,\nu)_{\Lambda}\right)=\begin{cases}
\{1=m_0,\,m_1,\dots,\,m_r\},& \mbox{ if $\nu$ has finite or quasi-finite depth $r$},\\
\{m_n\mid n\ge0\},& \mbox{ if $\nu$ has infinite depth }.
 \end{cases}$$
\end{corollary}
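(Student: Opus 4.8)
The plan is to use the (essentially unique) underlying MacLane-Vaqui\'e chain of $\nu$ supplied by Theorem \ref{main} to partition the totally ordered set $(-\infty,\nu)_\La$ into strata delimited by consecutive $\mu_n$'s, and then to read off $\dg$ of each stratum from the interval computations already established. Here I take $\La$ dense, as in Section \ref{subsecDegMax} (for a general $\La$ an interval $(\mu_n,\mu_{n+1})_\La$ may be empty, and one obtains only the inclusion ``$\subseteq$''); with this fixed, Theorem \ref{interval} gives that $(-\infty,\nu)_\La$ is totally ordered. As a preliminary step, I would note that the valuations $\mu_0<\mu_1<\cdots$ of the chain all lie in $(-\infty,\nu)_\La$ --- all of them in cases (b) and (c), and $\mu_0,\dots,\mu_{r-1}$ in case (a), where $\mu_r=\nu$: in case (b) because $\mu_n<\mu_r<\rho_i<\rhi=\nu$, and in case (c) because $\mu_n(\phi_{n+1})<\ga_{n+1}=\nu(\phi_{n+1})$ forces $\mu_n\neq\nu$. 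Since $\deg(\mu_n)=m_n$, this already places $m_n$ in $\dg((-\infty,\nu)_\La)$ for the relevant $n$.

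The heart of the argument is a \emph{stratification claim}: every $\rho\in(-\infty,\nu)_\La$ lies in $(-\infty,\mu_0)_\La$, or equals some $\mu_n$, or lies in some $(\mu_n,\mu_{n+1})_\La$, with the additional possibility $\rho\in(\mu_r,\nu)_\La$ in case (b). Since $(-\infty,\nu)_\La$ is totally ordered, $\rho$ is comparable with every $\mu_n$; set $n(\rho)=\max\{k:\mu_k\le\rho\}$ (and $n(\rho)=-1$ if $\rho<\mu_0$). The crucial point is that $n(\rho)<\infty$, i.e. that in case (c) nothing sits strictly above all $\mu_n$ and strictly below $\nu$: if $\mu_n<\rho<\nu$ for every $n$, then for any $f\in\kx$ one has $\deg(f)<m_N$ for some $N$ (the $m_n$ strictly increase), hence $f$ is stable along the chain and $\mu_N(f)=\nu(f)$; applying Corollary \ref{Phiclass}(2) to $\mu_N<\rho<\nu$ gives $\rho(f)=\mu_N(f)=\nu(f)$, and as $f$ was arbitrary $\rho=\nu$, a contradiction. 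In case (b) the analogue is exactly Lemma \ref{intervalLim}(1): anything strictly above all $\mu_n$ and below $\nu=\rhi$ lies in $(\mu_r,\rhi)_\La=\{\rho_\be\mid\be\in S\}$. Once $n(\rho)<\infty$ is known, maximality forces $\rho<\mu_{n(\rho)+1}$, so $\rho\in\{\mu_{n(\rho)}\}\cup(\mu_{n(\rho)},\mu_{n(\rho)+1})_\La$, or $\rho\in(-\infty,\mu_0)_\La$ when $n(\rho)=-1$. Hence $(-\infty,\nu)_\La$ is the disjoint union of $(-\infty,\mu_0)_\La$ with the pieces $\{\mu_n\}\sqcup(\mu_n,\mu_{n+1})_\La$ --- for $0\le n\le r-1$ in cases (a) and (b), for all $n\ge0$ in case (c) --- together with, in case (b) only, the extra piece $\{\mu_r\}\sqcup(\mu_r,\nu)_\La$; in case (a) one has $\mu_r=\nu$, so that the last such interval is $(\mu_{r-1},\nu)_\La$.

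It then remains to read off the degrees. By (\ref{mu0eq}), $\dg((-\infty,\mu_0)_\La)=\{1\}=\{m_0\}$, since $\mu_0=\omega_{0,\ga_0}$ has depth zero; $\deg(\mu_n)=m_n$ by definition; for an ordinary step, Lemma \ref{intervalOrd} gives $\dg((\mu_n,\mu_{n+1})_\La)=\{\deg(\phi_{n+1})\}=\{m_{n+1}\}$; for a limit step --- where by the MLV condition of Definition \ref{defMLV} the underlying essential continuous family has stable degree $\deg(\mu_n)=m_n$ --- Lemma \ref{intervalLim}(2) gives $\dg((\mu_n,\mu_{n+1})_\La)=\{m_n,\deg(\phi_{n+1})\}=\{m_n,m_{n+1}\}$; and in case (b), since by Theorem \ref{main}(b) the family $\aa$ has stable degree $\deg(\Phi_{\mu_r,\nu})=\deg(\mu_r)=m_r$, Lemma \ref{intervalLim}(1) gives $\dg((\mu_r,\nu)_\La)=\{m_r\}$. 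In every case $\dg((\mu_n,\mu_{n+1})_\La)\subseteq\{m_n,m_{n+1}\}$ and it contains $m_{n+1}$; taking the union over all strata, and using that the $m_n$ are strictly increasing so that no spurious value can appear, yields exactly $\{1=m_0,m_1,\dots,m_r\}$ in cases (a) and (b) and $\{m_n\mid n\ge0\}$ in case (c). In case (a) the value $m_r=\deg(\nu)$ is the one contributed by the final stratum $(\mu_{r-1},\nu)_\La$ computed above.

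The step I expect to be the main obstacle is the finiteness of $n(\rho)$ --- i.e. ruling out a valuation hiding strictly between an infinite MLV chain and its limit, and its quasi-finite counterpart. This is precisely where one must invoke the ``stable limit'' nature of the chain: eventual stability of every polynomial along an infinite MLV chain together with Corollary \ref{Phiclass}(2), and Lemma \ref{intervalLim}(1) (or Proposition \ref{stablelimit}) in case (b). Everything afterwards is bookkeeping on the interval descriptions of Lemmas \ref{intervalOrd}, \ref{intervalLim} and on (\ref{mu0eq}). A secondary point to keep in mind is the hypothesis on $\La$: the displayed equalities genuinely use that $\La$ is dense (as in Section \ref{subsecDegMax}); for a general $\La$, an ordinary augmentation step can produce an empty interval $(\mu_n,\mu_{n+1})_\La$, so the degree $m_{n+1}$ arising from such a step --- in particular $m_r$ in case (a) --- may fail to be realized, and only ``$\subseteq$'' remains.
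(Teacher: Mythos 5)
Your argument is correct and follows essentially the paper's own route: decompose the totally ordered set $(-\infty,\nu)_{\La}$ into the strata cut out by the chain nodes and read off the degrees from (\ref{mu0eq}), Lemma \ref{intervalOrd} and Lemma \ref{intervalLim}. The only difference is that you make explicit two points the paper leaves implicit --- that no valuation can sit strictly above all nodes in case (c) (your stability plus Corollary \ref{Phiclass}(2) argument) and the density hypothesis on $\La$ from Section \ref{subsecDegMax}, without which only the node degrees $m_n$ with $\mu_n<\nu$ are guaranteed to be realized --- and both observations are sound.
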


\begin{proof}
Since $(-\infty,\nu)_{\La}$ is totally ordered, we have
$$
(-\infty,\nu)_{\La}=(-\infty,\mu_0)_{\La}\;\cup\;[\mu_0,\mu_{1})_{\La}\;\cup\;\cdots\;\cup\;[\mu_{n-1},\mu_n)_{\La}\;\cup\;\cdots
$$

In equation (\ref{mu0eq}), we find a description of $(-\infty,\mu_0)_\La$. Also, Lemmas \ref{intervalOrd} and \ref{intervalLim} describe all subintervals $[\mu_n,\mu_{n+1})_{\La}$, and the  subinterval $[\mu_r,\nu)_\La$ if $\nu$ has quasi-finite depth. The result follows easily from these descriptions.
\end{proof}\e

Therefore, the length $r$ of a MLV-chain of $\nu$, and the degrees $m_n=\deg(\mu_n)$ of the nodes of the chain, are intrinsic data of $\nu$.


In particular, the  three situations of Theorem \ref{main} are mutually exclusive.

\begin{lemma}\label{mainCor}
Let $\nu$ be a valuation on $\kx$, with support $\p\in\op{Spec}(\kx)$.

It falls in case (a) of Theorem \ref{main} if and only if $\kpn\ne\emptyset$, or $\p\ne0$.

It falls in case (b) (or case (c)) of Theorem \ref{main} if and only if $\kpn=\emptyset$, $\p=0$ and $\nu$ has finite (or infinite) degree, respectively.

\end{lemma}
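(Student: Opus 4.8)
The strategy is to characterize each of the three cases of Theorem \ref{main} by intrinsic invariants of $\nu$, namely the support $\p$, the set $\kpn$, and (when these are trivial) the degree $\deg(\nu)$. The three cases of Theorem \ref{main} are mutually exclusive (as already observed via Corollary \ref{degLim}), so it suffices to prove the ``only if'' direction in each of the three equivalences; the ``if'' directions then follow by the partition.

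First I would dispose of case (a). If $\nu$ is the last node $\mu_r$ of a complete finite MLV-chain, then either $\ga_r=\infty$, in which case $\p=\phi_r\kx\ne0$ by Proposition \ref{extension}(1) (or Proposition \ref{extensionlim}(1)); or $\ga_r<\infty$, in which case $\phi_r\in\kpn$ by Proposition \ref{extension}(3) (resp. Proposition \ref{extensionlim}(3)), so $\kpn\ne\emptyset$. Thus case (a) forces $\kpn\ne\emptyset$ or $\p\ne0$. Conversely, cases (b) and (c) present $\nu$ as a stable limit of a continuous family over a parameter set with no maximal element, so Proposition \ref{stablelimit} gives $\p=0$ and $\kpn=\emptyset$. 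This already shows: case (a) $\iff$ ($\kpn\ne\emptyset$ or $\p\ne0$).

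Next I would separate (b) from (c) using the degree. In case (b), $\nu=\rhi$ is the stable limit of a continuous family of stable degree $m=\deg(\mu_r)$, and Lemma \ref{intervalLim}(1) gives $\dg\big((\mu_r,\nu)_{\La}\big)=\{m\}$; combined with Corollary \ref{degLim} this yields $\dg\big((-\infty,\nu)_{\La}\big)=\{1=m_0,\dots,m_r\}$, a finite set, so $\deg(\nu)=\sup\big(\dg((-\infty,\nu)_{(\gm)_\Q})\big)=m_r<\infty$. In case (c), Corollary \ref{degLim} gives $\dg\big((-\infty,\nu)_{\La}\big)=\{m_n\mid n\ge0\}$; since $m_n<m_{n+1}$ strictly along a complete infinite MLV-chain, this set is unbounded, so $\deg(\nu)=\infty$. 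Since cases (b) and (c) both satisfy $\kpn=\emptyset$ and $\p=0$ by the previous paragraph, and are distinguished by finiteness of $\deg(\nu)$, the remaining two equivalences follow.

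The only mildly delicate point — and the main thing to be careful about — is that in case (b) one must genuinely invoke the extra hypothesis appearing in the statement of Theorem \ref{main}(b) (namely that $\Phi_{\mu_r,\nu}$ has degree $\deg(\mu_r)$ and $\phi_r\notin\Phi_{\mu_r,\nu}$), which is exactly what guarantees the family has stable degree $m=\deg(\mu_r)$ and hence that Lemma \ref{intervalLim}(1) applies to compute $\dg\big((\mu_r,\nu)_{\La}\big)=\{m\}$. Everything else is a matter of quoting Proposition \ref{stablelimit}, Proposition \ref{extension}, Proposition \ref{extensionlim}, and Corollary \ref{degLim}, so I do not expect any real obstacle beyond bookkeeping the three cases against the three invariant conditions.
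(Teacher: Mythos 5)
Your proposal is correct and follows essentially the same route as the paper: it verifies the forward implications case by case (using $\ga_r=\infty$ versus $\ga_r<\infty$ for case (a), Proposition \ref{stablelimit} for cases (b) and (c), and the interval/degree computations of Lemma \ref{intervalLim} and Corollary \ref{degLim} to separate (b) from (c)), and then closes the converses by the mutual exclusivity of the three cases. The paper's argument is the same, just stated more tersely.
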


Indeed, suppose that $\nu$ admits a complete finite MLV-chain of length $r$. If $\ga_r=\infty$, then $\p=\phi_r\kx$. If $\ga_r<\infty$, then $\kpn\ne\emptyset$ because $\phi_r$ is a key polynomial for $\nu$.

If $\nu$ is the stable limit of a totally ordered family of valuations, it has $\p=0$ and  $\kpm=\emptyset$ by Proposition \ref{stablelimit}.

If $\nu$ falls in case (c), then clearly $\deg(\nu)=\infty$. Finally, if $\nu$ falls in case (b), the computation of $(\mu_,\nu)_{\gn}$ in Lemma \ref{intervalLim} shows that $\deg(\nu)=\deg(\mu_r)$.\e


We end this section with another immediate consequence of Theorem \ref{main}.

Let $\g\hookrightarrow \La$ be an embedding of ordered groups and let  $\La^{\com}\subset\La$ be the subgroup of all elements whose class in $\La/\g$ is a torsion element.

We say that $\La$ is a \emph{small extension} of $\g$ if $\La/\La^{\com}$ is a cyclic group.

\begin{corollary}\label{AllSmall}
Let $\nu$ be a valuation on $\kx$ extending the valuation $v$ on $K$. Then, $\gn$ is a small extension of $\g$.
\end{corollary}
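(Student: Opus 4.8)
The plan is to use the trichotomy of Theorem \ref{main} together with the chain of value groups already recorded in {\rm (\ref{chainGamma})}. The key observation is that ``being a small extension of $\g$'' is stable under the operations by which $\gn$ is built from $\g$: namely, passing from a group to one generated by adjoining a single element, and passing to an increasing union (a direct limit) of such groups. So the proof reduces to two elementary group-theoretic lemmas plus bookkeeping over the cases of Theorem \ref{main}.

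\textbf{Step 1: small extensions are closed under adjoining one element.} Suppose $\g\subset\g'$ is small, with $\g'/\g'^{\com}$ cyclic, and let $\g''=\gen{\g',\ga}$ for some $\ga$ in an ambient ordered group. I claim $\g\subset\g''$ is still small. Indeed $\g''^{\com}\supset\g'^{\com}$, and $\g''/\g'^{\com}$ is generated by the images of $\g'/\g'^{\com}$ (a cyclic group) together with the single image of $\ga$, hence is generated by two elements; its torsion subgroup is $\g''^{\com}/\g'^{\com}$, and the quotient $\g''/\g''^{\com}$ is a finitely generated torsion-free abelian group which is a quotient of a $2$-generated group, so it is free of rank $\le 2$. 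One then checks that in our situation it cannot have rank $2$: the image of $\g'/\g'^{\com}$ in $\g''/\g''^{\com}$ is cyclic, so if $\g''/\g''^{\com}$ had rank $2$ the class of $\ga$ would have to be independent of it, but then $\g''/\gen{\g',(\text{the torsion})}$ would still be infinite in a way that forces a relation — more cleanly, since $\gn$ is totally ordered, so is every subgroup, and a totally ordered abelian group of rank $2$ contains no cofinal cyclic subgroup, whereas here $\g''$ is generated over $\g'^{\com}$ by the cyclic $\g'/\g'^{\com}$ and one extra element; I will argue that the rank is $\le 1$ directly. (This rank count is the one genuinely fiddly point; see below.)

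\textbf{Step 2: small extensions are closed under increasing unions.} If $\g=\g_{-1}\subset\g_0\subset\g_1\subset\cdots$ is a chain with each $\g\subset\g_n$ small, then $\g\subset\bigcup_n\g_n$ is small: set $\g_\infty=\bigcup\g_n$, note $\g_\infty^{\com}=\bigcup\g_n^{\com}$ (torsion over $\g$ is detected at finite stage), so $\g_\infty/\g_\infty^{\com}=\varinjlim \g_n/\g_n^{\com}$ is a direct limit of cyclic groups, hence is a subgroup of $\Q$ or of $\Q/\Z$; being torsion-free (by construction of $\g^{\com}_\infty$) it embeds in $\Q$. A torsion-free subgroup of $\Q$ need not be cyclic, so here I must use more: in our chain the maps $\g_{n}/\g_n^{\com}\to\g_{n+1}/\g_{n+1}^{\com}$ come from augmentations, and by {\rm (\ref{chainGamma})} together with the computation $\g_{\mu_n,m_n}=\g_{\mu_{n-1}}$ and $e_n=(\g_{\mu_n}\colon\g_{\mu_{n-1}})$, each successive quotient $\g_{\mu_n}/\g_{\mu_{n-1}}$ is finite cyclic of order $e_n$. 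Hence $\g_\infty/\g$ is a torsion group (every element dies in some finite extension of finite index), so $\g_\infty^{\com}=\g_\infty$ and $\g_\infty/\g_\infty^{\com}=0$, which is trivially cyclic.

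\textbf{Step 3: assemble via Theorem \ref{main}.} In case (a), $\nu=\mu_r$ sits atop a finite complete MLV-chain; by {\rm (\ref{chainGamma})}, $\g=\g_{\mu_{-1}}\subset\g_{\mu_0}\subset\cdots\subset\g_{\mu_r}=\gn$ with $\g_{\mu_n}=\gen{\g_{\mu_{n-1}},\ga_n}$ at each step (when $\ga_n=\infty$ the group does not grow). Starting from $\g_{\mu_0}=\gen{\g,\ga_0}$, which is small by Step 1 applied once, induct with Step 1 to conclude $\g\subset\gn$ is small. In case (b), $\nu=\rhi$ is the stable limit of a continuous family of augmentations of $\mu_r$; by {\rm (\ref{samegroup})} the stable value group $\ginf$ equals $\g_{\mu_r}$, and $\gn=\ginf=\g_{\mu_r}$, so smallness follows from case (a). In case (c), $\gn=\bigcup_n\g_{\mu_n}$ by the ``chain of value groups'' bullet, and each $\g\subset\g_{\mu_n}$ is small by case (a); apply Step 2. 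The main obstacle is the rank bookkeeping in Step 1 — making sure that adjoining $\ga$ to a group that is already a small extension of $\g$ does not create a second independent free generator modulo torsion; the cleanest fix is to observe that the ambient group $\gn$ is totally ordered and to phrase smallness as ``$\g_{\mu_n}/\g$ has a cyclic torsion-free quotient'', which is then visibly preserved because $\g_{\mu_n}/\g_{\mu_{n-1}}$ is cyclic at every step, forcing $\g_{\mu_n}/\g$ to be generated over its torsion subgroup by at most one element by an immediate induction.
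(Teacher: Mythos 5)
Your reduction to the chain (\ref{chainGamma}) is the right outline, but Step 1 --- the claim that smallness is preserved under adjoining a single element --- is false as a statement about ordered abelian groups, and neither of your attempted repairs holds. Counterexample: take $\g=\Z\subset\R$, $\g'=\Z+\Z\sqrt2$ and $\ga=\sqrt3$. Then $(\g')^{\com}=\Z$ and $\g'/(\g')^{\com}$ is infinite cyclic, so $\g\subset\g'$ is small; but $\g''=\gen{\g',\ga}=\Z+\Z\sqrt2+\Z\sqrt3$ has $(\g'')^{\com}=\Z$ and $\g''/(\g'')^{\com}\simeq\Z^2$, which is not cyclic. The same example refutes your auxiliary claim that a totally ordered group of rank two has no cofinal cyclic subgroup ($\Z+\Z\sqrt2$ is archimedean, so every nonzero cyclic subgroup is cofinal), and it refutes the ``cleanest fix'': here $\g'/\g$ and $\g''/\g'$ are both cyclic, yet $\g''/\g$ is torsion-free of rank two, so it is not generated by one element over its torsion subgroup. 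Hence cyclicity of each successive quotient $\g_{\mu_n}/\g_{\mu_{n-1}}$ is not enough, and the induction you run in case (a) has a genuine gap.

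The missing ingredient is the arithmetic input recorded in section \ref{subsecDefMLV}: in a complete MLV-chain every node $\mu_n$ with $n<r$ admits a proper class of key polynomials, hence is commensurable over $v$ by Theorem \ref{incomm}, so $\ga_0,\dots,\ga_{r-1}\in\gq$; only the last value $\ga_r$ may be incommensurable or infinite. This is exactly what the paper's proof uses in case (a): $\gen{\g,\ga_0,\dots,\ga_{r-1}}_\Z$ is a commensurable extension of $\g$, hence lies in $\gn^{\com}$, and therefore $\gn/\gn^{\com}$ is generated by the image of the single element $\ga_r$, so it is cyclic. Your case (c) is essentially correct once you add the same input (all nodes of an infinite chain are residually transcendental, so each $e_n$ is \emph{finite} and $\gn/\g$ is torsion --- finiteness of $e_n$ does not follow merely from $e_n=(\g_{\mu_n}\colon\g_{\mu_{n-1}})$), and your case (b), via Lemma \ref{mu=rinf} and $\gn=\ginf=\g_{\mu_r}$, is fine but rests on case (a). The paper avoids all of this in cases (b) and (c) by quoting Proposition \ref{stablelimit} and Theorem \ref{empty}: a stable limit is valuation-algebraic, hence commensurable over $v$, so $\gn/\g$ is torsion and the extension is trivially small.
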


\begin{proof}
If $\nu$ falls in cases (b) or (c) of Theorem \ref{main}, the extension $\g\subset\gm$ is commensurable by Proposition \ref{stablelimit} and Theorem \ref{empty}. Thus, it is a small extension.
 
 In case (a), $\gn$ is generated by $\g$ and a finite family $\ga_0,\dots,\ga_r\in\gn$, from which $\ga_0,\dots,\ga_{r-1}$ are commensurable over $\g$ (cf. section \ref{subsecDefMLV}). 
 
Thus,  $\gn$ is obtained as a commensurable extension $\gen{\g,\ga_0,\dots,\ga_{r-1}}_\Z$ of $\g$, followed by an extension with cyclic quotient.
\end{proof}

\subsection{Unicity of MacLane-Vaqui\'e chains}\label{subsecUnicity}
Let $\nu$ be a valuation on $\kx$. For any positive integer $m$, consider the set:
\begin{equation}\label{Vm}
V_m=V_m(\nu)=\left\{\nu(f)\mid f\in\kx \mbox{ monic},\ \deg(f)=m\right\}\subset\gn\infty.
\end{equation}




Consider two complete infinite MacLane-Vaqui\'e chains 
$$
\mu_0\ \stackrel{\phi_1,\ga_1}\lra\  \mu_1\ \stackrel{\phi_2,\ga_2}\lra\ \cdots
\ \lra\ \mu_{n-1} 
\ \stackrel{\phi_{n},\ga_{n}}\lra\ \mu_{n} 
\ \lra\ \cdots 
$$
$$
\mu^*_0\ \stackrel{\phi^*_1,\ga^*_1}\lra\  \mu^*_1\ \stackrel{\phi^*_2,\ga^*_2}\lra\ \cdots
\ \lra\ \mu^*_{n-1} 
\ \stackrel{\phi^*_{n},\ga^*_{n}}\lra\ \mu^*_{n} 
\ \lra\ \cdots 
$$
having the same stable limit  $\nu$. 

By Corollary \ref{degLim}, $m_n=\deg(\mu_n)=\deg(\mu^*_n)$, for all $n\ge0$. 

For any limit augmentation $\mu_n\to\mu_{n+1}$ denote by $\mu_{n,\infty}=\rhi$ the stability function of the underlying essential continuous family $\aa$ of augmentations of $\mu_n$.  
Consider an analogous notation $\mu^*_{n,\infty}$ for the second MLV-chain.

\begin{theorem}\label{unicity}
For all $n\ge0$, $\ \g_{\mu_n}=\g_{\mu^*_n}$ and the following conditions hold.\e

(1) \ If the set $V_{m_n}$ defined in (\ref{Vm}) contains a maximal element, then 
$$\mu_n=\mu^*_n,\qquad  \ga_n=\ga^*_n=\max\left(V_{m_n}\right),$$
and the augmentation steps $\mu_n\to\mu_{n+1}$, $\mu_n\to\mu^*_{n+1}$ are both ordinary.\e

(2) \ If  $V_{m_n}$ does not contain a maximal element, then  $\mu_n\to\mu_{n+1}$, $\mu^*_n\to\mu^*_{n+1}$ are  limit augmentation steps whose essential continuous families of augmentations are equivalent. 
In particular, $\mu_{n,\infty}=\mu^*_{n,\infty}$.
\end{theorem}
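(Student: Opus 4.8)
The plan is to argue by induction on $n$, using the fact (Corollary \ref{degLim}) that the degree sequence $(m_n)_{n\ge0}$ is the same for both chains, together with the structural results describing how the valuation at the next node, and its building pair, are recovered from the data of the current node. First I would note that by Corollary \ref{degLim} we already know $m_n=\deg(\mu_n)=\deg(\mu^*_n)$ for all $n$; the real content is that $\mu_n=\mu^*_n$ (hence $\g_{\mu_n}=\g_{\mu^*_n}$) and that the type (ordinary vs. limit) of the step $n\to n+1$ is forced by intrinsic data of $\nu$. The base case $n=0$ holds because both chains are complete, so $\mu_0=\mu^*_0=\omega_{0,\nu(x)}$ is the unique depth-zero valuation below $\nu$ with $\mu_0(x)=\nu(x)$; note $\ga_0=\nu(x)=\max(V_1)$ is automatic since every monic linear $x-a$ has $\nu(x-a)\le\nu(x)$ by the ultrametric inequality and equality for $x$ itself, so $V_1$ always has a maximum. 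For the inductive step I assume $\mu_n=\mu^*_n$ and show the two cases.

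In case (1), assume $V_{m_n}$ has a maximal element $\ga:=\max(V_{m_n})=\nu(f)$ for some monic $f$ of degree $m_n$. Since $\mu_n<\nu$ and $f$ has degree $m_n=\deg(\mu_n)$, the polynomial $f$ lies in $\Phi_{\mu_n,\nu}$ precisely when $\mu_n(f)<\nu(f)$; the key step is to show that the building polynomial $\phi_{n+1}$ of the next step must have $\deg(\phi_{n+1})>m_n$ is \emph{impossible} here, i.e. the step $\mu_n\to\mu_{n+1}$ is ordinary with $\deg(\Phi_{\mu_n,\mu_{n+1}})=m_n$ — wait, the MLV condition forbids that for an ordinary step. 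So I must instead argue: if the step were a \emph{limit} augmentation, then $\Phi_{\mu_n,\mu_{n+1}}$ would be a class of degree $m_n$ coming from an essential continuous family $\aa$, and by Proposition \ref{nuIscont} combined with the argument of Case 2 of Theorem \ref{main}, $\nu(\Phi_{\mu_n,\nu})$ would contain no maximal element — contradicting the existence of $\ga$ via the chain of reasoning used in the proof of Theorem \ref{main} (the "set $A$ contains a maximal element" branch produces an \emph{ordinary} augmentation $\eta=[\mu_n;f,\ga]$ with $\deg(\eta)>$ forced, etc.). Hence the step is ordinary, and $\mu_{n+1}=[\mu_n;\phi_{n+1},\ga_{n+1}]$ with $\deg(\phi_{n+1})>m_n$. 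Then by Theorem \ref{univbound} applied inside $\mu_{n+1}$ and the maximality of $\ga$, one forces $\ga_{n+1}=\nu(\phi_{n+1})=\ga=\max(V_{m_n})$; the same holds for the starred chain, so $\ga_n=\ga^*_n$ and, applying Lemma \ref{unicityOrd} (together with $\deg(\phi_{n+1})=\deg(\phi^*_{n+1})=m_{n+1}$ from Corollary \ref{degLim} and the fact that both are key polynomials of minimal degree for $\mu_{n+1}=\mu^*_{n+1}$), we get $\mu_{n+1}=\mu^*_{n+1}$ and $\g_{\mu_{n+1}}=\g_{\mu^*_{n+1}}$. Actually a cleaner route: show directly that $\mu_{n+1}$ is the \emph{unique} element of $(-\infty,\nu)_\La$ of degree $m_{n+1}$ that is an ordinary augmentation of $\mu_n$ with building value $\ga$, using Lemma \ref{intervalOrd} and Theorem \ref{interval}; then $\mu_{n+1}$ is determined by $\nu$ alone.

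In case (2), assume $V_{m_n}$ has no maximal element. Then no ordinary augmentation of $\mu_n$ of degree $m_n$ sits below $\nu$ with maximal building value, and by the "set $A$ does not contain a maximal element" branch of the proof of Theorem \ref{main}, the step $\mu_n\to\mu_{n+1}$ must be a limit augmentation built from an essential continuous family $\aa$ of augmentations of $\mu_n$ of stable degree $m_n$ (a genuinely limit step is forced because $\deg(\mu_{n+1})=m_{n+1}>m_n$ and $\mu_{n+1}<\nu$, while the only candidates of degree $m_n$ below $\nu$ are the $\rho_i$'s, none of which equals $\mu_{n+1}$). The same applies to the starred chain, giving an essential continuous family $\aa^*$ of augmentations of $\mu^*_n=\mu_n$. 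Since $\rho_i<\nu$ for all $i$ and $\rho^*_j<\nu$ for all $j$, condition (3) of Lemma \ref{critEquiv} is satisfied with the common upper bound $\nu$, so $\aa$ and $\aa^*$ are equivalent; in particular $\mu_{n,\infty}=\mu_{\aa,\infty}=\mu_{\aa^*,\infty}=\mu^*_{n,\infty}$. It remains to deduce $\mu_{n+1}=\mu^*_{n+1}$ and $\g_{\mu_{n+1}}=\g_{\mu^*_{n+1}}$: here I invoke Lemma \ref{unicitylim}. Both $\phi_{n+1}\in\kpi(\aa)$ and $\phi^*_{n+1}\in\kpi(\aa^*)$ have degree $m_{n+1}$ (Corollary \ref{degLim}, since they are key polynomials of minimal degree for $\mu_{n+1}$ resp. $\mu^*_{n+1}$), so $\deg(\phi_{n+1})=\deg(\phi^*_{n+1})$. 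For the values: $\phi_{n+1}$ and $\phi^*_{n+1}$ are both monic of degree $m_{n+1}$, so $\nu(\phi_{n+1}),\nu(\phi^*_{n+1})\in V_{m_{n+1}}$; I'd show $\ga_{n+1}=\nu(\phi_{n+1})$ (it is the stable value, using $\mu_{n+1}\le\nu$ and $\phi_{n+1}$ a key polynomial for $\mu_{n+1}$ — more precisely, one identifies $\ga_{n+1}$ with $\max V_{m_{n+1}}$ if that exists, or else this is a case-(1) situation at level $n+1$ handled separately; if $V_{m_{n+1}}$ has no max then $\ga_{n+1}$ can be recovered as the value making $\mu_{n+1}$ the correct element of $(-\infty,\nu)_\La$). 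Then $\rho_{\aa,\infty}=\rho_{\aa^*,\infty}$, $\deg(\phi_{n+1})=\deg(\phi^*_{n+1})$, and $\ga_{n+1}=\ga^*_{n+1}$ hold, and $\rho_{\aa,\infty}(\phi^*_{n+1}-\phi_{n+1})\ge\ga_{n+1}$ follows because both $\phi_{n+1},\phi^*_{n+1}$ are key polynomials of minimal degree for the common valuation $\mu_{n+1}$ (which is $<\nu$, and they have equal $\nu$-value $=$ equal $\mu_{n+1}$-value $=\ga_{n+1}$), so their difference has higher value. Lemma \ref{unicitylim} then yields $\mu_{n+1}=\mu^*_{n+1}$, whence $\g_{\mu_{n+1}}=\g_{\mu^*_{n+1}}$.

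The main obstacle I anticipate is the careful bookkeeping needed to show that the \emph{type} of the augmentation step (ordinary versus limit) is intrinsically determined by whether $V_{m_n}$ has a maximal element, and to pin down $\ga_{n+1}$ intrinsically — the natural statement is that $\ga_{n+1}=\max V_{m_{n+1}}$ when that maximum exists, but when $V_{m_{n+1}}$ has no maximum one must instead characterize $\mu_{n+1}$ as the unique node of its degree in the totally ordered set $(-\infty,\nu)_\La$ (using Corollary \ref{degLim}, Lemma \ref{intervalOrd}, Lemma \ref{intervalLim}, and Theorem \ref{interval}), and then read off $\ga_{n+1}$ as $\mu_{n+1}(\phi_{n+1})$. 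I expect this "intrinsic characterization of each node" to be the part requiring the most care; once each $\mu_n$ is known to be the unique element of $(-\infty,\nu)_\La$ of degree $m_n$ that is not a limit of strictly smaller nodes, everything else — equality of value groups, the case dichotomy, and the equivalence of continuous families — follows from Lemma \ref{critEquiv}, Lemma \ref{unicityOrd}, and Lemma \ref{unicitylim} essentially formally.
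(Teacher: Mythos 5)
Your inductive scheme is unsound, and it aims at a stronger statement than the theorem actually makes. You take $\mu_n=\mu^*_n$ as an induction hypothesis and try to propagate it by proving $\mu_{n+1}=\mu^*_{n+1}$ in both cases; but node equality is precisely what the theorem does \emph{not} assert when $V_{m_n}$ has no maximal element, and it is false in general: whenever $\mu_n\to\mu_{n+1}$ is a limit augmentation with respect to $\mathcal{A}=(\rho_i)_{i\in A}$, one may re-base the step at any $\rho_{i_0}$ (as in Case 2 of Lemma \ref{singlestep}) and obtain a second complete MLV-chain of $\nu$ with a different node of degree $m_n$ — this is exactly the ``no canonical node'' phenomenon the paper describes after the theorem via the fibers of the degree map. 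Your base case is already wrong: the ultrametric inequality gives $\nu(x-a)\ge\min\{\nu(x),v(a)\}$, not $\nu(x-a)\le\nu(x)$, and $V_1$ need not have a maximum (in the paper's $p$-adic example $\nu(x-a_i)=i$ is unbounded, and both $\omega_{0,0}$ and $\omega_{a_i,i}$ serve as depth-zero nodes). For the same reason your appeal to Lemma \ref{unicitylim} in case (2) to force $\mu_{n+1}=\mu^*_{n+1}$ cannot succeed: you have no way to prove $\gamma_{n+1}=\gamma^*_{n+1}$ (whether $\gamma_{n+1}$ is intrinsic is governed by $V_{m_{n+1}}$, one level up), and your fallback — characterizing $\mu_{n+1}$ as ``the unique element of $(-\infty,\nu)_\Lambda$ of its degree'' — rests on a uniqueness that fails when that fiber has no maximal element. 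The paper's proof needs no induction at all: for each $n$ it uses only $\deg(\mu_n)=\deg(\mu^*_n)=m_n$ from Corollary \ref{degLim}; in case (1) it proves $\mu_n=\mu^*_n$ directly, and in case (2) it proves only the equivalence of the two continuous families (Lemma \ref{critEquiv}(3) with the common upper bound $\nu$ — the one step you do carry out correctly) and then obtains $\Gamma_{\mu_n}=\Gamma_{\mu^*_n}$ from properness of the classes $\Phi_{\mu_n,\mu_{n+1}}$, $\Phi_{\mu^*_n,\mu^*_{n+1}}$ via Lemma \ref{mu=rinf}; your proposal replaces this last step by the unobtainable node equality, so the value-group claim is not established either.

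In case (1) there is, in addition, an index slip and a missing core computation. What must be shown is $\gamma_n=\max V_{m_n}$, where $\gamma_n=\mu_n(\phi_n)$ and $\deg\phi_n=m_n$; you instead write $\gamma_{n+1}=\nu(\phi_{n+1})=\max(V_{m_n})$, which cannot be meant literally since $\deg\phi_{n+1}=m_{n+1}>m_n$. Moreover, the exclusion of a limit step is only gestured at (``by Proposition \ref{nuIscont} combined with the argument of Case 2 of Theorem \ref{main}''); the actual argument is: if the step were limit, all polynomials of degree $m_n$ would be stable for the essential family, so a maximizing $f$ would give $\nu(f)=\rho_i(f)\le\rho_i(\chi_i)=\beta_i<\beta_j=\nu(\chi_j)\in V_{m_n}$ by Theorem \ref{univbound}, a contradiction. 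Once the step is known to be ordinary, the MLV condition gives $\deg(f)=m_n<\deg\left(\Phi_{\mu_n,\nu}\right)$, hence $\mu_n(f)=\nu(f)\le\mu_n(\phi_n)=\gamma_n=\nu(\phi_n)\in V_{m_n}$, forcing $\gamma_n=\max V_{m_n}$; and finally $\mu_n=\mu^*_n$ must be \emph{proved} by comparing the two valuations on $\phi_n$-expansions (using $\mu^*_n(\phi_n)=\nu(\phi_n)=\gamma_n=\gamma^*_n$), a step absent from your proposal because you had assumed it inductively.
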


\begin{proof}
Suppose that $V_{m_n}$ contains a maximal element. 

Take $f\in\kx$ monic of degree $m_n$ such that $\nu(f)=\max(V_{m_n})$. 

Suppose that  $\mu_n\to\mu_{n+1}$ is a limit augmentation, and let $\aa=\cfa$, with $\rho_i=[\mu_n;\, \chi_i,\be_i]$, be the corresponding essential continuous family of augmentations of $\mu_n$. 

By the  MLV condition,  the stable degree of $\aa$ is $m_n$; thus, all polynomials in $\kx$ of degree $m_n$ are stable. 
By Proposition \ref{extensionlim}, there exists $i\in A$ such that $\rho_i(f)=\nu(f)$.

On the other hand, since $\chi_i$ is a key polynomial of degree $m_n$ for $\rho_i$, Theorem \ref{univbound} shows that $\rho_i(f)\le \rho_i(\chi_i)$, and this contradicts the maximality of $\nu(f)$:
$$
\nu(f)=\rho_i(f)\le \rho_i(\chi_i)=\be_i<\be_j=\nu(\chi_j)\in V_{m_n}.
$$

Therefore, the augmentation step $\mu_n\to\mu_{n+1}$ must be ordinary. 
By the MLV condition, 
\begin{equation}\label{primera}
\deg(f)=m_n<m_{n+1}=\deg\left(\Phi_{\mu_n,\mu_{n+1}}\right)=\deg\left(\Phi_{\mu_n,\nu}\right).
\end{equation}
Hence, $\mu_n(f)=\nu(f)$. 
Since $\phi_n\in\kp(\mu_n)$ has degree $m_n$ too,  Theorem \ref{univbound} shows that
\begin{equation}\label{segona}
\nu(f)=\mu_n(f)\le \mu_n(\phi_n)=\ga_n=\nu(\phi_n)\in V_{m_n}.
\end{equation}
Since $\nu(f)$ is maximal, we must have $\ga_n=\nu(f)=\max(V_{m_n})$.

This argument is valid for both MLV-chains. Thus, the augmentation steps $\mu_n\to\mu_{n+1}$, $\mu^*_n\to\mu^*_{n+1}$ are both ordinary and $\ga_n=\ga^*_n=\max(V_{m_n})$.

Finally, it is obvious that $\mu_n\le\mu_n^*$ by comparing the action of both valuations on $\phi_n$-expansions. A symmetric argument shows that $\mu_n=\mu^*_n$.\e

Now, suppose that $V_{m_n}$ does not contain a maximal element.

If the augmentation $\mu_n\to\mu_{n+1}$ is ordinary, equations (\ref{primera}) and (\ref{segona}) hold for any monic polynomial $f$ of degree $m_n$. They imply $\max(V_{m_n})=\ga_n$, contradicting the hypothesis. 

Thus,  $\mu_n\to\mu_{n+1}$ is a limit augmentation. 

Since $m_n=\deg(\mu_n)=\deg(\mu^*_n)$, this argument applies to the second chain too and  $\mu^*_n\to\mu^*_{n+1}$ is a limit augmentation. Let $\aa$, $\aa^*$ be the underlying essential continuous families of augmentations of $\mu_n$ and $\mu^*_n$, respectively. 

By the MLV condition, the stable degree of $\aa$ and $\aa^*$ is $m_n$.
Since $\aa$ and $\aa^*$ admit the valuation $\nu$ as a common upper bound, Lemma \ref{critEquiv} shows that they are equivalent.

Finally, since the classes $\Phi_{\mu_n,\mu_{n+1}}$ and $\Phi_{\mu^*_n,\mu^*_{n+1}}$ are proper, Lemma \ref{mu=rinf} shows that $\g_{\mu_n}=\g_\aa=\g_{\aa^*}=\g_{\mu^*_n}$. 
\end{proof}\e

These arguments yield completely analogous unicity results for MacLane-Vaqui\'e chains of valuations of finite, or quasi-finite depth.



Let $\left(\mu_n\right)_{0\le n}$ be a MLV-chain of $\nu$ of length $r\in\N\infty$. The different behaviour of ordinary and limit augmentations, as far as unicity is concerned, may be explained too by an analysis of the fibers of the order-preserving degree map
$$
\deg\colon (-\infty,\nu)_\La \lra \N.
$$

By Corollary \ref{degLim}, a non-empty fiber is of the form $\deg^{-1}(m_n)$ for some $n\ge0$. If this fiber has a maximal element, this element must be a node $\mu_n$ of the chain, and $\mu_n\to\mu_{n+1}$ is necessarily an ordinary augmentation. If this fiber has no maximal element, there is no canonical node  of degree $m_n$ in the chain.   

The following graphic of the degree function illustrates the situation.

\begin{center}
\setlength{\unitlength}{4mm}
\begin{picture}(30,10)
\put(3.8,3.85){$\bullet$}\put(6.8,6.25){$\bullet$}
\put(-1,2.1){\line(1,0){13}}\put(0,1.1){\line(0,1){8}}
\put(1.4,4.1){\line(1,0){2.7}}\put(4,6.5){\line(1,0){5}}
\put(-.2,4.1){\line(1,0){0.4}}\put(-.2,6.5){\line(1,0){0.4}}
\multiput(4,1.9)(0,.25){25}{\vrule height2pt}
\multiput(7,1.9)(0,.25){18}{\vrule height2pt}
\put(10,1.2){\begin{footnotesize}$(-\infty,\nu)_\La$\end{footnotesize}}
\put(3.5,1.2){\begin{footnotesize}$\mu_n$\end{footnotesize}}
\put(6.5,1.2){\begin{footnotesize}$\mu_{n+1}$\end{footnotesize}}
\put(-1.4,4){\begin{footnotesize}$m_n$\end{footnotesize}}
\put(-.6,1.3){\begin{footnotesize}$0$\end{footnotesize}}
\put(-2.2,6.4){\begin{footnotesize}$m_{n+1}$\end{footnotesize}}
\put(-1,8.8){\begin{footnotesize}$\N$\end{footnotesize}}
\put(2.5,0){\begin{scriptsize}ordinary augmentation\end{scriptsize}}

\put(20.8,3.85){$\bullet$}\put(24.8,6.25){$\bullet$}
\put(17,2.1){\line(1,0){13}}\put(18,1.1){\line(0,1){8}}
\put(19.4,4.1){\line(1,0){2.7}}\put(22,6.5){\line(1,0){5}}
\put(17.8,4.1){\line(1,0){0.4}}\put(17.8,6.5){\line(1,0){0.4}}
\multiput(21,1.9)(0,.25){8}{\vrule height2pt}
\multiput(22,1.9)(0,.25){25}{\vrule height2pt}
\multiput(25,1.9)(0,.25){18}{\vrule height2pt}
\put(28,1.2){\begin{footnotesize}$(-\infty,\nu)_\La$\end{footnotesize}}
\put(20.5,1.2){\begin{footnotesize}$\mu_n$\end{footnotesize}}
\put(24.5,1.2){\begin{footnotesize}$\mu_{n+1}$\end{footnotesize}}
\put(16.6,4){\begin{footnotesize}$m_n$\end{footnotesize}}
\put(17.4,1.3){\begin{footnotesize}$0$\end{footnotesize}}
\put(15.8,6.4){\begin{footnotesize}$m_{n+1}$\end{footnotesize}}
\put(17,8.8){\begin{footnotesize}$\N$\end{footnotesize}}
\put(21,0){\begin{scriptsize}limit augmentation\end{scriptsize}}
\end{picture}
\end{center}\bs

\section{Structure of $\ggn$ as a $\gg_v$-algebra}\label{secGmu}
Let $\nu$ be a valuation on $\kx$. 
By Theorem \ref{main}, there is a depth zero valuation $\mu_0=\omega_{a_0,\ga_0}$ and a finite, or countably infinite, MacLane-Vaqui\'e chain
of length $r\in\N\infty$,
\begin{equation}\label{depthMLVGr}
\mu_0\ \stackrel{\phi_1,\ga_1}\lra\  \mu_1\ \stackrel{\phi_2,\ga_2}\lra\ \cdots
\ \lra\ \mu_{n-1} 
\ \stackrel{\phi_n,\ga_n}\lra\ \mu_n 
\ \lra\ \cdots
\end{equation}
such that $\nu$ falls in one of the following three cases:\e

(a) \ $\nu=\mu_r$,\vskip0.1cm

(b) \ $\nu=\rhi$ is the stable limit of a continuous family $\aa$ of augmentations of $\mu_r$, of stable degree $\deg(\mu_r)$. Moreover, $\phi_r\not\in\Phi_{\mu_r,\nu}$.\vskip0.1cm

(c) \ $\nu=\lim_{n\to\infty}\mu_n$ is the stable limit of the infinite  MacLane-Vaqui\'e chain.\e

Let $\g_{-1}=\g$ and $\phi_0=x-a_0$. For all $n\ge0$, denote $m_n=\deg(\mu_n)=\deg(\phi_n)$ and
$$
\g_n=\g_{\mu_n},\qquad e_n=\left(\g_n\colon\g_{n-1}\right),\qquad \Delta_n=\Delta_{\mu_n},\qquad \ka_n=\ka(\mu_n).
$$

Since $\mu_n$ has finite depth, the following observation follows from Lemma \ref{mainCor}.

\begin{lemma}\label{empty2}\mbox{\null}
\begin{enumerate}
\item $\kp(\mu_n)=\emptyset$ if and only if $n=r$ and $\ga_r=\infty$.
\item $\mu_n/v$ is incommensurable if and only if $n=r$ and $\ga_r\not\in\gq\infty$.
\end{enumerate}
\end{lemma}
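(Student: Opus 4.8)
The plan is to localise both biconditionals at the last node $\mu_r$, relying on the structural properties of MacLane--Vaqui\'e chains listed in Section \ref{subsecDefMLV}. Two facts drive everything. First, for every $n\le r$ the truncation $(\mu_0,\dots,\mu_n)$ is again a complete finite MLV-chain, so $\mu_n$ has finite depth $n$, falls in case (a) of Theorem \ref{main}, and Lemma \ref{mainCor} applies to it, giving $\kp(\mu_n)\ne\emptyset$ or $\p_{\mu_n}\ne0$. Second, applying those structural properties to the truncations $(\mu_0,\dots,\mu_{n+1})$ for $n<r$ shows that $\Phi_{\mu_n,\mu_{n+1}}$ is a proper class of key polynomials for $\mu_n$; hence $\kp(\mu_n)\ne\emptyset$ and, by Theorem \ref{incomm}, $\mu_n/v$ is commensurable (so $\g_{\mu_n}\subset\gq$ under the canonical embedding), with $\ga_n\in\gq$ and $\g_{\mu_n}=\gen{\g_{\mu_{n-1}},\ga_n}$ as in (\ref{chainGamma}). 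Thus at every node with $n<r$ both left-hand sides in the lemma are false, so the right-hand sides force $n=r$, and only the last node needs analysis.

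For the \emph{only if} directions at $n=r$: if $\kp(\mu_r)=\emptyset$, then Lemma \ref{mainCor} forces $\p_{\mu_r}\ne0$; since $\mu_r$ is an ordinary or limit augmentation with building pair $(\phi_r,\ga_r)$, its support is $\phi_r\kx$ precisely when $\ga_r=\infty$, by Proposition \ref{extension}(1) (resp. \ref{extensionlim}(1)), so $\ga_r=\infty$. If instead $\mu_r/v$ is incommensurable, then $\ga_r=\infty$ is impossible since a valuation with non-trivial support is commensurable over $v$, and $\ga_r\in\gq$ is impossible since then $\g_{\mu_r}=\gen{\g_{\mu_{r-1}},\ga_r}\subset\gq$ would be commensurable over $v$; hence $\ga_r\notin\gq\infty$.

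For the \emph{if} directions: if $\ga_r=\infty$, then $\mu_r$ has non-trivial support $\phi_r\kx$, so by the isomorphism (\ref{isomL}) every non-zero homogeneous element of $\gg_{\mu_r}$ is a unit and $\kp(\mu_r)=\emptyset$ --- this is just Theorem \ref{empty}. If $\ga_r\notin\gq\infty$, then $\ga_r$ is a finite element of $\g_{\mu_r}=\gen{\g_{\mu_{r-1}},\ga_r}$ lying outside $\gq$ while $\g_{\mu_{r-1}}\subset\gq$; hence no non-zero multiple of $\ga_r$ lies in $\g$, so $\g_{\mu_r}/\g$ is not a torsion group and $\mu_r/v$ is incommensurable.

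I do not expect a genuine obstacle: the statement is bookkeeping on results already in hand, and it is indeed, as the lemma indicates, a consequence of Lemma \ref{mainCor} once one adds the value-group chain (\ref{chainGamma}) and part (1) of Propositions \ref{extension} and \ref{extensionlim}. The only points requiring a little care are confirming that the Section \ref{subsecDefMLV} properties --- stated there for complete infinite chains --- carry over verbatim to the nodes $\mu_n$ with $n<r$ of a finite chain (they do, by passing to truncations), and dealing with the degenerate case $r=0$, where $\mu_0=\omega_{a_0,\ga_0}$ is a depth zero valuation whose support and value group $\gen{\g,\ga_0}$ are read off directly from the definition rather than from Proposition \ref{extension}.
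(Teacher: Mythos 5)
Your argument is correct and follows essentially the same route as the paper, which simply records this lemma as a consequence of Lemma \ref{mainCor} combined with the structural facts of Section \ref{subsecDefMLV} (properness of the classes $\Phi_{\mu_n,\mu_{n+1}}$, the value-group chain (\ref{chainGamma}), and part (1) of Propositions \ref{extension} and \ref{extensionlim}); you have merely written out the bookkeeping, including the $r=0$ case, that the paper leaves implicit.
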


If $\op{KP}(\mu_n)\ne\emptyset$, we consider the following element in the graded algebra $\gg_{\mu_n}$:
$$q_n=\op{in}_{\mu_n}\phi_n\qquad\qquad\ \ \ \mbox{homogeneous prime element  of degree }\ga_n.$$

If $\mu_n$ is residually transcendental (that is, $\op{KP}(\mu_n)\ne\emptyset$ and $\mu_n/v$ commensurable), then we consider the following elements in $\gg_{\mu_n}$:
$$
\as{1.3}
\begin{array}{ll}
u_n & \qquad\mbox{homogeneous unit of degree }e_n\ga_n\in\g_{n-1},\\
\xi_n=q_n^{e_n}u_n^{-1}\in\Delta_n& \qquad\mbox{Hauptmodul of $\Delta_n$ over $\ka_n$}. 
\end{array}
$$

Let $\rho$ be any valuation such that $\mu_n<\rho\le\nu$.
By Corollary \ref{KerImGr}, the canonical homomorphism $\gg_{\mu_n}\to\gg_\rho$ maps the three elements $q_n$, $u_n$, $\xi_n$ to homogeneous units in $\gg_\rho$, which we denote by $x_n,\,u_n,\,z_n$, respectively. 

In this notation, we omit the reference to the valuation $\rho$. Actually, this will be a general convention on the notation of units of the graded algebras. \e

\noindent{\bf Convention. }Let $\eta<\rho$ be two valuations on $\kx$. Given a unit $u\in\gg_{\eta}^*$, we denote by the same symbol $u\in\gg_\rho^*$ the  image of $u$ under the canonical homomorphism $\gg_{\eta}\to\gg_{\rho}$.\e

In particular, if $\mu_n<\nu$, we get homogeneous units in $\ggn$:\vskip0.15cm

\begin{itemize}
\item $q_n\longmapsto x_n=\inm(\phi_n)$ \quad\qquad\qquad\qquad of degree $\ga_n$.
\item $u_n\longmapsto u_n$ \quad\ \qquad\qquad\qquad\qquad\qquad \,of degree $e_n\ga_n\in\g_{n-1}$.
\item $\xi_n\longmapsto z_n=x_n^{e_n}u_n^{-1}\in\ka(\nu)^*$ \qquad\!\qquad of degree zero. 
\end{itemize}\vskip0.15cm

Our aim, in this section, is to describe the structure of $\ggn$ as a $\gg_v$-algebra, in terms of the discrete data supported by any MacLane-Vaqui\'e chain of $\nu$. 

\subsection{Computation of $\ka(\nu)$ and the residue class field $\kn$}

Let $\ka=\ka(\nu)$ be the algebraic closure of $k$ in $\Delta=\Delta_\nu$. 

The canonical homomorphisms $\gg_{\mu_{n}}\to\,\gg_{\mu_{n+1}}\to\,\ggn$ induce a tower of field extensions
\begin{equation}\label{kappaTower} 
k=\ka_0\;\subset\;\ka_1\;\subset\; \cdots \;\subset\;  \ka_n\;\subset\; \cdots \;\subset\;\ka.
\end{equation}
Our first aim is to show that each field extension $\ka_{n+1}/\ka_n$ is finite.

\begin{lemma}\label{MinPolZi}
Suppose that $\mu_n<\nu$ and let $\Phi_{\mu_n,\nu}=[\phi]_{\mu_n}$. Then, the unit $z_n\in\ggn^*$ is algebraic over $\ka_n$ and its minimal polynomial over $\ka_n$ is the residual polynomial $R_{\mu_n,\phi_n,u_n}(\phi)$. 

In particular, $[\ka_n(z_n)\colon\ka_n]=\deg\left(\Phi_{\mu_n,\nu}\right)/e_nm_n$.
\end{lemma}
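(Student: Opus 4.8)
The plan is to exploit the residual polynomial operator $R=R_{\mu_n,\phi_n,u_n}$ attached to the residually transcendental valuation $\mu_n$ with its minimal-degree key polynomial $\phi_n$ and the fixed unit $u_n\in\gg_{\mu_n}^*$ of degree $e_nm_n$-grade (more precisely degree $e_n\ga_n$). Recall from Theorem \ref{comm} that $\Delta_n=\ka_n[\xi_n]$ with $\xi_n=q_n^{e_n}u_n^{-1}$ transcendental over $\ka_n$, so that the residual polynomial ring $\ka_n[y]$ is identified with $\Delta_n$ via $y\mapsto\xi_n$. First I would record that $\phi:=\phi_{n+1}$ (any representative of the class $\Phi_{\mu_n,\nu}=[\phi]_{\mu_n}$) is a proper key polynomial for $\mu_n$, since $\phi_n\not\in\Phi_{\mu_n,\nu}$ by the MLV condition (Definition \ref{defMLV}); in particular $\deg(\mu_n)=m_n<\deg(\phi)$ unless we are in the special limit situation, but in any case $\phi$ is $\mu_n$-minimal and $\mu_n$-irreducible. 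By Theorem \ref{charKP}, $R(\phi)$ is irreducible in $\ka_n[y]$ and $\deg(\phi)=m_ne_n\deg(R(\phi))$; this immediately gives the degree formula $[\ka_n(z_n):\ka_n]=\deg(R(\phi))=\deg(\Phi_{\mu_n,\nu})/(e_nm_n)$ once we know $z_n$ is a root of $R(\phi)$ and $R(\phi)$ is its minimal polynomial.

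The heart of the argument is therefore to show $R(\phi)(z_n)=0$ in $\ggn$, where $z_n$ is the image of $\xi_n$ under $\gg_{\mu_n}\to\ggn$. The key point is the defining property of the residual polynomial operator: if one writes the $\phi_n$-expansion of $\phi$ and collects terms according to the value $\mu_n$, the homogeneous element $\op{in}_{\mu_n}\phi\in\gg_{\mu_n}$ (which exists precisely because $\phi$ is $\mu_n$-minimal, so $\mu_n(\phi)=\min_s\mu_n(a_s\phi_n^s)$ by Lemma \ref{minimal0}) equals, up to a homogeneous unit coming from $u_n$ and a power of $q_n$, the polynomial $R(\phi)$ evaluated at $\xi_n$. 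This is the standard relation $\op{in}_{\mu_n}\phi = (\text{unit})\cdot\phi_n^{?}\cdot R(\phi)(\xi_n)$ built into the construction of $R$ in the cited reference \cite{KP}. Now I would pass to $\ggn$: since $\mu_n<\nu$ and $\phi\in\Phi_{\mu_n,\nu}$, we have $\mu_n(\phi)<\nu(\phi)$, so $\phi\mmu$-divides appropriately and the image of $\op{in}_{\mu_n}\phi$ under $\gg_{\mu_n}\to\ggn$ is $0$ by Corollary \ref{KerImGr}(1) (its kernel is exactly $(\op{in}_{\mu_n}\phi)\gg_{\mu_n}$, and $\op{in}_{\mu_n}\phi$ lies in it). Meanwhile the units $q_n\mapsto x_n$ and $u_n\mapsto u_n$ remain units in $\ggn$, so the vanishing of the image forces $R(\phi)(z_n)=0$ in $\ggn$, i.e. $z_n$ is algebraic over $\ka_n$ with $R(\phi)(z_n)=0$.

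It then remains to argue that $R(\phi)$ is the \emph{minimal} polynomial of $z_n$ over $\ka_n$, not merely an annihilating one. This follows from irreducibility: $R(\phi)$ is monic (after normalization) and irreducible in $\ka_n[y]$ by Theorem \ref{charKP}, and $z_n\ne 0$ (it is a unit), so the minimal polynomial of $z_n$ divides $R(\phi)$ and, being a nonconstant factor of an irreducible polynomial, equals it. One subtlety to check is the normalization: $R(\phi)$ as defined may not be monic, but its irreducibility means its monic associate is the minimal polynomial, and $\deg(R(\phi))$ is unaffected; I would phrase the statement with this understood, matching the cited conventions. The degree count then reads $[\ka_n(z_n):\ka_n]=\deg(R(\phi))$, and combining with $\deg(\Phi_{\mu_n,\nu})=\deg(\phi)=m_ne_n\deg(R(\phi))$ from Theorem \ref{charKP} yields $[\ka_n(z_n):\ka_n]=\deg(\Phi_{\mu_n,\nu})/(e_nm_n)$.

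The main obstacle I anticipate is making the identification $\op{in}_{\mu_n}\phi \sim R_{\mu_n,\phi_n,u_n}(\phi)(\xi_n)$ precise and correctly bookkeeping the grading: one must verify that the homogeneous degrees match (so that $R(\phi)(\xi_n)$, a degree-zero element times a suitable power of $q_n$ times a power of $u_n$, genuinely reproduces $\op{in}_{\mu_n}\phi$ of degree $\mu_n(\phi)$), and that passing to $\ggn$ does not accidentally kill the units or alter the identification — this is exactly where Corollary \ref{KerImGr} and the Convention on propagating units through $\gg_{\eta}\to\gg_\rho$ do the work. Everything else is a formal consequence of Theorems \ref{comm} and \ref{charKP} together with the MLV condition guaranteeing $\phi_n\not\in\Phi_{\mu_n,\nu}$ so that $\phi$ is a \emph{proper} key polynomial of degree a multiple of $e_nm_n$.
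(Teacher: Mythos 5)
Your proposal is correct and follows essentially the same route as the paper: invoke the decomposition of $\op{in}_{\mu_n}\phi$ via the residual polynomial operator from \cite{KP}, observe that $\op{in}_{\mu_n}\phi$ dies under $\gg_{\mu_n}\to\ggn$ (Corollary \ref{KerImGr}) while the units $x_n,u_n$ survive, deduce $R(\phi)(z_n)=0$, and conclude minimality and the degree formula from the irreducibility statement in Theorem \ref{charKP}. The only cosmetic difference is that the paper removes the possible $q_n$-power in the decomposition up front by checking $\phi_n\nmid_{\mu_n}\phi$ case by case (ordinary, limit, quasi-finite), whereas you dispose of it afterwards using that $x_n$ is a unit in $\ggn$; both are fine given that the MLV condition (and, in case (b), the hypothesis $\phi_r\notin\Phi_{\mu_r,\nu}$) guarantees $\phi_n\notin\Phi_{\mu_n,\nu}$.
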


\begin{proof}
If $\mu_n\to\mu_{n+1}$ is an ordinary augmentation with $\mu_{n+1}\le\nu$, then we may take $\phi=\phi_{n+1}$. By the MLV condition, $\deg(\phi_n)<\deg(\phi)$, so that $\phi_n\nmid_{\mu_n}\phi$.

Otherwise, there is a continuous family $\aa$ of augmentations of $\mu_n$ such that, either $\aa$ is  essential and $\mu_{n+1}=[\aa;\,\phi_{n+1},\ga_{n+1}]$ is a limit augmentation with $\mu_{n+1}\le\nu$, or $n=r$ and $\nu=\rhi$ is the stable limit of $\aa$ as indicated in case (b) of Theorem \ref{main}. 

In both cases, the class $\Phi_{\mu_n,\nu}=[\phi]_{\mu_n}$ has degree $m_n$ and $\phi_n$ does not belong to this class. Thus, we have again $\phi_n\nmid_{\mu_n}\phi$, by Corollary \ref{mid=sim}.

In all three cases, $\phi_n$ is a key polynomial for $\mu_n$ of minimal degree, and $\phi$ is a key polynomial for $\mu_n$ such that $\phi_n\nmid_{\mu_n}\phi$. 

Consider  the residual polynomial operator attached to $\mu_n$ in section \ref{subsecKP}:
$$R=R_{\mu_n,\phi_n,u_n}\colon \kx\to\ka_n[y].$$ 
By \cite[Thm. 5.3]{KP}, $\op{in}_{\mu_n} \phi$ admits the following decomposition in the graded algebra
$$
\op{in}_{\mu_n} \phi=\ep\,R(\phi)(\xi_n) \quad \mbox{ for some unit }\ep\in\gg^*_{\mu_n}.
$$
 By Corollary \ref{KerImGr}, the homomorphism $\gg_{\mu_n}\to\ggn$ vanishes on $\op{in}_{\mu_n} \phi$. Thus, it vanishes on  $R(\phi)(\xi_n)$ too; that is,  $R(\phi)(z_n)=0$.
This ends the proof, because $R(\phi)\in\ka_n[y]$ is a monic irreducible polynomial of degree $\deg(\phi)/e_nm_n$,  by Theorem \ref{charKP}.
\end{proof}

\begin{lemma}\label{kappaOrd}
For any augmentation step $\mu_n\to\,\mu_{n+1}\le\nu$, we have $\ka_{n+1}=\ka_n(z_n)$.

Moreover, if $\mu_n\to\,\mu_{n+1}$ is a limit augmentation, then $\ka_{n+1}=\ka_n(z_n)=\ka_n$.
\end{lemma}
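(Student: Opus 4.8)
The plan is to establish the two inclusions $\ka_n(z_n)\subseteq\ka_{n+1}$ and $\ka_{n+1}\subseteq\ka_n(z_n)$ separately, and then read off the limit-augmentation refinement from Lemma~\ref{MinPolZi}. The first inclusion is immediate: $\ka_n\subseteq\ka_{n+1}$ by the tower~(\ref{kappaTower}), while $z_n$ is the image in $\gg_{\mu_{n+1}}$ of $\xi_n$, a \emph{degree-zero unit} of $\gg_{\mu_{n+1}}$. Indeed $q_n=\op{in}_{\mu_n}\phi_n$ is not killed by $\gg_{\mu_n}\to\gg_{\mu_{n+1}}$, because $\phi_n\notin\Phi_{\mu_n,\mu_{n+1}}$ (the MLV condition in the limit case, degree reasons in the ordinary case), so by Corollary~\ref{KerImGr}(2) its image is a unit, hence so is $z_n=x_n^{e_n}u_n^{-1}$; therefore $z_n\in\Delta_{n+1}^*=\ka_{n+1}^*$.

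For the reverse inclusion the idea is to pin down $\Delta_{n+1}$ exactly. When $\mu_{n+1}$ is residually transcendental I would prove $\Delta_{n+1}=\ka_n(z_n)[\xi_{n+1}]$, and when $\mu_{n+1}$ is incommensurable or has nontrivial support, $\Delta_{n+1}=\ka_n(z_n)$. Granting this, $\ka_{n+1}$, being the algebraic closure of $k$ in $\Delta_{n+1}$, must equal $\ka_n(z_n)$: $\xi_{n+1}$ is transcendental over $k$ (Theorem~\ref{comm}), hence over the algebraic extension $\ka_n(z_n)/k$, so the polynomial ring $\ka_n(z_n)[\xi_{n+1}]$ adds nothing algebraic over $k$. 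When $\mu_n\to\mu_{n+1}$ is \emph{ordinary}, the graded identity $\gg_{\mu_{n+1}}=\mathcal G'[q_{n+1}]$, with $\mathcal G'$ the image of $\gg_{\mu_n}$ and $q_{n+1}=\op{in}_{\mu_{n+1}}\phi_{n+1}$, follows at once from the formula for the augmented valuation on $\phi_{n+1}$-expansions together with the fact that polynomials of degree $<m_{n+1}$ keep their $\mu_n$-value. Its degree-zero part consists of sums $\sum_s c_sq_{n+1}^s$ with $c_s\in\mathcal G'$ homogeneous of degree $-s\ga_{n+1}$; since that degree lies in $\g_{\mu_n}$ we get $e_{n+1}\mid s$, and writing $s=e_{n+1}t$ and using $q_{n+1}^{e_{n+1}}=\xi_{n+1}u_{n+1}$ with $u_{n+1}\in\mathcal G'$ rewrites each term as $(c_su_{n+1}^t)\xi_{n+1}^t$ with $c_su_{n+1}^t$ of degree zero, i.e.\ in the degree-zero part of $\mathcal G'$, which is the image of $\Delta_n=\ka_n[\xi_n]$, namely the field $\ka_n[z_n]=\ka_n(z_n)$ ($z_n$ algebraic over $\ka_n$ by Lemma~\ref{MinPolZi}). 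This gives $\Delta_{n+1}\subseteq\ka_n(z_n)[\xi_{n+1}]$, the opposite inclusion being clear; the cases $\ga_{n+1}=\infty$ (where $q_{n+1}=0$, so $\gg_{\mu_{n+1}}=\mathcal G'$) and $\ga_{n+1}\notin\gq$ (where $s\ga_{n+1}\notin\g_{\mu_n}$ for $s\neq 0$) collapse to $\Delta_{n+1}=\ka_n(z_n)$, whence $\ka_{n+1}=\ka_n(z_n)$.

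For a \emph{limit} augmentation step I would route through the underlying essential continuous family $\aa=(\rho_i)_{i\in A}$ of augmentations of $\mu_n$, exactly as in the proof of Lemma~\ref{singlestep}: each $\mu_n\to\rho_i$ is ordinary and $\rho_i\to\mu_{n+1}$ is a limit augmentation relative to $\aa_{>i}$. Applying the ordinary case just proved to $\mu_n\to\rho_i$ gives $\ka(\rho_i)=\ka_n(z')$ for $z'$ the image of $\xi_n$ in $\gg_{\rho_i}$; since $\Phi_{\mu_n,\rho_i}=[\chi_i]_{\mu_n}$ has degree $m_n=\deg(\mu_n)$, Lemma~\ref{MinPolZi} forces $[\ka(\rho_i):\ka_n]\le 1$, so $\ka(\rho_i)=\ka_n$ (and $e_n=1$). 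The same mechanism applied to $\rho_i\to\mu_{n+1}$, using that $\rho_i$ has relative ramification index one and that $\Phi_{\rho_i,\mu_{n+1}}$ again has degree $m_n=\deg(\rho_i)$, shows that the image of $\Delta_{\rho_i}=\ka_n[\xi_{\rho_i}]$ in $\Delta_{n+1}$ is $\ka_n$. Finally, every degree-zero unit of $\gg_{\mu_{n+1}}$ is, by Proposition~\ref{extensionlim}(2) (with Lemma~\ref{units} to reduce to degree $<m_{n+1}$, or a $\phi_{n+1}$-expansion when $\ga_{n+1}=\infty$), the image of an element of $\gg_{\rho_i}$ for $i$ large, hence lies in $\ka_n$; so $\ka_{n+1}=\ka_n$. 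The refinement $\ka_n(z_n)=\ka_n$ is then free: in the limit case $\deg(\Phi_{\mu_n,\mu_{n+1}})=m_n$, so Lemma~\ref{MinPolZi} makes the minimal polynomial of $z_n$ over $\ka_n$ linear.

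I expect the limit case to be the main obstacle. There $\gg_{\mu_{n+1}}$ is manufactured from the stability function $\rho_{\aa}$ of the continuous family rather than directly from $\gg_{\mu_n}$, so the clean description $\gg_{\mu_{n+1}}=\mathcal G'[q_{n+1}]$ is unavailable; passing through the ordinary augmentations $\rho_i$ repairs this, but one must check that $\aa_{>i}$ is still an essential continuous family of augmentations of $\rho_i$ admitting $\phi_{n+1}$ as a limit key polynomial, and that finitely many degree-zero units of $\gg_{\mu_{n+1}}$ can be realised simultaneously over one $\rho_i$.
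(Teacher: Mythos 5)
Your proof is correct, and its overall plan coincides with the paper's in the harder (limit) case, but the ordinary case is handled by a genuinely different route. The paper disposes of the ordinary step in two lines: by Lemma \ref{units} every element of $\ka_{n+1}^*=\Delta_{n+1}^*$ is $\op{in}_{\mu_{n+1}}a$ with $\deg(a)<m_{n+1}$, such an $a$ keeps its $\mu_n$-value, so the element lies in the image of $\Delta_n=\ka_n[\xi_n]$, which is $\ka_n(z_n)$. You instead reconstruct $\gg_{\mu_{n+1}}$ as the image of $\gg_{\mu_n}$ adjoined with $q_{n+1}$, compute its degree-zero part via the divisibility $e_{n+1}\mid s$ (with the degenerate cases $\ga_{n+1}=\infty$ and $\ga_{n+1}\notin\gq$ treated separately), and then take the algebraic closure of $k$ using the transcendence of $\xi_{n+1}$; this is more work (and quietly re-proves a structural fact about augmentations that the paper only needs later, in Theorem \ref{ggm}), but it buys you the explicit description $\Delta_{n+1}=\ka_n(z_n)[\xi_{n+1}]$ rather than extracting $\ka_{n+1}$ alone. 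In the limit case you follow the paper's path essentially verbatim: reduce to $\deg(a)<m_{n+1}$ by Lemma \ref{units} (or a $\phi_{n+1}$-expansion when $\ga_{n+1}=\infty$), use stability to land in $\gg_{\rho_i}$, apply the ordinary case to $\mu_n\to\rho_i$, and use Lemma \ref{MinPolZi} to force the degree to be $1$. One point where you are actually more careful than the paper: the paper asserts that $\op{in}_{\rho_i}(a)$ is a unit of $\ka(\rho_i)$, whereas you only use that it lies in $\Delta_{\rho_i}=\ka_n[\xi_{\rho_i}]$ and separately check, by the same MinPolZi mechanism applied to $\rho_i<\mu_{n+1}$, that the image of $\xi_{\rho_i}$ is already in $\ka_n$; this closes a small gap rather than opening one. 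Your final worry about realising several degree-zero units over a single $\rho_i$ is unnecessary, since the argument is carried out one unit at a time.
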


\begin{proof}
Let $\ep\in\ka_{n+1}^*=\Delta_{n+1}^*$. If $\kp(\mu_{n+1})\ne\emptyset$, then  Lemma \ref{units} shows that a unit of degree zero in the graded algebra may be written as
\begin{equation}\label{ep=}
\ep=\op{in}_{\mu_{n+1}}a,\quad a\in\kx,\quad \deg(a)<m_{n+1},\quad \mu_{n+1}(a)=0.
\end{equation}

If $\kp(\mu_{n+1})=\emptyset$, then $\mu_{n+1}=\nu$ and $\ga_{n+1}=\infty$, by Lemma \ref{empty2}. Thus, $\nu$ may be identified with a valuation on the field $\kx/\phi_{n+1}\kx$, of degree $m_{n+1}$ over $K$. By Theorem \ref{empty}, $k=\Delta=\kn$, so that the equality (\ref{ep=}) holds trivially too.  

Suppose that the augmentation $\mu_n\to\,\mu_{n+1}$ is ordinary.
Since $\phi_{n+1}\nmid_{\mu_n}a$, Proposition \ref{extension} shows that $\mu_n(a)=\mu_{n+1}(a)=0$. Hence, $\op{in}_{\mu_n}a$ belongs to $\Delta_n$, and $\ep=\op{in}_{\mu_{n+1}}a$ belongs to the image of the ring homomorphism $\Delta_n\to\Delta_{n+1}$. By Theorem \ref{comm}, $\Delta_n=\ka_n[\xi_n]$, and we deduce that
$$
\ep\in\im\left(\Delta_n\to\Delta_{n+1}\right)=\ka_n[z_n]=\ka_n(z_n).
$$

Now, suppose that  $\mu_n\to\,\mu_{n+1}$ is a limit augmentation with respect to  $$\aa=\cfa, \qquad \rho_i=[\mu_n;\,\chi_i,\be_i],$$ an essential continuous family of augmentations of $\mu_n$, of stable degree $m_n$. 

The condition $\deg(a)<m_{n+1}=\deg(\phi_{n+1})$ implies that $a$ is $\aa$-stable. Hence, there exists $i\in A$ such that $\rho_i(a)=\mu_{n+1}(a)=0$.
This means that $\ep$ is the image of the unit $\op{in}_{\rho_{i}}(a)\in\ka(\rho_{i})$.

On the other hand, if we apply to the augmentation $\mu_n\to\rho_i$ the arguments used in the ordinary case,  we obtain   
$\ka(\rho_{i})=\ka_n(z_n)$ as well, where $z_n\in\ka(\rho_i)$ is the image of $\xi_n$ under the homomorphism $\gg_{\mu_n}\to\gg_{\rho_i}$.

The relative ramification index of $\rho_i$ is equal to one (section \ref{subsecCFA}).
Since $$\Phi_{\mu_n,\mu_{n+1}}=\Phi_{\mu_n,\rho_i}=[\chi_i]_{\mu_n},$$ Lemma \ref{MinPolZi} shows that $z_n\in\ka(\rho_i)$ has degree $m_n/m_n=1$ over $\ka_n$.

Therefore,  $\ka(\rho_{i})=\ka_n(z_n)=\ka_n$, and $\ep$ is the image of some element in $\ka_n^*$.   
\end{proof}

\begin{theorem}\label{kappa}
If $\nu=\mu_r$ has finite depth, then
$$
\ka=k(z_0,\dots,z_{r-1}),\qquad \kn=
\begin{cases}
\ka(\xi_r),&\mbox{ if }\ga_r\in\gq,\\
\ka,&\mbox{ if }\ga_r\not\in\gq.
\end{cases}
$$

Suppose that $\nu=\rhi$ is the stable limit of a continuous family $\aa$ of augmentations of $\mu_r$, of stable degree $m_n$ and such that $\phi_r\not\in\Phi_{\mu_r,\nu}$. Then,
$$
\ka=k(z_0,\dots,z_{r-1})=\kn.
$$

If $\nu=\lim_{n\to\infty}\mu_n$ is the stable limit of an infinite  MacLane-Vaqui\'e chain, then
$$
\ka=k(z_0,\dots,z_{i},\dots)=\kn.
$$
\end{theorem}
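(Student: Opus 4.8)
The plan is to treat the three cases of Theorem~\ref{main} separately, in each first determining $\ka$ and then reading off $\kn$. Iterating Lemma~\ref{kappaOrd} along the chain gives $\ka_n=k(z_0,\dots,z_{n-1})$ for all $n\ge0$, with each $z_n$ algebraic over $\ka_n$ by Lemma~\ref{MinPolZi}; this is precisely the tower (\ref{kappaTower}). When $\nu=\mu_r$ has finite depth, then $\ka=\ka_r=k(z_0,\dots,z_{r-1})$ at once. For $\kn$: if $\ga_r\in\gq$, Lemma~\ref{empty2} shows $\mu_r$ is residually transcendental, so $\kn\simeq\ka(\xi_r)$ by Theorem~\ref{comm}; if $\ga_r\not\in\gq$, Lemma~\ref{empty2} forces either $\ga_r=\infty$ (then $\kpn=\emptyset$ and Theorem~\ref{empty} applies) or $\mu_r/v$ incommensurable (Theorem~\ref{incomm}), and in both subcases $\ka=\Delta_\nu=\kn$.

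\emph{Cases (b) and (c): reducing $\kn$ to $\ka$.} In both of these cases $\nu$ is the stable limit of a continuous family of augmentations whose index set has no maximal element, so by Proposition~\ref{stablelimit} $\nu$ has trivial support and $\kpn=\emptyset$; hence $\Delta_\nu=\kn=\ka$ by Theorem~\ref{empty}, and it remains only to compute $\ka$.

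\emph{The core step.} The inclusions $\bigcup_n\ka_n\subseteq\ka$ (case (c)) and $\ka_r\subseteq\ka$ (case (b)) follow from the tower; the content is the reverse inclusion. Given $\ep\in\ka^*=\Delta_\nu^*$, write $\ep=\inn f$ with $\nu(f)=0$, which is possible because the support is trivial. In case (c), $f$ is stable, so $\mu_n(f)=0$ for $n$ large and $\ep$ is the image of $\op{in}_{\mu_n}f\in\Delta_n$; since $\mu_n$ is residually transcendental, $\Delta_n=\ka_n[\xi_n]$ is a polynomial ring (Theorem~\ref{comm}), so writing $\op{in}_{\mu_n}f=P(\xi_n)$ with $P\in\ka_n[Y]$ and pushing to $\ggn$ yields $\ep=P(z_n)\in\ka_n[z_n]=\ka_n(z_n)=k(z_0,\dots,z_n)\subseteq\bigcup_m\ka_m$, using that $z_n$ is algebraic over $\ka_n$. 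Thus $\ka=\bigcup_n\ka_n=k(z_0,z_1,\dots)$. In case (b) we run the same argument with the nodes $\rho_i$ of the continuous family $\aa=\cfa$ of augmentations of $\mu_r$ in place of the $\mu_n$: since $\phi_r\not\in\Phi_{\mu_r,\nu}$ the class $\Phi_{\mu_r,\nu}=[\chi_i]_{\mu_r}$ is proper, whence $\erel(\mu_r)=1$; the proof of Lemma~\ref{kappaOrd} applied to the ordinary augmentation $\mu_r\to\rho_i$ then gives $\ka(\rho_i)=\ka_r$ and $\Delta_{\rho_i}=\ka_r[\xi_{\rho_i}]$, and, as $\rho_i$ has relative ramification index one and $\deg(\Phi_{\rho_i,\nu})=m_r=\deg(\rho_i)$, the argument of Lemma~\ref{MinPolZi} shows the image of $\xi_{\rho_i}$ in $\ggn$ is algebraic of degree one over $\ka(\rho_i)=\ka_r$, hence lies in $\ka_r$. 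Writing $\op{in}_{\rho_i}f=P(\xi_{\rho_i})$ with $P\in\ka_r[Y]$ for $i$ large then gives $\ep\in\ka_r$, so $\ka=\ka_r=k(z_0,\dots,z_{r-1})$.

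\emph{Main obstacle.} The only delicate point is this reverse inclusion: a priori $\ka(\nu)$ could contain algebraic elements not visible at any finite stage of the chain. It is ruled out by combining (i) stability of every polynomial, which brings each degree-zero unit of $\ggn$ down from a finite stage, (ii) the polynomial-ring structure $\Delta_n=\ka_n[\xi_n]$, and (iii) the algebraicity over $\ka_n$ of the image $z_n$ of the Hauptmodul in the limit, so that a polynomial expression in $\xi_n$ collapses in the limit to an element of the finite extension $\ka_n(z_n)$. One must also watch the ramification-index bookkeeping in case (b), where the properness of $\Phi_{\mu_r,\nu}$ is exactly what forces $\erel(\mu_r)=1$ and blocks any degree growth at the last ordinary augmentation before the limit.
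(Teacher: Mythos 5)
Your argument is correct and follows essentially the same route as the paper: case (a) by iterating Lemma \ref{kappaOrd}, and cases (b)--(c) by combining Proposition \ref{stablelimit} and Theorem \ref{empty} with stability to pull each degree-zero unit of $\ggn$ back to a finite node, exactly as in the paper's proof (your explicit use of $\Delta_n=\ka_n[\xi_n]$ and of Lemma \ref{MinPolZi} merely fills in details the paper leaves implicit). One small precision: in case (b), properness of $\Phi_{\mu_r,\nu}$ alone does not yield $\erel(\mu_r)=1$; you also need that this class has the minimal degree $m_r=\deg(\mu_r)$ (which is part of your hypotheses), since when $\erel(\mu_r)>1$ all minimal-degree key polynomials are $\mu_r$-equivalent and only the classes of higher degree are proper.
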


\begin{proof}
If $\nu=\mu_r$ has finite depth, then $\ka=\ka_r=k(z_0,\dots,z_{r-1})$ by Lemma \ref{kappaOrd}.

If $\nu$ has quasi-finite or infinite depth, then $\kpm=\emptyset$ and every non-zero homogeneous element in $\ggn$ is a unit, by Proposition \ref{stablelimit}. Hence, 
any $\ep\in\ka^*=\Delta^*$ may be written as
$$
\ep=\inn f,\qquad f\in\kx,\qquad  \nu(f)=0.
$$

Suppose that $\nu$ is the stable limit of a continuous family $\aa=\cfa$ of augmentations of $\mu_r$, of stable degree $m_r$ such that $\phi_r\not\in\Phi_{\mu_r,\nu}$.

Since $f$ is stable, there exists an index $i\in A$ such that $\rho_i(f)=\nu(f)=0$. 
Thus, $\ep$ is the image of the unit $\op{in}_{\rho_i}f\in\ka(\rho_{i})$.
Now, the arguments in the proof of Lemma \ref{mainCor} show that $\ka(\rho_{i})=\ka_r$. Therefore, we have again $\ka=\ka_r=k(z_0,\dots,z_{r-1})$.

Finally, if $\nu=\lim_{n\to\infty}\mu_n$, there exists an index $n\ge0$ such that $\mu_n(f)=\nu(f)=0$. 
Thus, $\ep$ is the image of the unit $\op{in}_{\mu_n}f\in\ka_n$. Therefore, in this case we have
$$
\ka=\bigcup\nolimits_{n\ge0}\ka_n.$$

The statements about $\kn$ follow from Proposition \ref{stablelimit} and Theorems \ref{empty}, \ref{incomm}, \ref{comm}.
\end{proof}


\begin{corollary}\label{uniquekappa}
Up to natural isomorphisms, the tower (\ref{kappaTower}) of finite field extensions
is independent of the MacLane-Vaqui\'e chain underlying $\nu$.
\end{corollary}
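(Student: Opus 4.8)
The plan is to show that the tower (\ref{kappaTower}) is in fact intrinsic to $\nu$, by reconstructing it from the degree filtration of the totally ordered set $(-\infty,\nu)_\La$ rather than from a chosen chain. First I would recall from Corollary \ref{degLim} that the set $\dg\left((-\infty,\nu)_\La\right)$ depends only on $\nu$; listing its elements in increasing order as $m_0<m_1<\cdots$ recovers the degrees $\deg(\mu_n)=m_n$ of the nodes of any \mlv chain of $\nu$, as well as its length $r$ and its depth type. Since $(-\infty,\nu)_\La$ is totally ordered (Theorem \ref{interval}) and $\deg$ is order-preserving, each fibre $F_n=\deg^{-1}(m_n)\subset(-\infty,\nu)_\La$ is convex, the $F_n$ occur in increasing order, and the $n$-th node of any chain of $\nu$ lies in $F_n$ with $\ka_n=\ka(\mu_n)$.

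The heart of the argument will be the claim that \emph{if $\rho\le\rho'$ both lie in $(-\infty,\nu)_\La$ and $\deg(\rho)=\deg(\rho')$, then the canonical homomorphism $\gg_\rho\to\gg_{\rho'}$ restricts to an isomorphism $\ka(\rho)\xrightarrow{\sim}\ka(\rho')$}. This restriction is a homomorphism of fields, hence injective, so only surjectivity is at issue. Given $\ep'\in\ka(\rho')^*=\Delta_{\rho'}^*$, I would use Lemma \ref{units} (applicable since $\rho'<\nu$ forces $\kp(\rho')\ne\emptyset$ by Proposition \ref{gap}) to write $\ep'=\op{in}_{\rho'}a$ with $a\in\kx$ nonzero, $\deg(a)<\deg(\rho')$ and $\rho'(a)=0$. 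Every $\phi\in\Phi_{\rho,\rho'}$ is a key polynomial for $\rho$ (Proposition \ref{gap}), so $\deg(\phi)\ge\deg(\rho)=\deg(\rho')>\deg(a)$, and the $\rho$-minimality of $\phi$ gives $\phi\nmid_\rho a$; by Proposition \ref{gap} this yields $\rho(a)=\rho'(a)=0$. Then $\op{in}_\rho a$ lies in $\Delta_\rho$ and, being of the form $\op{in}_\rho a$ with $\deg(a)<\deg(\rho)$, is a unit by Lemma \ref{units}, so $\op{in}_\rho a\in\ka(\rho)^*$; its image in $\gg_{\rho'}$ is $\op{in}_{\rho'}a=\ep'$. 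I expect this claim to be the main obstacle: the equal-degree hypothesis is genuinely needed (a limit augmentation would raise the degree and break the divisibility step), so the whole point is to confine oneself to a single fibre, where the rest is bookkeeping.

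Granting the claim, the corollary follows formally. Let $\left(\mu_n\right)$, $\left(\mu^*_n\right)$ be two \mlv chains of $\nu$; by the first paragraph they have the same degree sequence $\left(m_n\right)$ and the same length, and $\ka_n=\ka(\mu_n)$, $\ka^*_n=\ka(\mu^*_n)$. For each $n$ with $\mu_n,\mu^*_n<\nu$, both nodes lie in the convex set $F_n$, hence so does $\sg_n:=\min\{\mu_n,\mu^*_n\}$, with $\sg_n\le\mu_n$ and $\sg_n\le\mu^*_n$; the claim provides canonical isomorphisms $\ka(\sg_n)\xrightarrow{\sim}\ka_n$ and $\ka(\sg_n)\xrightarrow{\sim}\ka^*_n$, whose composite $\al_n\colon\ka_n\xrightarrow{\sim}\ka^*_n$ is the desired identification (for a finite chain one takes $\al_r=\op{id}$ on $\ka_r=\ka(\nu)=\ka^*_r$). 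Finally I would check that the $\al_n$ respect the tower inclusions: because $\deg$ is order-preserving and $m_n<m_{n+1}$, the valuations $\sg_n,\mu_n,\mu^*_n,\sg_{n+1},\mu_{n+1},\mu^*_{n+1}$ are pairwise comparable with $\sg_n\le\mu_n,\mu^*_n$ and $\mu_n,\mu^*_n<\sg_{n+1}\le\mu_{n+1},\mu^*_{n+1}$; since all the relevant field maps are restrictions of the canonical graded-algebra homomorphisms $\gg_{\rho}\to\gg_{\rho'}$, which compose correctly along a totally ordered family of valuations, the square built from $\al_n$, $\al_{n+1}$ and the inclusions $\ka_n\hookrightarrow\ka_{n+1}$, $\ka^*_n\hookrightarrow\ka^*_{n+1}$ commutes. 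Hence $\left(\al_n\right)_n$ is an isomorphism of towers of extensions of $k$; in particular, up to canonical isomorphism, $\ka_n$ depends only on $\nu$, being $\ka(\rho)$ for any $\rho<\nu$ of degree $m_n$, which proves the corollary.
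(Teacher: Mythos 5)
Your proof is correct, but it reaches the corollary by a genuinely different route than the paper. The paper treats this statement as a direct consequence of the unicity analysis of section 4.3: Theorem \ref{unicity} shows that the two chains have literally equal nodes $\mu_n=\mu^*_n$ at every index where $V_{m_n}$ has a maximal element (precisely the ordinary steps, where the tower actually grows via $\ka_{n+1}=\ka_n(z_n)$, Lemmas \ref{MinPolZi} and \ref{kappaOrd}), while at the remaining indices the steps are limit augmentations with equivalent continuous families and $\ka_{n+1}=\ka_n$, so nothing new needs to be matched. You bypass Theorem \ref{unicity} altogether. Your key claim — that for comparable $\rho\le\rho'$ in $(-\infty,\nu)_\La$ with $\deg(\rho)=\deg(\rho')$ the canonical homomorphism $\gg_\rho\to\gg_{\rho'}$ restricts to an isomorphism $\ka(\rho)\simeq\ka(\rho')$ — is proved correctly from Lemma \ref{units}, Proposition \ref{gap} and $\rho$-minimality (injectivity because it is a field homomorphism landing in $\Delta_{\rho'}^*=\ka(\rho')^*$, surjectivity by your divisibility argument), and combined with Theorem \ref{interval}, the order-preservation of $\deg$ from Corollary \ref{KerImGr}\,(3), and the intrinsic degree sequence of Corollary \ref{degLim}, it yields canonical isomorphisms $\al_n\colon\ka_n\to\ka^*_n$ compatible with the tower maps by transitivity of the canonical homomorphisms. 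What your route buys: it is self-contained, and it gives the slightly sharper statement that $\ka_n\simeq\ka(\rho)$ canonically for \emph{any} $\rho<\nu$ of degree $m_n$, so the tower is read off directly from the degree filtration of $(-\infty,\nu)_\La$. What it does not give is the finer content of Theorem \ref{unicity} (equality of nodes at ordinary steps, equivalence of the continuous families, equality of value groups), which the paper needs anyway for the other intrinsic data of section 5. Two small points to tidy: treat the case $\rho=\rho'$ trivially, so that $\Phi_{\rho,\rho'}$ is only invoked when $\rho<\rho'$; and note that the same transitivity argument also gives compatibility of the $\al_n$ with the inclusions $\ka_n\subset\ka$ at the top of the tower in the quasi-finite and infinite depth cases.
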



\subsection{Structure of $\ggn$ as a $\gg_v$-algebra}
We keep the notation $\ka=\ka(\nu)$ and $\Delta=\Delta_\nu$. 

The embedding of graded $k$-algebras $\gg_v\hookrightarrow\ggn$ induces an embedding of graded $\ka$-algebras:
$$
\left(\gg_v\otimes_k\ka\right)\hooklongrightarrow\ggn.
$$

In this section, we find an explicit description of $\ggn$ as a $\left(\gg_v\otimes_k\ka\right)$-algebra. Together with
Theorem \ref{kappa}, which computes $\ka$ in terms of discrete data of the MacLane-Vaqui\'e chain of $\nu$, we obtain an explicit description of the structure of $\ggn$ as a $\gg_v$-algebra.

\begin{lemma}\label{intrinsice}
Let $\mu_n<\nu$ be a node of the MacLane-Vaqui\'e chain of $\nu$.
For any $\alpha\in\g_n$, there exists a homogeneous unit $u\in\gg_{\mu_n}^*$ of degree $\al$ if and only if $\alpha\in\g_{n-1}$.
In this case, $$\left(\pset_\al(\mu_n)/\pset^+_\al(\mu_n)\right)\cap \gg_{\mu_n}^*=\ka^*_nu.$$
\end{lemma}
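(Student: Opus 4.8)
The plan is to derive both parts of the statement from Lemma \ref{units} together with the group identity $\g_{\mu_n,m_n}=\g_{n-1}$ recorded in section \ref{subsecDefMLV}, where I write $m_n=\deg(\mu_n)$. The first thing I would check is that $\kp(\mu_n)\ne\emptyset$: since $\mu_n<\nu$, the node $\mu_n$ is not maximal, so this is immediate from Theorem \ref{maximal}. This makes Lemma \ref{units} applicable and also forces $\mu_n$ to have trivial support, so that every non-zero homogeneous element of $\gg_{\mu_n}$ of degree $\al$ is an initial term $\op{in}_{\mu_n}f$ with $\mu_n(f)=\al$.

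Next I would prove the equivalence, in both directions via Lemma \ref{units}. If $\al\in\g_{n-1}=\g_{\mu_n,m_n}$, I choose $a\in\kx$ with $\deg(a)<m_n=\deg(\mu_n)$ and $\mu_n(a)=\al$; then $u:=\op{in}_{\mu_n}a$ is a non-zero homogeneous element of degree $\al$, and it is a unit by Lemma \ref{units} applied with $f=a$ (since trivially $a\sim_{\mu_n}a$). Conversely, if $w=\op{in}_{\mu_n}f$ is a homogeneous unit of degree $\al\in\g_n$, Lemma \ref{units} gives $a\in\kx$ with $\deg(a)<m_n$ and $f\sim_{\mu_n}a$; then $\op{in}_{\mu_n}a=\op{in}_{\mu_n}f$ forces $\mu_n(a)=\al$, hence $\al=\mu_n(a)\in\g_{\mu_n,m_n}=\g_{n-1}$.

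For the description of the full set of homogeneous units of degree $\al$, I would use that $\gg_{\mu_n}$ is a graded integral domain, so the inverse of a homogeneous unit of degree $\al$ is homogeneous of degree $-\al$. Consequently the homogeneous units of degree $0$ are exactly $\Delta_n\cap\gg_{\mu_n}^*=\Delta_n^*$, and $\Delta_n^*=\ka_n^*$ by the characterization of $\ka_n$ in section \ref{subsecKP}. Fixing one homogeneous unit $u$ of degree $\al$, multiplication by $u$ carries $\ka_n^*$ into $\left(\pset_\al(\mu_n)/\pset^+_\al(\mu_n)\right)\cap\gg_{\mu_n}^*$, while conversely any homogeneous unit $w$ of degree $\al$ satisfies $wu^{-1}\in\ka_n^*$, so $w\in\ka_n^*u$; this yields the claimed equality.

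The argument is essentially a bookkeeping of earlier results; the only steps needing a word of care are the two inputs — that $\mu_n$ admits key polynomials, which is what lets me invoke Lemma \ref{units}, and that the identity $\g_{\mu_n,m_n}=\g_{n-1}$ from section \ref{subsecDefMLV} indeed applies at the node $\mu_n$ — and both are immediate from $\mu_n<\nu$ and the structure of an MLV-chain, so I do not expect a real obstacle.
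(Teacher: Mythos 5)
Your proof is correct, and it matches the paper on two of the three steps: the existence of a unit in each degree $\al\in\g_{n-1}$ (take $u=\op{in}_{\mu_n}a$ with $\deg(a)<m_n$, $\mu_n(a)=\al$, via $\g_{\mu_n,m_n}=\g_{n-1}$ and Lemma \ref{units}), and the final identification of all homogeneous units of degree $\al$ with $\ka_n^*u$ (the paper writes $\left(\pset_\al/\pset_\al^+\right)\cap\gg_{\mu_n}^*=\left(\Delta_n u\right)\cap\gg_{\mu_n}^*=\Delta_n^*u=\ka_n^*u$, which is your $wu^{-1}\in\Delta_n^*=\ka_n^*$ argument in disguise). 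Where you genuinely diverge is the converse of the existence statement. The paper proves directly that no unit exists in degree $\al\in\g_n\setminus\g_{n-1}$: it writes $\al=\ell\ga_n+\beta$ with $\ell\ne0$, $\beta\in\g_{n-1}$, disposes of $\ell<0$ via Lemma \ref{minimal0} (there $\pset_\al=0$), and for $\ell>0$ observes that $\pset_\al/\pset_\al^+=\left(u\,q_n^\ell\right)\Delta_n$ contains no unit because $q_n$ is a homogeneous prime. You instead argue the contrapositive by using the \emph{only if} half of Lemma \ref{units}: a homogeneous unit is $\op{in}_{\mu_n}f$ with $f\sim_{\mu_n}a$, $\deg(a)<m_n$, hence its degree $\mu_n(a)$ lies in $\g_{\mu_n,m_n}=\g_{n-1}$. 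Your route is shorter and avoids the decomposition of $\g_n$ altogether; the paper's route yields as a by-product the explicit shape $\left(u\,q_n^\ell\right)\Delta_n$ of the homogeneous components in degrees outside $\g_{n-1}$, which is in the spirit of the structural description of $\gg_\nu$ pursued in that section. Your preliminary remarks ($\kp(\mu_n)\ne\emptyset$ from non-maximality, trivial support, every non-zero homogeneous element being an initial term) are all sound and suffice to legitimize the use of Lemma \ref{units}.
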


\begin{proof}
As we saw in section \ref{subsecDefMLV}, $\g_{n-1}=\g_{n,m_n}$. Thus, for all
 $\alpha\in\g_{n-1}$ there is $a\in\kx$  such that $\deg(a)<m_n$ and $\mu_n(a)=\alpha$.
By Lemma \ref{units}, $\op{in}_{\mu_n}a$ is a unit of degree $\al$.

On the other hand, by (\ref{chainGamma}), any $\alpha\in\g_n\setminus\g_{n-1}$ can be written as 
$$\alpha=\ell\ga_n+\beta, \qquad \ell\in\Z,\quad\ell\ne0, \qquad\beta\in\g_{n-1}.
$$
If $\ell<0$,  Lemma \ref{minimal0} shows that $\pset_\al=0$. Suppose that $\ell>0$.

By the previous argument, there exists a unit $u\in\gg_{\mu_n}^*$ of degree $\beta$. Then, $u\,q_n^\ell$ has degree $\alpha$. Since $q_n$ is a prime element in $\gg_{\mu_n}$, there is no unit in $$\pset_\al/\pset^+_\al=\left(u\,q_n^\ell\right)\Delta_n.$$

Finally, if $u\in\gg_{\mu_n}^*$ is a homogeneous unit of degree $\al$, we have
$$
\left(\pset_\al/\pset^+_\al\right)\cap \gg_{\mu_n}^*=\left(\Delta_nu\right)\cap \gg_{\mu_n}^*=\Delta^*_nu=\ka^*_nu.
$$
This ends the proof of the lemma.
\end{proof}

\begin{theorem}\label{ggm}
(1) \ If $\nu=\mu_r$ has finite depth, then
$$
\ggn=\begin{cases}
\left(\gg_v\otimes_k\ka\right)[x_0,\dots,x_{r-1}],&\mbox{ if }\ga_r=\infty,\\
\left(\gg_v\otimes_k\ka\right)[x_0,\dots,x_{r-1}][q_r],&\mbox{ if }\ga_r<\infty.
\end{cases}
$$
If $\ga_r<\infty$, then $q_r$ is a homogeneous prime element of degree $\ga_r$ which is transcendental over $\left(\gg_v\otimes_k\ka\right)[x_0,\dots,x_{r-1}]$. 

(2) \ Suppose that $\nu$ is the stable limit of a continuous family of augmentations of $\mu_r$, of stable degree $m_r$, such that $\phi_r\not\in\Phi_{\mu_r,\nu}$.
 Then,
$$
\ggn=\left(\gg_v\otimes_k\ka\right)[x_0,\dots,x_r].
$$

(3) \ If $\nu=\lim_{n\to\infty}\mu_n$ is the stable limit of an infinite  MacLane-Vaqui\'e chain, then
$$
\ggn=\left(\gg_v\otimes_k\ka\right)[x_0,\dots,x_n,\dots]. 
$$

In all cases, $x_n$ is a homogeneous unit of degree $\ga_n$, algebraic over $$\left(\gg_v\otimes_k\ka\right)[x_0,\dots,x_{n-1}],$$ with minimal equation 
\begin{equation}\label{xizi}
x_n^{e_n}=u_nz_n\in \left(\gg_v\otimes_k\ka\right)[x_0,\dots,x_{n-1}],\qquad n\ge0.
\end{equation}
\end{theorem}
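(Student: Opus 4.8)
The plan is to follow the MacLane--Vaqui\'e chain (\ref{depthMLVGr}), describing every $\gg_{\mu_n}$ explicitly by induction on $n$ and then obtaining $\ggn$ by passing to the relevant limit. The inductive statement I would carry, for each node $\mu_n$ with $\ga_n<\infty$, is
$$
\gg_{\mu_n}=\left(\gg_v\otimes_k\ka_n\right)[x_0,\dots,x_{n-1}][q_n],\qquad q_n=\op{in}_{\mu_n}\phi_n,
$$
where $x_i$ is the image of $\phi_i$, a homogeneous unit of degree $\ga_i$ with $x_i^{e_i}=u_iz_i$ and $z_i\in\ka_n$, these relations together with those of $\ka_n/k$ exhausting all relations, while $q_n$ is transcendental over $\left(\gg_v\otimes_k\ka_n\right)[x_0,\dots,x_{n-1}]$; when $\ga_r=\infty$ the support of $\mu_r$ is $\phi_r\kx$, $q_r=0$, and $\gg_{\mu_r}=\left(\gg_v\otimes_k\ka_r\right)[x_0,\dots,x_{r-1}]$. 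Granting this, case (a) is the statement at $n=r$, with $\ka_r=\ka$ by Theorem \ref{kappa}; in cases (b) and (c) one has $\kpn=\emptyset$ (Proposition \ref{stablelimit}), so no transcendental generator survives, and $\ggn$ is generated by the $x_i$ because every homogeneous element of $\ggn$ is of the form $\inn f$, and for $n$ with $m_n>\deg f$ one has $\nu(f)=\mu_n(f)$, so $\inn f$ lies in the image of $\gg_{\mu_n}$ in $\ggn$.

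For the base case $\mu_0=\omega_{a_0,\ga_0}$, expansion in powers of $\phi_0=x-a_0$ and Lemma \ref{minimal0} give $\gg_{\mu_0}=\gg_v[q_0]$ with $\ka_0=k$; transcendence of $q_0$ over $\gg_v$ comes from Theorem \ref{comm}, since $q_0^{e_0}=u_0\xi_0$ with $u_0\in\gg_v^*$ and a homogeneous polynomial relation over $\gg_v$ decomposes — because $\ga_0$ has order $e_0$ modulo $\g$, each homogeneous component of $\gg_v$ being at most one-dimensional over $k$ — into relations for the Hauptmodul $\xi_0$ over $k$, which are trivial. For an ordinary augmentation step $\mu_n\to\mu_{n+1}=[\mu_n;\phi_{n+1},\ga_{n+1}]$: by Corollary \ref{KerImGr} the kernel of $\gg_{\mu_n}\to\gg_{\mu_{n+1}}$ is generated by $\op{in}_{\mu_n}\phi_{n+1}$, which by \cite[Thm. 5.3]{KP} (as in the proof of Lemma \ref{MinPolZi}) equals $\ep\,R_{\mu_n,\phi_n,u_n}(\phi_{n+1})(\xi_n)$ for a unit $\ep$; writing $\xi_n=q_n^{e_n}u_n^{-1}$ and quotienting turns $\Delta_n=\ka_n[\xi_n]$ into $\ka_n[\xi_n]/(R_{\mu_n,\phi_n,u_n}(\phi_{n+1})(\xi_n))=\ka_{n+1}$ (Lemma \ref{kappaOrd}) and makes $q_n$ algebraic with minimal equation (\ref{xizi}). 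A rank count, using that $R_{\mu_n,\phi_n,u_n}(\phi_{n+1})$ is irreducible of degree $[\ka_{n+1}:\ka_n]$ over $\ka_n$ (Theorem \ref{charKP}), identifies the image of $\gg_{\mu_n}$ in $\gg_{\mu_{n+1}}$ with $\left(\gg_v\otimes_k\ka_{n+1}\right)[x_0,\dots,x_n]$, whose homogeneous components turn out to be one-dimensional over $\ka_{n+1}$. Since $\phi_{n+1}$ is a key polynomial of minimal degree for $\mu_{n+1}$ (Proposition \ref{extension}), expanding $\op{in}_{\mu_{n+1}}f$ in powers of $\phi_{n+1}$ by $\mu_{n+1}$-minimality (Lemma \ref{minimal0}), and noting that the coefficients, of degree $<m_{n+1}$, have equal $\mu_n$- and $\mu_{n+1}$-values and so lie in that image, we obtain $\gg_{\mu_{n+1}}=\left(\gg_v\otimes_k\ka_{n+1}\right)[x_0,\dots,x_n][q_{n+1}]$. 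Transcendence of $q_{n+1}$ follows as in the base case: it reduces via the grading to transcendence of $\xi_{n+1}$ over $\left(\gg_v\otimes_k\ka_{n+1}\right)[x_0,\dots,x_n]$, whose degree-zero part is $\ka_{n+1}$, while $q_{n+1}^{e_{n+1}}=u_{n+1}\xi_{n+1}$ with $u_{n+1}$ a unit of that ring by Lemma \ref{intrinsice} (as $e_{n+1}\ga_{n+1}\in\g_n$).

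The limit augmentation step $\mu_n\to\mu_{n+1}=[\aa;\phi_{n+1},\ga_{n+1}]$, with $\aa=\left(\rho_i\right)_{i\in A}$ and $\rho_i=[\mu_n;\chi_i,\be_i]$ essential of stable degree $m_n$, is the delicate one. The MLV condition gives $\phi_n\not\sim_{\mu_n}\chi_i$; since $\phi_n$ and $\chi_i$ are both key polynomials of minimal degree $m_n$ for $\mu_n$, this forces $e_n=1$ by \cite[Cor. 6.5]{KP}, so $R_{\mu_n,\phi_n,u_n}(\chi_i)$ has degree one, $\ka(\rho_i)=\ka_n$, and $\ka_{n+1}=\ka_n$ (Lemma \ref{kappaOrd}). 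Fixing $i\in A$ and factoring $\mu_n\to\rho_i\to\mu_{n+1}$, the ordinary-step analysis applied to $\mu_n\to\rho_i$ gives $\gg_{\rho_i}=\left(\gg_v\otimes_k\ka_n\right)[x_0,\dots,x_{n-1}][\op{in}_{\rho_i}\chi_i]$ with $\op{in}_{\rho_i}\chi_i$ transcendental, and identifies the image of $\gg_{\mu_n}$ with $\left(\gg_v\otimes_k\ka_n\right)[x_0,\dots,x_{n-1}]$, a ring that already contains $x_n=u_nz_n$. For the remaining step $\rho_i\to\mu_{n+1}$, $\phi_{n+1}\in\kpi(\aa)$ is a key polynomial of minimal degree for $\mu_{n+1}$ (Proposition \ref{extensionlim}); in the $\phi_{n+1}$-expansion of $f\in\kx$ the coefficients have degree $<m_{n+1}$ and are hence $\aa$-stable, so their initial forms are images of $\op{in}_{\rho_i}$ of them for suitable $i$, and a secondary expansion in powers of $\chi_i$ (using $\rho_i$-minimality of $\chi_i$) pulls these back into $\gg_{\rho_i}$; passing to images in $\gg_{\mu_{n+1}}$ then gives $\gg_{\mu_{n+1}}=\left(\gg_v\otimes_k\ka_{n+1}\right)[x_0,\dots,x_n][q_{n+1}]$, with $q_{n+1}$ transcendental as before.

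With the induction in place, case (a) with $\ga_r=\infty$ or $\ga_r\in\gq$ is immediate; the incommensurable sub-case $\ga_r\notin\gq$ is identical, except that transcendence of $q_r$ now follows from the powers $q_r^\ell$ lying in distinct cosets of $\g_{n-1}$ rather than from Theorem \ref{comm}. For cases (b) and (c), Theorem \ref{kappa} identifies $\bigcup_n\ka_n$, respectively $\ka_r$, with $\ka=\ka(\nu)$ — in case (b) using also that $\ka(\rho_i)=\ka_r$, as in the proof of Lemma \ref{mainCor} — so every homogeneous element of $\ggn$ lands in some $\left(\gg_v\otimes_k\ka\right)[x_0,\dots,x_n]$; hence $\ggn$ is their union, which is $\left(\gg_v\otimes_k\ka\right)[x_0,\dots,x_r]$ in the quasi-finite case and $\left(\gg_v\otimes_k\ka\right)[x_0,\dots,x_n,\dots]$ in the infinite case, with the relations (\ref{xizi}) holding by construction and $z_n$ algebraic over $\ka_n$ by Lemma \ref{MinPolZi}. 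The hard part is the limit step: one must control the initial forms of the $\phi_{n+1}$-expansion coefficients whose degree lies strictly between $m_n$ and $m_{n+1}$, which is why one routes through an intermediate $\rho_i$ and a secondary $\chi_i$-expansion; and one must check throughout that no relation beyond (\ref{xizi}) is introduced, which rests on the one-dimensionality of the homogeneous components of $\left(\gg_v\otimes_k\ka_n\right)[x_0,\dots,x_{n-1}]$ over $\ka_n$ together with the transcendence of each Hauptmodul $\xi_n$ over $\ka_n$ (Theorem \ref{comm}).
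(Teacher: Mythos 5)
Your argument is correct in substance, but it follows a genuinely different route from the paper's. You run an induction along the chain, computing every intermediate graded algebra $\gg_{\mu_n}=\left(\gg_v\otimes_k\ka_n\right)[x_0,\dots,x_{n-1}][q_n]$, treating ordinary and limit steps separately (kernel and image via Corollary \ref{KerImGr} and \cite[Thm. 5.3]{KP}, generation via $\phi_{n+1}$-expansions and Lemma \ref{minimal0}, the limit step routed through an intermediate $\rho_i$ after observing that the MLV condition forces $e_n=1$), and only then pass to the stable limit in cases (2) and (3). The paper never computes the intermediate algebras: it works directly inside $\ggn$, noting that the candidate subalgebra contains a homogeneous unit of every degree $\al\in\g_{r-1}$ (monomials in the $x_i$ times elements of $\gg_v$), hence by Lemma \ref{intrinsice} \emph{all} such units, in particular $u_r$ and $z_r$, so that $\Delta=\ka[\xi_r]$ lies in it; then for each $\al\in\gn$ with nonzero component it writes $\al=\ell\ga_r+\be$ with $\ell\ge0$, $\be\in\g_{r-1}$ (Lemma \ref{minimal0}) and gets $\pset_\al/\pset_\al^+=\Delta\,\zeta$ for the explicit element $\zeta=q_r^\ell u$ of the subalgebra, cases (2) and (3) being even easier since there $\Delta=\ka$ and every nonzero homogeneous element is a unit. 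Your route is longer but proves more (explicit descriptions of all the $\gg_{\mu_n}$ and of the image of each one in the next), none of which the theorem requires; the paper's componentwise argument buys brevity at the cost of saying nothing about the intermediate algebras. Two points in your write-up should be made explicit: at a limit step, the reason the image in $\gg_{\mu_{n+1}}$ of the extra generator $\op{in}_{\rho_i}\chi_i$ of $\gg_{\rho_i}$ lands in $\left(\gg_v\otimes_k\ka_{n+1}\right)[x_0,\dots,x_n]$ is the coset argument $\Delta^*=\ka^*$ (two homogeneous units of the same degree $\be_i\in\g_{n-1}$ differ by a factor in $\ka_{n+1}^*$), not the secondary $\chi_i$-expansion by itself; and your ``no relations beyond (\ref{xizi})'' claims rest on the one-dimensionality of the homogeneous components of the subalgebra over $\ka_n$, which needs the same coset argument — though for the theorem as stated only generation, the minimal equation (\ref{xizi}), and the primality and transcendence of $q_r$ (from Theorem \ref{comm} in the commensurable case, and from the distinctness of the cosets $\ell\ga_r+\g_{r-1}$ in the incommensurable case, as you note) are actually needed.
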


\begin{proof}
For $n>0$, since $\g_{n-1}=\gen{\g,\ga_0,\dots,\ga_{n-1}}$ and $x_0,\dots,x_{n-1}$ are units of degree $\ga_0,\dots,\ga_{n-1}$ respectively, the subalgebra $$\left(\gg_v\otimes_k\ka\right)[x_0,\dots,x_{n-1}]\subset\ggn$$ contains a homogeneous unit $u_\al$ of degree $\al$, for all $\al\in\g_{n-1}$.
By Lemma \ref{intrinsice}, this algebra contains all homogeneous units of degree $\al$, for all $\al\in\g_{n-1}$. 
Thus, it contains $u_n$ and $z_n$.

If $\nu=\mu_r$ has finite depth and  $\ga_r=\infty$, we have $\gn=\g_{r-1}$ and $\Delta=\ka$. For all $\al\in\gn$,
$$
\pset_\al(\nu)/\pset^+_\al(\nu)=\Delta\,u_\al=\ka\,u_\al\subset \left(\gg_v\otimes_k\ka\right)[x_0,\dots,x_{r-1}].
$$
Hence, $\left(\gg_v\otimes_k\ka\right)[x_0,\dots,x_{r-1}]=\ggn$.

If $\nu=\mu_r$ has finite depth and $\ga_r<\infty$,  consider the subalgebra $$\gg=\left(\gg_v\otimes_k\ka\right)[x_0,\dots,x_{r-1}][q_r]\subset\ggn.$$

If $\nu/v$ is incommensurable, then $\Delta=\ka$ by Theorem \ref{incomm}. 
If $\nu/v$ is commensurable, the algebra $\gg$ contains $u_r$  by the arguments above. Hence, it contains $\xi_r=q_r^{e_r}u_r^{-1}$. Therefore, in all cases 
$\Delta\subset\gg\subset \ggn$.

Now, let $\al\in\g_r=\gn$ such that $\pset_\al/\pset^+_\al\ne0$. If  $\inn f$ is a non-zero homogeneous element of degree $\al$, Lemma \ref{minimal0} shows that
$$\al=\nu(f)=\ell \ga_r+\be, \qquad\ell\in\Z_{\ge0},\qquad\be\in\g_{r-1}.$$ If $u$ is an homogeneous unit in $\left(\gg_v\otimes_k\ka\right)[x_0,\dots,x_{r-1}]$ of degree $\be$, the element $\zeta=q_r^\ell u$ belongs to $\gg$ and has degree $\al$.

Hence, $\pset_\al/\pset^+_\al=\Delta \zeta$ is contained in $\gg$. This proves that $\gg=\ggn$.

This proves (1). The other two cases follow from similar arguments. Actually, the proof is easier in both cases, because  $\Delta=\ka$ and all non-zero homogeneous elements are units.
\end{proof}

\begin{corollary}\label{algclosure}
If $\nu$ has finite depth and trivial support, the subalgebra $$\gg_0=\left(\gg_v\otimes_k\ka\right)[x_0,\dots,x_{r-1}]$$ is the algebraic closure of $\ggv$ in $\ggn=\gg_0[q_r]$, which is purely transcendental over $\gg_0$.

In all other cases, the extension $\ggv\subset\ggn$ is algebraic.  
\end{corollary}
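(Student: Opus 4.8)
The plan is to read everything off Theorems \ref{kappa} and \ref{ggm}, distinguishing the ``finite depth, trivial support'' case — which by Lemmas \ref{mainCor} and \ref{empty2} is precisely the case $\nu=\mu_r$ of finite depth with $\ga_r<\infty$, i.e.\ part (1) of Theorem \ref{ggm} with the extra generator $q_r$ — from all the others. Two facts will be used throughout. First, $\ka/k$ is algebraic: it is finite in the finite and quasi-finite depth cases (where $\ka=k(z_0,\dots,z_{r-1})$) and a union of the finite steps $\ka_n/k$ in the infinite case, by Theorem \ref{kappa} together with Lemmas \ref{MinPolZi} and \ref{kappaOrd}; hence $\gg_v\otimes_k\ka$ is an integral, in particular algebraic, extension of $\ggv$, because each element of $\ka$ is killed by a monic polynomial with coefficients in $k\subset\ggv$. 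Second, by Theorem \ref{ggm} every generator $x_n$ satisfies the monic equation $x_n^{e_n}=u_nz_n$ over the preceding subalgebra $\left(\gg_v\otimes_k\ka\right)[x_0,\dots,x_{n-1}]$ (see (\ref{xizi})), so $x_n$ is integral over that subalgebra.

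For every case \emph{other than} finite depth with trivial support, Theorem \ref{ggm} — part (1) with $\ga_r=\infty$, part (2), or part (3) — presents the whole of $\ggn$ as $\gg_v\otimes_k\ka$ with finitely many, or countably many, generators $x_n$ adjoined, each integral over the algebra generated by $\ggv$, $\ka$ and the earlier ones. By induction on the index (and, in the infinite case, by noting that any single element of $\ggn$ already lies in a subalgebra generated by $\gg_v\otimes_k\ka$ and finitely many of the $x_n$), $\ggn$ is integral over $\ggv$, so $\ggv\subset\ggn$ is algebraic. This disposes of those cases.

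It remains to treat $\nu$ of finite depth $r$ with trivial support, so $\ga_r<\infty$ and, by Theorem \ref{ggm}(1), $\ggn=\gg_0[q_r]$ with $q_r$ homogeneous and transcendental over $\gg_0=\left(\gg_v\otimes_k\ka\right)[x_0,\dots,x_{r-1}]$; in particular $\ggn$ is a univariate polynomial ring over $\gg_0$, which is the asserted purely transcendental extension. By the two facts of the first paragraph, $\gg_0$ is integral over $\ggv$, hence contained in the algebraic closure of $\ggv$ in $\ggn$. For the reverse inclusion let $f\in\ggn$ be algebraic over $\ggv$; since $\ggv\subseteq\gg_0$, $f$ is algebraic over $\gg_0$, and a domain is algebraically closed in its polynomial ring: if $f$ had positive $q_r$-degree $d$ and satisfied a nonzero polynomial relation of degree $n\ge1$ over $\gg_0$, comparison of the $q_r$-leading coefficients would force a product of two nonzero elements of the domain $\gg_0$ to vanish. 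Hence $f\in\gg_0$, and $\gg_0$ is exactly the algebraic closure of $\ggv$ in $\ggn$.

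No step is genuinely hard; the corollary is essentially a repackaging of Theorems \ref{kappa} and \ref{ggm}. The one load-bearing external input is the transcendence of $q_r$ over $\gg_0$ supplied by Theorem \ref{ggm}(1) — without it the claim that $\gg_0$ is algebraically closed in $\ggn$ would be false — and the only point demanding a little care is matching each case of Theorem \ref{main} with the correct case of Theorem \ref{ggm} via Lemmas \ref{mainCor} and \ref{empty2}.
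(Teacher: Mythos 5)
Your proof is correct and follows exactly the route the paper intends: the corollary is stated without a separate proof because it is read off Theorems \ref{kappa} and \ref{ggm}, namely that $\ka/k$ is algebraic and each $x_n$ is integral via the relation (\ref{xizi}), while in the finite-depth, trivial-support case the transcendence of $q_r$ over $\gg_0$ makes $\ggn=\gg_0[q_r]$ a polynomial ring in which $\gg_0$ is algebraically closed. Your filling-in of the routine details (case matching via Lemmas \ref{empty2} and \ref{mainCor}, the leading-coefficient argument in the domain $\gg_0$) is accurate.
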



We may reinterpretate Theorem \ref{ggm} as an intrinsic construction of $\ggn$, depending only on the data $e_n,\ga_n$ for all $n\ge0$. 

For instance, suppose  that $\nu$ has finite depth $r$. We consider indeterminates $x_0,\dots,x_{r-1}$ of degrees $\ga_0,\dots,\ga_{r-1}$, submitted to the relations (\ref{xizi}).

For $n<r-1$, once the subalgebra $\left(\gg_v\otimes_k\ka\right)[x_0,\dots,x_{n-1}]$ has been constructed, we may choose an arbitrary homogeneous unit $u_n$ of degree $e_n\ga_n\in\g_{n-1}$ in this subalgebra, and this facilitates the construction of $\left(\gg_v\otimes_k\ka\right)[x_0,\dots,x_n]$ .

If $\ga_r=\infty$ we get $\ggn=\left(\gg_v\otimes_k\ka\right)[x_0,\dots,x_{r-1}]$.

If $\ga_r<\infty$, we consider an indeterminate $q_r$ of degree $\ga_r$ and  we take    $$\ggn=\left(\gg_v\otimes_k\ka\right)[x_0,\dots,x_{r-1}][q_r].$$  


\section{Defect of the extension of a valuation}\label{secDef}

\subsection{Numerical data attached to a valuation on $\kx$}
Let $\nu$ be a  valuation on $\kx$ with underlying MacLane-Vaqui\'e chain as in (\ref{depthMLVGr}).

Keeping with all notation introduced in section \ref{secGmu}, we may attach to each node $\mu_n<\mu_{n+1}\le\nu$ of the chain  the following numerical data.

The  \emph{relative ramification index} and \emph{residual degree}, defined as the positive integers
$$
e_n=\left(\g_n\colon\g_{n-1}\right),\qquad f_n=[\ka_{n+1}\colon\ka_n]=\deg\left(\Phi_{\mu_n,\nu}\right)/e_nm_n,
$$
respectively. Also, the  \emph{relative gap} is defined as the rational number
$$
d_n=\dfrac{m_{n+1}}{\deg\left(\Phi_{\mu_n,\nu}\right)}=
\begin{cases}
1,&\mbox{ if }\mu_{n}\to\mu_{n+1}\mbox{ is an ordinary augmentation},\\
m_{n+1}/m_n,&\mbox{ if }\mu_{n}\to\mu_{n+1}\mbox{ is a limit augmentation}.
\end{cases}
$$


By Theorem \ref{unicity} and Corollary \ref{uniquekappa}, these numbers are intrinsic data of $\nu$. Clearly,
\begin{equation}\label{degdeg2}
m_{n+1}=e_nf_nd_nm_n\quad\mbox{for all}\quad n\ge0.
\end{equation}

For all $n\ge0$ we may consider the valuation 
$$
v_n:=[\mu_n;\phi_n,\infty],
$$
with support $\p_{v_n}=\phi_n\kx$. This valuation determines an extension of $v$ to the finite field extension $K_{\phi_n}/K$, where $K_{\phi_n}=\kx/\left(\phi_n\kx\right)$. We abuse of language and use the same symbol $v_n$ for both valuations. That is,
$$
v_n\colon \kx\twoheadrightarrow K_{\phi_n}\stackrel{v_n}\lra \g_{\mu_n}\infty, 
$$

It is well known how to compute the value group and residue class field of this valuation in terms of data of the pair $\mu_n,\phi_n$ \cite[Props. 2.12+3.6]{KP}: 
\begin{equation}\label{vi}
\g_{v_n}=\g_{\mu_n,m_n}=\g_{n-1},\qquad  k_{v_n}\simeq \ka_n. 
\end{equation}
Let us define $d(\mu_n/v):=m_n/e(v_n/v)f(v_n/v)$, where $e(v_n/v)$, $f(v_n/v)$ are the ramificaction index and residual degree of the extension $v_n/v$, respectively. Then, we have:
\begin{equation}\label{CompDefect}
e(v_n/v)=e_0\cdots e_{n-1},\qquad  f(v_n/v)=f_0\cdots f_{n-1},\qquad 
d(\mu_n/v)=d_0\cdots d_{n-1}. 
\end{equation}

In fact, the formulas for $e(v_n/v)$ and $f(v_n/v)$ follow immediately from  (\ref{vi}) and the definition of the numbers $e_n$, $f_n$. The formula for $d(\mu_n/v)$ follows from  (\ref{degdeg2}).

\subsection{Defect of a valuation on a simple finite extension of fields}

Let $L/K$ be a finite simple field extension. That is, $L=K(\t)$, where $\t$ is the root of some monic irreducible $F\in \kx$ in some fixed algebraic closure of $K$.

Let $\nu$ be a valuation on $L$ extending $v$. We abuse of language and use the same symbol to denote the valuation on $\kx$ determined by $\nu$.


Since $\nu$ has non-trivial support $\p=F\kx$, Lemma \ref{mainCor} shows that $\nu$ has finite depth and admits a finite MacLane-Vaqui\'e chain
$$
\mu_0\ \stackrel{\phi_1,\ga_1}\lra\  \mu_1\ \stackrel{\phi_2,\ga_2}\lra\ \cdots
\ \lra\ \mu_{r-1} 
\ \stackrel{\phi_{r},\ga_{r}}\lra\ \mu_{r}=\nu 
$$
with $\ga_r=\infty$ and $\phi_r=F$.
By equations (\ref{degdeg2}) and (\ref{CompDefect}), we have
\begin{equation}\label{Def} 
[L\colon K]=\deg(\phi_r)=e(\nu/v)f(\nu/v)d(\nu/v).
\end{equation}

If $\eta_1,\dots,\eta_g$ are the different extensions of $v$ to $L$, the well known inequality
$$
\sum_{i=1}^g e(\eta_i/v)f(\eta_i/v)\le [L\colon K]
$$
gives a hint about the information contained in the rational number $d(\nu/v)$.

In particular, if $v$ admits a unique extension $\nu$  to $L$, then the equality in (\ref{Def}) shows that $d(\nu/v)$ coincides with the \emph{defect} of the extension $\nu/v$. 

In this case, $d(\nu/v)=p^a$ is an integer, where $p=1$ if $k$ has characteristic zero and $p$ is the characteristic of $k$ if it is positive.

Hence, the computation of (\ref{CompDefect}) yields the following result of Vaqui\'e in \cite{VaqDef}.

\begin{corollary}\label{mainDef}
If $\nu$ is the only extension of $v$ to $L$, the defect of the extension $\nu/v$ is the product of the relative gaps of any MacLane-Vaqui\'e chain of $\nu$.
\end{corollary}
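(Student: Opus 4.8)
The plan is to read the statement off from the numerical identities in equations~(\ref{degdeg2}) and~(\ref{CompDefect}), combined with the degeneration of the fundamental inequality under the uniqueness hypothesis. First I would record that, since $\nu$ has non-trivial support $\p=F\kx$, Lemma~\ref{mainCor} shows $\nu$ has finite depth and admits a finite MacLane-Vaqui\'e chain $\mu_0\to\cdots\to\mu_r=\nu$ with $\ga_r=\infty$ and $\phi_r=F$; in particular $m_r=\deg(\phi_r)=[L:K]$. Since $\phi_r=F$ generates the support of $\mu_r=\nu$, the valuation $v_r$ attached to the final node is simply $\nu$ itself, regarded as an extension of $v$ to $L=\kx/(F\kx)$, so that $e(v_r/v)=e(\nu/v)$ and $f(v_r/v)=f(\nu/v)$. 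By the definition of $d(\mu_r/v)$ this yields
$$
d(\nu/v)=d(\mu_r/v)=\frac{m_r}{e(\nu/v)f(\nu/v)}=\frac{[L:K]}{e(\nu/v)f(\nu/v)}.
$$

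Next I would invoke the hypothesis that $\nu$ is the only extension of $v$ to $L$. The well-known inequality $\sum_i e(\eta_i/v)f(\eta_i/v)\le[L:K]$ over the extensions $\eta_i$ of $v$ to $L$ then reduces to $e(\nu/v)f(\nu/v)\le[L:K]$, and by definition the ratio on the right-hand side of the display above is the defect of the extension $\nu/v$; this is the interpretation of~(\ref{Def}) recalled in the discussion preceding the statement, and is the result of Vaqui\'e proved in~\cite{VaqDef}. Hence the defect of $\nu/v$ equals $d(\mu_r/v)$.

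Finally, equation~(\ref{CompDefect}) evaluates $d(\mu_r/v)=d_0d_1\cdots d_{r-1}$, the product of the relative gaps of the chain, while Theorem~\ref{unicity} and Corollary~\ref{uniquekappa} ensure this product is independent of the chosen MacLane-Vaqui\'e chain of $\nu$. Assembling these three steps proves the corollary. The only point that is not purely formal is the identification of the ratio $[L:K]/\big(e(\nu/v)f(\nu/v)\big)$ with the classical defect --- equivalently, Ostrowski's lemma that this ratio is an integral power of the residue characteristic --- but this has already been recalled in the text immediately above the statement, so what remains is a direct combination of the cited formulas.
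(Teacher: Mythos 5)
Your proposal is correct and follows essentially the same route as the paper: existence of a finite MLV-chain with $\ga_r=\infty$, $\phi_r=F$ via Lemma \ref{mainCor}, the identity $[L\colon K]=e(\nu/v)f(\nu/v)d(\nu/v)$ from (\ref{degdeg2})--(\ref{CompDefect}), identification of $d(\nu/v)$ with the defect under the uniqueness hypothesis, and independence of the product $d_0\cdots d_{r-1}$ from the chosen chain via Theorem \ref{unicity} and Corollary \ref{uniquekappa}. No substantive differences from the paper's argument.
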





\end{document}